\newtheorem{thm}{Theorem}[section]
\newtheorem{cor}[thm]{Corollary}
\newtheorem{prop}[thm]{Proposition}
\newtheorem*{prob*}{Problem}
\newtheorem*{thm*}{Theorem}
\theoremstyle{definition}
\newtheorem{defn}[thm]{Definition}
\newtheorem*{defn*}{Definition}
\newtheorem{rem}[thm]{Remark}
\newtheorem*{rem*}{Remark}
\numberwithin{equation}{section}
\DeclareMathOperator{\Conf}{Conf}
\DeclareMathOperator{\Prob}{Prob}
\DeclareMathOperator{\R}{\mathbb{R}}
\DeclareMathOperator{\M}{\mathcal{M}}
\DeclareMathOperator{\Y}{\mathbb{Y}}
\DeclareMathOperator{\PB}{\mathbb{P}}
\DeclareMathOperator{\DIM}{DIM}
\DeclareMathOperator{\Ewens}{Ewens}
\DeclareMathOperator{\Kingman}{Kingman}
\DeclareMathOperator{\Sym}{Sym}
\newcommand{\tLambda}{\widetilde{\Lambda}}
\begin{document}
\title[Multiple Partition Structures]
{\bf{Multiple Partition Structures and Harmonic Functions on Branching Graphs}}

\author{Eugene Strahov}
\address{Department of Mathematics, The Hebrew University of Jerusalem, Givat Ram, Jerusalem 91904, Israel}
\email{strahov@math.huji.ac.il}
\keywords{Partition structures, harmonic functions on branching graphs, symmetric functions, wreath products, probability measures on finite groups, the Ewens sampling formula, the Poisson-Dirichlet distribution and its generalizations.\\
This work was supported by the BSF grant 2018248 ``Products of random matrices via the theory of symmetric functions''. }

\commby{}
\begin{abstract}
We introduce and study multiple partition structures which are sequences of probability measures on families of Young diagrams subjected to a  consistency condition. The multiple partition structures
are generalizations of Kingman's partition structures, and are motivated by a problem of population genetics.
They are related to harmonic functions and coherent systems of probability measures on a  certain branching graph.
The vertices of this graph are multiple Young diagrams (or multiple partitions),  and the edges depend on the Jack parameter.
If the value of the Jack parameter is equal to one the branching graph under considerations reflects the branching rule for the irreducible representations of the wreath product of a finite group with the symmetric group. If the value of the Jack parameter is zero then the coherent systems of probability measures are precisely the multiple partition structures.
We establish a bijective correspondence between the set of harmonic functions on the graph and probability measures on the generalized Thoma set. The correspondence is determined by a canonical integral representation
of harmonic functions. As  a consequence we obtain a representation theorem for multiple partition structures.

We give an example of a multiple partition structure which is expected to be relevant for
a model  of population genetics for the genetic variation of a sample of gametes from a large population. Namely, we construct a probability measure on the wreath product of a finite group with the symmetric group.
If the finite group contains the identity element only then it
coincides with the well-known Ewens probability measure on the symmetric group.
The constructed probability measure defines a multiple partition structure which is a generalization of the Ewens partition structure studied by Kingman. We show that this multiple partition structure
can be represented in terms of a multiple analogue of the Poisson-Dirichlet distribution  called
the multiple Poisson-Dirichlet distribution in the paper.

\end{abstract}

\maketitle
\section{Introduction}
In this paper we study some sequences of
probability measures on combinatorial objects and related harmonic functions on branching graphs. Interest in constructions of this kind is due to
a number of reasons. For example, such sequences of probability measures and harmonic functions are important in representation theory of
big groups, the simplest of which is the infinite symmetric group formed by
finite permutations of the set $\left\{1,2,\ldots\right\}$.
In addition, the works
devoted to probabilistic models in population  genetics led to
sequences of probability measures on Young diagrams (or partitions) related by
natural consistency conditions. These consistency conditions are dictated by the experimental situation.

Namely, the modern representation theory of big groups starts from the  work by Thoma \cite{Thoma}.
Thoma's work  used complex-analytic tools to describe  extreme characters of the infinite symmetric group (Thoma's theorem).
The Thoma theorem and its extensions attracted  attention of many researchers. In particular,
different proofs of the Thoma theorem can be found in the works by
Vershik and Kerov \cite{VershikKerov1}, Okounkov \cite{OkounkovThomaTheorem},
Kerov, Okounkov, and Olshanski \cite{KerovOkounkovOlshanski},
Bufetov and Gorin \cite{BufetovGorin}, Vershik and Tsilevich \cite{VershikTsilevich}. The works by  Hora,  Hirai, and Hirai
\cite{HoraHiraiHiraiII}, Hirai, Hirai, and Hora \cite{HiraiHiraiHoraI},
Hora and Hirai \cite{HoraHirai} describe the characters of the
infinite analogues of the wreath products of a compact group with the symmetric group.
The  papers by  Gorin, Kerov, and  Vershik \cite{GorinKerovVershik}, Cuenca and Olshanski \cite{CuencaOlshanski1}
are devoted to characters and  representations of the group of infinite matrices over a finite field. The works mentioned above (and  other works on the representation theory of big groups and related subjects) reveal that many problems related to characters
of the infinite symmetric group and its analogues can be formulated in pure analytic terms: as the \textit{description of harmonic functions on branching graphs}. This observation
led to the development of the theory of such harmonic functions, see Kerov \cite{KerovCombinatorialExamples}, Borodin and Olshanski \cite{BorodinOlshanskiHarmonicFunctions},  Kerov, Okounkov, and Olshanski \cite{KerovOkounkovOlshanski}, Hora and Hirai \cite{HoraHirai},  the books by Kerov \cite{KerovDissertation} and Borodin and Olshansi \cite{BorodinOlshanskiBook}, and references therein. One of the main purposes  of this theory is to establish
a bijective correspondence between the set of harmonic functions defined on the edges of a given branching graph, and the set of probability measures on its boundary.
The correspondence is obtained via Poisson-like  integral representations of harmonic functions under considerations where the integrals are defined using  probability measures on the boundaries of  branching graphs. The integral representations
of harmonic functions are interpreted as those for general characters, and the results like the Thoma theorem follow  as corollaries.

Apart from its connection with representation theory, interest in harmonic
functions on branching graphs is due to remarkable works by Kingman \cite{Kingman1,Kingman2} in the field of population genetics. In his works Kingman
introduced  partition structures, i.e. sequences of probability measures on partitions subjected to  consistency conditions, and derived an integral representation
for such sequences. It was later noticed (see Kerov \cite{KerovDissertation}, Chapter 1) that
Kingman's partition structures can be defined in terms of harmonic functions on
a certain branching graph called in the modern literature the Kingman graph, see Kerov \cite{KerovDissertation}, Borodin and Olshanski \cite{BorodinOlshanskiHarmonicFunctions}, Kerov, Okounkov, and Olshanski \cite{KerovOkounkovOlshanski}.
Remarkably, Kingman's graph is nothing else but the Jack graph with the Jack parameter equal to zero (in other words, the Kingman graph can be obtained  from the Young graph by adding
the Jack multiplicities to the edges, and by setting the Jack parameter equal to zero). Harmonic functions on the Jack graph were investigated in Kerov, Okounkov, and Olshanski \cite{KerovOkounkovOlshanski}. In particular, the Kingman representation theorem for partition structures from Ref. \cite{Kingman2} follows from a
much more general Theorem B in Ref. \cite{KerovOkounkovOlshanski}.

In this paper we consider sequences of probability measures defined on \textit{families
of Young diagrams (or multiple partitions)}, and subjected to a certain consistency condition. Throughout this work we will refer to such sequences  as
\textit{multiple partition structures}. In the simplest case the multiple partition structures under considerations are reduced to  the Kingman partition structures of Ref. \cite{Kingman1, Kingman2}. Similar to the Kingman partition structures, the multiple partition structures introduced and studied in this paper are motivated by models of population genetics. We show that the multiple partition structures
can be understood in terms of harmonic functions on branching graphs. This leads us
to investigate harmonic functions on a  branching graph $\Gamma_{\theta}(G)$ defined as follows. Let $G$ be a finite group, and consider the graph reflecting the branching rule for irreducible representations of the wreath products $G\sim S(n)$ of $G$ with the symmetric groups $S(n)$.
To each edge of this graph we assign multiplicities depending on the Jack parameter $\theta\geq 0$. If $G$ has only the identity element,  the graph $\Gamma_{\theta}(G)$ turns into the Young graph with the Jack multiplicities considered in Kerov, Okounkov, and Olshanski \cite{KerovOkounkovOlshanski}.
In the case $\theta=0$ the graph $\Gamma_{\theta}\left(G\right)$  is a generalization of the Kingman branching graph.
It is in this case that the relation with multiple partition structures arises.

Our first result is Theorem \ref{MAINTHEOREM} which establishes a bijective correspondence between the set of
normalized nonnegative harmonic functions on $\Gamma_{\theta}(G)$ and probability measures on the generalized
Thoma set $\Omega(G)$. This correspondence is determined by a canonical integral representation of harmonic functions.
For particular choices of the finite group $G$ and the Jack parameter $\theta$ our Theorem \ref{MAINTHEOREM}
specializes to known results. If $G$ contains only the identity element, the Theorem \ref{MAINTHEOREM} is reduced to Theorem B in Kerov, Okounkov, and Olshanski \cite{KerovOkounkovOlshanski} describing the harmonic functions on the Young graph
with Jack edge multiplicities. If $G$ is an arbitrary finite group, and the Jack parameter $\theta$ is equal to one,
Theorem \ref{MAINTHEOREM} can be understood as an analogue of the Thoma theorem describing the characters
of the wreath product of $G$ with the infinite symmetric group. In this case the result was obtained in Hora and Hirai \cite{HoraHirai} by a different method, see Theorem 3.1 of Ref. \cite{HoraHirai}.

Theorem \ref{THEOREMIBIJECTIVEMULTIPLEPARTITIONSTRUCTURES} of our paper is a representation theorem for multiple
partition structures. It describes a bijective correspondence between such structures and probability measures on a certain
subset of $\Omega(G)$. Theorem \ref{THEOREMIBIJECTIVEMULTIPLEPARTITIONSTRUCTURES} can be understood as a generalization of the well-known Kingman representation theorem (see Kingman
\cite{Kingman1, Kingman2}).

In the present paper
we construct a specific example of a multiple partition structure starting from a probability measure on the wreath product $G\sim S(n)$. This probability measure is a generalization of the well-known Ewens probability measure on the symmetric group.
We show that the constructed multiple partition structure can be represented in terms of a generalization of the Poisson-Dirichlet distribution, see Theorem \ref{THEOREMREPRESENTATIONWITHMULTIPLEPDD}. This new distribution (called the multiple
Poisson-Dirichlet distribution) is explicitly described. In particular, Section
\ref{SectionExampleOfMPS} provides formulae for correlation functions of the point process associated with the multiple Poisson-Dirichlet distribution.

The paper is organized as follows. In Section \ref{SectionIntroductionMultiplePartitionStructures} we introduce multiple partition structures, discuss them in different contexts, indicate their relevance for models of population genetics, and formulate
Theorem  \ref{THEOREMIBIJECTIVEMULTIPLEPARTITIONSTRUCTURES}.
Section \ref{SectionThebranchingGraphGamma} is devoted to harmonic functions on the branching graph $\Gamma_{\theta}\left(G\right)$.
A specific example of multiple partition structures is constructed and investigated in Section \ref{SectionExampleOfMPS}.  The subsequent sections are devoted to proofs of the results stated in Sections \ref{SectionIntroductionMultiplePartitionStructures}-\ref{SectionExampleOfMPS}.\\
\textbf{Acknowledgements.} I thank Alexei Borodin for discussions, and Ofer
Zeitouni for bringing to my attention the paper by Tavar$\acute{\mbox{e}}$ \cite{Tavare}.

\section{Multiple partition structures}\label{SectionIntroductionMultiplePartitionStructures}
Multiple partition structures introduced and studied in this paper
are  sequences $\left(\mathcal{M}^{(k)}_n\right)_{n=1}^{\infty}$
of probability  measures on families  of Young diagrams
(multiple partitions) such that for each $n$, $\mathcal{M}_n^{(k)}$ and $\mathcal{M}_{n+1}^{(k)}$
are subjected to a certain consistency condition.
Multiple partition structures can be understood as  generalizations of the Kingman partition structures, see Kingman \cite{Kingman1}. To define these sequences  we start with multiple partitions.

We use Macdonald \cite{Macdonald} as a basic reference for terminology related to partitions,  Young diagrams, symmetric functions, representation theory of the symmetric groups and wreath
products. In particular, $|\lambda|$ denotes the number of boxes in the Young diagram $\lambda$,
$l(\lambda)$ is the number of rows in $\lambda$,
the symbol $\Y_n$ denotes the set of all Young diagrams with $n$ boxes,
and $\Y$ is the set of all Young diagrams.

Fix a positive integer $k$, and let $\lambda^{(1)}$, $\ldots$, $\lambda^{(k)}$ be Young diagrams
such that the condition
$$
\left|\lambda^{(1)}\right|+\ldots+\left|\lambda^{(k)}\right|=n
$$
is satisfied. Throughout this paper the family $\Lambda_n^{(k)}=\left(\lambda^{(1)},\ldots,\lambda^{(k)}\right)$ is called a \textit{multiple partition} of $n$ with $k$ components, or simply a multiple partition. Such objects arise in the representation theory of wreath products of finite groups with symmetric groups, see, for example,
Macdonald \cite{Macdonald}, Appendix B, page 169. In particular, let $G$ be a finite group with $k$
conjugacy classes and $k$ irreducible complex characters. It is known that both the conjugacy classes of the wreath product $G\sim S(n)$ and the irreducible characters of this group are parameterized by multiple partitions of $n$ with $k$ components, i.e. by families $\Lambda_n^{(k)}$.

In applications it is convenient to identify multiple partitions with configuration of balls partitioned into boxes of different types.  Namely, suppose that a sample of $n$ identical balls is partitioned into boxes of $k$ different types.
Denote by $A_i^{(l)}$ the number of boxes of type $l$
containing precisely $i$ balls, where  $l\in\{1,\ldots,k\}$ and $i\in\{1,\ldots,n\}$.
Each list $\left(A_1^{(l)},\ldots,A_n^{(l)}\right)$ can be identified with a Young diagram
$\lambda^{(l)}$ according to the rule
$$
A_i^{(l)}=\sharp\;\mbox{of rows of size}\;i\;\;\mbox{in}\;\; \lambda^{(l)}.
$$
We write
\begin{equation}\label{SvyazDiagramaA}
\lambda^{(l)}=\left(1^{A_1^{(l)}}2^{A_2^{(l)}}\ldots n^{A_n^{(l)}}\right),\;\; 1\leq l\leq k,
\end{equation}
and form the family $\Lambda_n^{(k)}=\left(\lambda^{(1)},\ldots,\lambda^{(k)}\right)$.
It is easy to check that $|\lambda^{(1)}|+\ldots+|\lambda^{(k)}|=n$, i.e. $\Lambda_n^{(k)}$
is a multiple partition of $n$ into $k$ components. Conversely, let $\Lambda_n^{(k)}=\left(\lambda^{(1)},\ldots,\lambda^{(k)}\right)$
be a multiple partition.  Given $\lambda^{(l)}$ define $A_1^{(l)}$, $A_2^{(l)}$, $\ldots$, $A_n^{(l)}$
by formula (\ref{SvyazDiagramaA}) which means that exactly $A_i^{(l)}$ of the rows of $\lambda^{(l)}$
are equal to $i$. Then refer to $A_i^{(l)}$ as to the number of those boxes of type $l$ that
contain precisely $i$ balls. Thus each $\Lambda_n^{(k)}$ corresponds to a configuration of $n$ balls partitioned into boxes
of $k$ different types and vice versa.

A \textit{random multiple partition} of $n$ with $k$ components is a   random variable
$\Lambda_n^{(k)}$ with values in the set
$\Y_n^{(k)}=\left\{\left(\lambda^{(1)},\ldots,\lambda^{(k)}\right):\;
|\lambda^{(1)}|+\ldots+|\lambda^{(k)}|=n\right\}.$
\begin{defn}\label{DEFINITIONMULTIPLEPARTITIONSTRUCTURE}
A multiple partition structure is a sequence $\mathcal{M}_1^{(k)}$, $\mathcal{M}_2^{(k)}$, $\ldots$ of distributions for $\Lambda_1^{(k)}$, $\Lambda_2^{(k)}$, $\ldots$ which is consistent in the following sense: if $n$ balls are partitioned into boxes of $k$ different types such that their configuration is $\Lambda_n^{(k)}$, and a ball is deleted uniformly at random, independently of $\Lambda_n^{(k)}$, then the multiple partition $\Lambda_{n-1}^{(k)}$ describing the configuration of the remaining balls is distributed according to $\mathcal{M}^{(k)}_{n-1}$.
\end{defn}

If $k=1$ then a multiple partition structure is a partition structure in the sense of Kingman \cite{Kingman1}.
In the context of population genetics,  Kingman considers a sample of $n$ representatives from a population, and studies the probability that there are $A_1^{(1)}$ alleles (versions of a specific gene appeared due to mutations)
represented once in the sample, $A_2^{(1)}$ alleles represented twice, and so on.

For an arbitrary $k$, a model of population genetics leading to investigation of multiple partition structures
can be formulated as well. A gamete is a reproductive cell of an animal or  plant.  Assume that $n$ gametes in a sample from a very large population are classified according to the $k$ particular genes. Namely, the experiment is such that each individual from the sample
is described by precisely one element from the set $S$,
$$
S=S_1\cup\ldots\cup S_{k},\;\; S_j\cup S_l=\emptyset,\;\;  1\leq j\neq l\leq k,
$$
where $S_1$ is the set of all possible alleles of the first gene, $\ldots$, $S_k$ is the set of all possible
alleles of the $k$-th gene.  The model is of ``infinite alleles type'' so each $S_l$ contains an infinite number of elements. Denote by $A_j^{(l)}$  (where $1\leq j\leq n$ and $1\leq l\leq k$) the number of alleles of the $l$-th gene represented $j$ times in the sample. Then we have
$$
\sum\limits_{l=1}^k\sum\limits_{j=1}^njA_j^{(l)}=n,
$$
and each  set $\left\{A_j^{(l)}\right\}_{\underset{l=1,\ldots,k}{j=1,\ldots,n}}$ can be identified with a multiple partition $\Lambda_n^{(k)}$ of $n$ into $k$ components. A model for the allelic partition is then a probability distribution $\mathcal{M}_n^{(k)}$
over the set $\Y_n^{(k)}$ of all multiple partitions $\Lambda_n^{(k)}$ of the integer $n$ into $k$ components. Since $n$ may be chosen at the experimenter's convenience, the consistency condition as in Definition \ref{DEFINITIONMULTIPLEPARTITIONSTRUCTURE}
should be satisfied. Therefore, the sequence of distributions $\left(\mathcal{M}_n\right)_{n=1}^{\infty}$
is a multiple partition structure in the sense of Definition \ref{DEFINITIONMULTIPLEPARTITIONSTRUCTURE}.

The problem is to describe all such distributions.  The
representation theorem for multiple partition structures below solves this problem.
\begin{thm}\label{THEOREMIBIJECTIVEMULTIPLEPARTITIONSTRUCTURES}
 There is a bijective correspondence between multiple partition structures
$\left(\mathcal{M}_n^{(k)}\right)_{n=1}^{\infty}$ and probability measures $P$ on the space
\begin{equation}\label{SetNabla}
\begin{split}
\overline{\nabla}^{(k)}=&\biggl\{(x,\delta)\biggr|
x=\left(x^{(1)},\ldots,x^{(k)}\right),\;
\delta=\left(\delta^{(1)},\ldots,\delta^{(k)}\right);\\
&x^{(l)}=\left(x^{(l)}_1\geq x^{(l)}_2\geq\ldots\geq 0\right),\;\delta^{(l)}\geq 0,\;
1\leq l\leq k,\\
&\mbox{where}\;\;\sum\limits_{i=1}^{\infty}x_i^{(l)}\leq\delta^{(l)},\; 1\leq l\leq k,\;\mbox{and}\;\sum\limits_{l=1}^k\delta^{(l)}=1\biggl\}.
\end{split}
\end{equation}
 The correspondence is determined by
\begin{equation}\label{MPSCorrespondence}
\mathcal{M}_n^{(k)}\left(\Lambda_n^{(k)}\right)=\int_{\overline{\nabla}^{(k)}}
\mathbb{K}\left(\Lambda_n^{(k)},\omega\right)P(d\omega),\;\;\forall\Lambda_n^{(k)}\in\Y_n^{(k)},
\end{equation}
where $\mathbb{K}:\;\Y_n^{(k)}\times\overline{\nabla}^{(k)}\rightarrow\R$ is a kernel function.
The kernel function $\mathbb{K}\left(\Lambda^{(k)}_n,\omega\right)$ is given by
\begin{equation}
\begin{split}
&\mathbb{K}\left(\Lambda^{(k)}_n,\omega\right)=\frac{n!}{\prod\limits_{l=1}^k|\lambda^{(l)}_1|!
\ldots|\lambda^{(l)}_{n}|!} M_{\lambda^{(1)}}\left(x^{(1)},\delta^{(1)}\right)\ldots M_{\lambda^{(k)}}\left(x^{(k)},\delta^{(k)}\right),
\end{split}
\end{equation}
where $\Lambda_n^{(k)}=\left(\lambda^{(1)},\ldots,\lambda^{(k)}\right)$, and $M_{\lambda^{(l)}}\left(x^{(l)},\delta^{(l)}\right)$ are the extended monomial symmetric functions defined in terms of the usual monomial symmetric functions $m_{\lambda^{(l)}}(x_1,x_2,\ldots)$ as
\begin{equation}\label{MEXTENDEDT}
M_{\lambda^{(l)}}\left(x^{(l)},\delta^{(l)}\right)=\sum\limits_{p=0}^{\varrho_1^{(l)}}
\;\frac{\left[\delta^{(l)}-\sum\limits_{j=1}^{\infty}x^{(l)}_j\right]^p}{p!}\;
m_{\left(1^{\varrho_1^{(l)}-p}\;2^{\varrho_2^{(l)}}\ldots\right)}
\left(x^{(l)}_1,x^{(l)}_2,\ldots\right).
\end{equation}
In equation (\ref{MEXTENDEDT}) the parameter $\varrho_j^{(l)}$ is equal to the number of rows of length $j$ in $\lambda^{(l)}$, i.e.  $\lambda^{(l)}=\left(1^{\varrho_1^{(l)}}\;2^{\varrho_2^{(l)}}\ldots\right)$.
\end{thm}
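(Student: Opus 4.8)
The plan is to recognize multiple partition structures as exactly the $\theta=0$ coherent systems on $\Gamma_\theta(G)$ and then to invoke Theorem \ref{MAINTHEOREM}. First I would set up the dictionary: given a multiple partition structure $\left(\mathcal{M}_n^{(k)}\right)_{n\geq 1}$, put $\varphi\bigl(\Lambda_n^{(k)}\bigr):=\mathcal{M}_n^{(k)}\bigl(\Lambda_n^{(k)}\bigr)\big/\dim\bigl(\Lambda_n^{(k)}\bigr)$, where $\dim$ is the $\theta=0$ path-counting function of $\Gamma_0(G)$. Reading off the $\theta=0$ edge multiplicities of $\Gamma_\theta(G)$ from Section \ref{SectionThebranchingGraphGamma} --- at $\theta=0$ they are the Pieri coefficients of the monomial symmetric functions, so the multiplicity of an edge $\Lambda_{n-1}^{(k)}\nearrow\Lambda_n^{(k)}$ which adds a box to the $l$-th component is the number of rows of $\lambda^{(l)}$ having the length of that box --- one finds that $\dim\bigl(\Lambda_n^{(k)}\bigr)$ is the multinomial coefficient $n!$ divided by the product of the factorials of all the row lengths of $\Lambda_n^{(k)}$, and that the induced co-transition probability $p^{\downarrow}\bigl(\Lambda_n^{(k)},\Lambda_{n-1}^{(k)}\bigr)$ equals $jA_j^{(l)}/n$ precisely when $\Lambda_{n-1}^{(k)}$ arises from $\Lambda_n^{(k)}$ by removing a box from a row of length $j$ of the $l$-th component. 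This is exactly the chance that a ball deleted uniformly at random from the $n$ balls lies in a type-$l$ box containing $j$ balls; hence the consistency condition of Definition \ref{DEFINITIONMULTIPLEPARTITIONSTRUCTURE} coincides with the coherency relation on $\Gamma_0(G)$, equivalently $\varphi$ is a normalized nonnegative harmonic function on $\Gamma_0(G)$, and $\left(\mathcal{M}_n^{(k)}\right)\leftrightarrow\varphi$ is a bijection with inverse $\mathcal{M}_n^{(k)}(\Lambda):=\dim(\Lambda)\,\varphi(\Lambda)$.

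Second I would apply Theorem \ref{MAINTHEOREM} at $\theta=0$ and translate both the boundary and the kernel. Since the Jack symmetric functions $P_\lambda(\,\cdot\,;\theta)$ degenerate at $\theta=0$ to the monomial functions $m_\lambda$, the ``column''/dual ($\beta$-type) coordinates of $\Omega(G)$ carry no information and the boundary contracts onto $\overline{\nabla}^{(k)}$: for each $l$ the surviving ``row'' coordinates form the nonincreasing sequence $x^{(l)}=\bigl(x_1^{(l)}\geq x_2^{(l)}\geq\cdots\geq0\bigr)$, the nonatomic residue of the $l$-th component is $\gamma^{(l)}:=\delta^{(l)}-\sum_i x_i^{(l)}\geq0$ (Kingman's ``dust''), and the normalization of $\varphi$ at the root becomes $\sum_{l=1}^k\delta^{(l)}=1$. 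Under this identification the $\gamma$-extension built into the canonical kernel --- the substitution $p_1\mapsto p_1+\gamma^{(l)}$, $p_r\mapsto p_r$ for $r\geq2$, applied to $m_{\lambda^{(l)}}$ --- is precisely the finite exponential series in (\ref{MEXTENDEDT}), so the harmonic-function kernel equals $\prod_{l=1}^k M_{\lambda^{(l)}}\bigl(x^{(l)},\delta^{(l)}\bigr)$; multiplying through by $\dim\bigl(\Lambda_n^{(k)}\bigr)$ to pass back from $\varphi$ to $\mathcal{M}_n^{(k)}$ then yields formula (\ref{MPSCorrespondence}) with $\mathbb{K}$ exactly as in the statement. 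Nonnegativity of $M_{\lambda^{(l)}}\bigl(x^{(l)},\delta^{(l)}\bigr)$ on $\overline{\nabla}^{(k)}$ is immediate from (\ref{MEXTENDEDT}), and injectivity and surjectivity of $P\mapsto\left(\mathcal{M}_n^{(k)}\right)$ transfer from Theorem \ref{MAINTHEOREM} through the dictionary of the first step.

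The part I expect to require the most care is the $\theta\to0$ degeneration, which must be carried out consistently on the boundary and on the kernel at once: on one hand $\Omega(G)$ has to contract onto $\overline{\nabla}^{(k)}$ without loss of information (distinct points of $\overline{\nabla}^{(k)}$ giving distinct harmonic functions, and every harmonic function on $\Gamma_0(G)$ being realized), and on the other the $\theta\to0$ limit of the canonical kernel of Theorem \ref{MAINTHEOREM} has to be the explicit finite expression in (\ref{MEXTENDEDT}) and not an indeterminate form --- reconciling $P_\lambda(\,\cdot\,;\theta)\to m_\lambda$ with the ``$\gamma$''/dust bookkeeping is the delicate point. A secondary, essentially routine, matter is tracking the combinatorial prefactors so that $\dim\bigl(\Lambda_n^{(k)}\bigr)$ and the monomial normalizations assemble into $n!\big/\bigl(\prod_{l=1}^k|\lambda_1^{(l)}|!\cdots|\lambda_n^{(l)}|!\bigr)$. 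A self-contained alternative would be to rerun Kingman's original exhaustibility-and-Choquet argument directly on the simplex $\overline{\nabla}^{(k)}$, but deducing the theorem from Theorem \ref{MAINTHEOREM} is shorter and reuses machinery already in place, so that is the route I would take.
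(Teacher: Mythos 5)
Your proposal follows essentially the same route as the paper: the consistency condition of Definition \ref{DEFINITIONMULTIPLEPARTITIONSTRUCTURE} is identified with the coherency relation on $\Gamma_{\theta=0}(G)$ (this is exactly Proposition \ref{PropCoRStructuresHarmonicFunctions}), and Theorem \ref{MAINTHEOREM} is then specialized to $\theta=0$, where the $\theta$-extended Jack functions degenerate to the extended monomial functions (\ref{MEXTENDEDT}) and $\Omega(G)$ collapses onto $\overline{\nabla}^{(k)}$ because the $\beta$-coordinates drop out of the kernel. One bookkeeping correction: by (\ref{Upsilon}) the $\theta=0$ edge multiplicities are $d_l\,r_{L^{(l)}}\bigl(\lambda^{(l)}\bigr)$ rather than $r_{L^{(l)}}\bigl(\lambda^{(l)}\bigr)$, so $\DIM_{\theta=0}\bigl(\Lambda_n^{(k)}\bigr)$ carries an extra factor $\prod_{l=1}^k d_l^{|\lambda^{(l)}|}$ beyond your multinomial coefficient; these factors cancel in the co-transition probabilities and against the $d_l^{-|\lambda^{(l)}|}$ in the kernel of Theorem \ref{MAINTHEOREM}, so your final formula (\ref{MPSCorrespondence}) is unaffected once they are tracked.
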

Theorem \ref{THEOREMIBIJECTIVEMULTIPLEPARTITIONSTRUCTURES} is a corollary of a much more general Theorem
\ref{MAINTHEOREM}, and of Proposition \ref{PropCoRStructuresHarmonicFunctions} stated in Section \ref{SectionThebranchingGraphGamma}. Proofs of Theorem \ref{THEOREMIBIJECTIVEMULTIPLEPARTITIONSTRUCTURES}, of Theorem
\ref{MAINTHEOREM}, and of Proposition \ref{PropCoRStructuresHarmonicFunctions} can be found in Section
\ref{SECTIONPROOFOFMAINTHEOREM}, and Section \ref{SECTIONPROOFPROPOSITIONFIRSTTHEOREM}.
\begin{rem}
If $k=1$ then $\delta^{(1)}=1$, and
\begin{equation}\label{SetNabla1}
\overline{\nabla}^{(1)}=\left\{x=\left(x_1^{(1)}\geq x_2^{(1)}\geq\ldots\geq 0\right),\;\sum\limits_{i=1}^{\infty}x_i^{(1)}\leq 1\right\}.
\end{equation}
In this case we obtain a bijective correspondence between the sequence $\left(\mathcal{M}_n^{(1)}\right)_{n=1}^{\infty}$ of measures on partitions satisfying the consistency condition of Definition \ref{DEFINITIONMULTIPLEPARTITIONSTRUCTURE} (with $k=1$),  and probability measures $P$ on the space $\overline{\nabla}^{(1)}$. The correspondence is determined by
\begin{equation}\label{KingmanFormula}
\mathcal{M}_n^{(1)}\left(\lambda\right)=\int_{\overline{\nabla}^{(1)}}
\mathbb{K}\left(\lambda,\omega\right)P(d\omega),\;\;\forall\lambda\in\Y_n,
\end{equation}
where
$$
\mathbb{K}\left(\lambda,\omega\right)=\sum\limits_{p=0}^{\varrho_1}
\frac{\left[1-\sum\limits_{j=1}^{\infty}x_j\right]^{p}}{p!}
m_{\left(1^{\varrho_1-p}2^{\varrho_2}\ldots\right)}\left(x_1,x_2,\ldots\right).
$$
Formula (\ref{KingmanFormula}) was first obtained by Kingman \cite{Kingman2}. We refer the reader
to the works by Kerov \cite{KerovCombinatorialExamples},  Kerov \cite{KerovDissertation}, Okounkov, and Olshanski \cite{KerovOkounkovOlshanski} for a different derivation of this formula, and for its relation to the theory of harmonic functions on branching graphs.
\end{rem}
\section{The branching graph $\Gamma_{\theta}\left(G\right)$. Representation of harmonic functions on $\Gamma_{\theta}\left(G\right))$}\label{SectionThebranchingGraphGamma}
\subsection{Multiple partition structures as coherent systems of measures on a branching graph}
The description of multiple partition structures (Theorem \ref{THEOREMIBIJECTIVEMULTIPLEPARTITIONSTRUCTURES}) is a particular case of a more general
general result  obtained in this paper, see Theorem \ref{MAINTHEOREM} below.  Namely, we consider a certain branching graph $\Gamma_{\theta}\left(G\right)$, where $G$ is an arbitrary finite group, and $\theta\geq 0$ is a parameter called the Jack parameter. Our aim  is to  derive a canonical integral representation for harmonic functions on $\Gamma_{\theta}\left(G\right)$.
\subsubsection{The branching graph $\Gamma_{\theta}(G)$}\label{SectionTheBranchingGraph}
In order to define the branching graph $\Gamma_{\theta}\left(G\right)$ explicitly we use the algebra $\Sym\left(G\right)$ discussed in Macdonald \cite{Macdonald},  Appendix B, \S 5, which is a generalization of the algebra of symmetric functions $\Sym$ over the field of real numbers.\footnote{For a background material on symmetric functions and their applications we refer the reader to books by Macdonald \cite{Macdonald}, Borodin and Olshanski \cite{BorodinOlshanskiBook}, Chapter 2.}
 Let $G$ be a finite group, $G_{*}=\left\{c_1,\ldots,c_k\right\}$ be the set of conjugacy classes in $G$, and $G^{*}=\left\{\gamma^{1},\ldots,\gamma^k\right\}$ be the set of irreducible characters of $G$. We agree that $c_1$ contains the identity element of $G$. The algebra $\Sym(G)$ is defined by
$$
\Sym\left(G\right)=\R\left[p_{r_1}(c_1),\ldots,p_{r_k}(c_k):\;r_1\geq 1,\ldots, r_k\geq 1\right],
$$
where $p_{r_l}\left(c_l\right)$ is $r_l$-th power sum in variables $x_{1c_l}$, $x_{2c_l}$, $\ldots$.
Here $p_r\left(c_l\right)$ denotes the power symmetric function,
i.e.
$$
p_{r_l}\left(c_l\right)=x_{1c_l}^{r_l}+x_{2c_l}^{r_l}+\ldots.
$$
The degree $r$ is assigned to $p_r\left(c_l\right)$, and the $\Sym(G)$ is the graded algebra.

There is an analogue of the Jack symmetric functions in $\Sym(G)$. Namely,
for each multiple partition $\Lambda_n^{(k)}=\left(\lambda^{(1)},\ldots,\lambda^{(k)}\right)$ define
\begin{equation}\label{BIGJACK}
\mathbb{P}_{\Lambda_n^{(k)}}\left(\gamma^{1},\ldots,\gamma^{k};\theta\right)=\prod\limits_{j=1}^k
P_{\lambda^{(j)}}\left(\gamma^{j};\theta\right),\;\; \theta>0,
\end{equation}
where $P_{\lambda^{(j)}}\left(\gamma^{j};\theta\right)$ is the Jack symmetric function\footnote{Note that
$P_{\lambda^{(j)}}\left(\gamma^{j};\theta\right)$ is the $P$-version of the Jack symmetric functions, see
Ref. \cite{Jack} for an overview of different versions of the Jack symmetric functions and related topics.}
 expressed as a polynomial in variables $\left\{p_r\left(\gamma^{j}\right):\; r\geq 1\right\}$, where
\begin{equation}\label{PrGammaInTermsPrC}
p_r\left(\gamma^{j}\right)=\sum\limits_{i=1}^k\frac{|G|}{|c_i|}\gamma^{j}\left(c_i\right)p_r\left(c_i\right).
\end{equation}
Here $|G|$ denotes the number of elements in the group $G$, and $|c_i|$ denotes the number of elements in the conjugacy class $c_i$.
\begin{prop}\label{PropositionPieriRule} The following Pieri-type rule holds true
\begin{equation}\label{P.1.1.1}
\begin{split}
&p_1(c_1)\mathbb{P}_{\Lambda_{n}^{(k)}}\left(\gamma^{1},\ldots,\gamma^{k};\theta\right)=\sum\limits_{\widetilde{\Lambda}_{n+1}^{(k)}\in\Y_{n+1}^{(k)}}\Upsilon_{\theta}\left(\Lambda_{n}^{(k)},\widetilde{\Lambda}_{n+1}^{(k)}\right)
\mathbb{P}_{\widetilde{\Lambda}_{n+1}^{(k)}}\left(\gamma^{1},\ldots,\gamma^{k};\theta\right),
\end{split}
\end{equation}
where $\Upsilon_{\theta}:\Y_{n}^{(k)}\times\Y_{n+1}^{(k)}\rightarrow\R_{\geq 0}$ is the multiplicity function which can be written explicitly.
\end{prop}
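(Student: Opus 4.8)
The plan is to reduce the Pieri rule (\ref{P.1.1.1}) to the classical Pieri rule for Jack symmetric functions, applied independently in each of the $k$ colours, after rewriting the left-hand multiplier $p_1(c_1)$ in terms of the ``character'' variables $\{p_r(\gamma^j)\}$ in which $\mathbb{P}_{\Lambda_n^{(k)}}$ is expressed. First I would invert the linear change of variables (\ref{PrGammaInTermsPrC}) in degree one. Since $c_1=\{e\}$ we have $|c_1|=1$ and $\gamma^j(c_1)=\dim\gamma^j=:f^j$. Multiplying (\ref{PrGammaInTermsPrC}) with $r=1$ by $f^j$ and summing over $j$ produces the coefficient $\sum_{j=1}^k f^j\gamma^j(c_i)$, which is the value on $c_i$ of the character $\sum_j f^j\gamma^j$ of the regular representation of $G$; hence it equals $|G|$ if $c_i=c_1$ and $0$ otherwise (equivalently, this is column orthogonality of the character table). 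Thus all terms with $i\neq 1$ drop out and
$$
\sum_{j=1}^k f^j\,p_1(\gamma^j)=|G|^2\,p_1(c_1),
\qquad\text{so}\qquad
p_1(c_1)=\frac{1}{|G|^2}\sum_{j=1}^k f^j\,p_1(\gamma^j),
$$
a \emph{nonnegative} linear combination of $p_1(\gamma^1),\dots,p_1(\gamma^k)$.

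Next I would substitute this into $\mathbb{P}_{\Lambda_n^{(k)}}=\prod_{i=1}^k P_{\lambda^{(i)}}(\gamma^i;\theta)$. Because the $i$-th factor involves only the variables $\{p_r(\gamma^i)\}$, multiplication by $p_1(\gamma^j)$ affects only the $j$-th factor, giving
$$
p_1(c_1)\,\mathbb{P}_{\Lambda_n^{(k)}}
=\frac{1}{|G|^2}\sum_{j=1}^k f^j\,
\Bigl(p_1(\gamma^j)\,P_{\lambda^{(j)}}(\gamma^j;\theta)\Bigr)\prod_{i\neq j}P_{\lambda^{(i)}}(\gamma^i;\theta).
$$
Now I invoke the classical Pieri rule for the $P$-normalized Jack symmetric functions, $p_1\,P_{\mu}(\theta)=\sum_{\nu}\psi'_{\nu/\mu}(\theta)\,P_{\nu}(\theta)$, where $\nu$ runs over the diagrams obtained from $\mu$ by adding a single box and the Jack--Pieri coefficient $\psi'_{\nu/\mu}(\theta)$ is nonnegative for $\theta>0$ (Macdonald \cite{Macdonald}, Ch.\ VI, specializing the Macdonald--Pieri rule to the Jack case; see also \cite{KerovOkounkovOlshanski}; $\psi'_{\nu/\mu}(\theta)$ is an explicit product over the boxes lying in the column of the added box). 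Applying this in the $j$-th factor and regrouping, each resulting term is $\prod_i P_{\mu^{(i)}}(\gamma^i;\theta)=\mathbb{P}_{\widetilde\Lambda_{n+1}^{(k)}}(\gamma^1,\dots,\gamma^k;\theta)$, where $\widetilde\Lambda_{n+1}^{(k)}$ is obtained from $\Lambda_n^{(k)}$ by adding exactly one box to one component. This is (\ref{P.1.1.1}) with
$$
\Upsilon_\theta\bigl(\Lambda_n^{(k)},\widetilde\Lambda_{n+1}^{(k)}\bigr)=\frac{f^j}{|G|^2}\,\psi'_{\nu^{(j)}/\lambda^{(j)}}(\theta)
$$
if $\widetilde\Lambda_{n+1}^{(k)}$ differs from $\Lambda_n^{(k)}$ only in its $j$-th component, where $\nu^{(j)}=\lambda^{(j)}+\square$, and $\Upsilon_\theta=0$ otherwise; the explicit closed form then follows from the known product formula for $\psi'$. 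Nonnegativity of $\Upsilon_\theta$ is immediate, and $\Upsilon_\theta$ visibly vanishes unless $\widetilde\Lambda_{n+1}^{(k)}$ arises from $\Lambda_n^{(k)}$ by adding a box to a single component, as required for a branching graph.

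I do not expect a genuine conceptual obstacle here: the work is concentrated in (i) the degree-one inversion of (\ref{PrGammaInTermsPrC}), where the orthogonality relations of the characters of $G$ (equivalently, the regular representation) are the crucial input, and (ii) the observation that the $k$ colours decouple, so that the one-variable Jack Pieri rule can be quoted verbatim. One minor point to address is the boundary value $\theta=0$ allowed in the ambient setup of the section: there $\mathbb{P}_{\Lambda_n^{(k)}}$ and the coefficients $\psi'$ are interpreted as the $\theta\to 0^+$ limits (Jack-$P$ functions degenerating to monomial symmetric functions), and $\Upsilon_0$ is defined by the corresponding specialization; the decoupling argument above applies without change.
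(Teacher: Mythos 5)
Your argument is essentially the paper's own proof: invert the degree-one change of variables between $\{p_1(c_i)\}$ and $\{p_1(\gamma^j)\}$ via character orthogonality, then apply the one-box Jack Pieri rule $p_1P_{\mu}=\sum_{\lambda:\,\mu\nearrow\lambda}\chi_{\theta}(\mu,\lambda)P_{\lambda}$ in the affected component only (your $\psi'_{\nu/\mu}(\theta)$ is exactly the paper's $\chi_{\theta}$), the $k$ components decoupling just as you say. The only point to repair is the normalization constant. The paper's proof rests on the identity $p_1(c_1)=\sum_j d_j\,p_1(\gamma^j)$ (its equation (\ref{1.4})), which is what produces the multiplicity function $\Upsilon_{\theta}$ of (\ref{Upsilon}), namely $d_j\chi_{\theta}\left(\mu^{(j)},\lambda^{(j)}\right)$, used throughout the rest of the paper; that identity corresponds to Macdonald's normalization $p_r(\gamma^j)=\sum_i\frac{|c_i|}{|G|}\gamma^j(c_i)\,p_r(c_i)$. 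You instead took the printed factor $\frac{|G|}{|c_i|}$ in (\ref{PrGammaInTermsPrC}) at face value and combined it with the standard column orthogonality, which gives $p_1(c_1)=|G|^{-2}\sum_j d_j\,p_1(\gamma^j)$ and hence a multiplicity function equal to $|G|^{-2}d_j\chi_{\theta}$; this does prove \emph{a} Pieri-type rule with explicit nonnegative coefficients (the literal statement of the Proposition), but it is a global rescaling of the $\Upsilon_{\theta}$ in (\ref{Upsilon}) that the subsequent sections rely on. The mismatch traces to a misprint in the paper ((\ref{PrGammaInTermsPrC}) and (\ref{SecondOrthogonalityRelationOfCharacters}) carry $|G|/|c|$ where $|c|/|G|$ is intended, as the internally consistent equations (\ref{1.3})--(\ref{1.4}) show), so once the intended normalization is adopted your computation reproduces the paper's coefficients verbatim.
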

We postpone the proof of Proposition \ref{PropositionPieriRule} up to Section \ref{SectionProofPieri}.

In order to present a formula for the multiplicity function $\Upsilon_{\theta}$ we need the following notation.
Let $\Lambda_n^{(k)}\in\Y_{n}^{(k)}$ and
$\widetilde{\Lambda}_{n+1}^{(k)}\in\Y_{n+1}^{(k)}$. Thus $\tLambda_{n+1}^{(k)}=\left(\lambda^{(1)},\ldots,\lambda^{(k)}\right)$
and $\Lambda_n^{(k)}=\left(\mu^{(1)},\ldots,\mu^{(k)}\right)$ are such that
$\sum\limits_{i=1}^k\left|\lambda^{(i)}\right|=n+1$, $\sum\limits_{i=1}^k\left|\mu^{(i)}\right|=n$. Assume that  there exists $l\in\{1,\ldots,k\}$ such that $\mu^{(l)}
\nearrow\lambda^{(l)}$ (i.e. $\lambda^{(l)}$ is obtained from $\mu^{(l)}$ by adding one box), and such that $\mu^{(i)}=\lambda^{(i)}$ for each $i$, $i\in\left\{1,\ldots,k\right\}$, $i\neq l$. Then we write $\Lambda_n^{(k)}\nearrow\widetilde{\Lambda}_{n+1}^{(k)}$.

 The multiplicity function $\Upsilon_{\theta}\left(\Lambda_n^{(k)},\widetilde{\Lambda}_{n+1}^{(k)}\right)$ is given by
\begin{equation}\label{Upsilon}
\Upsilon_{\theta}\left(\Lambda_n^{(k)},\widetilde{\Lambda}_{n+1}^{(k)}\right)=
\left\{
 \begin{array}{ll}
d_1\chi_{\theta}\left(\mu^{(1)},\lambda^{(1)}\right) , & \mu^{(1)}\nearrow\lambda^{(1)}, \\
d_2\chi_{\theta}\left(\mu^{(2)},\lambda^{(2)}\right) , & \mu^{(2)}\nearrow\lambda^{(2)}, \\
\vdots &  \\
d_k\chi_{\theta}\left(\mu^{(k)},\lambda^{(k)}\right) , & \mu^{(k)}\nearrow\lambda^{(k)}
\end{array}
\right.
\end{equation}
provided that $\Lambda_n^{(k)}\nearrow\widetilde{\Lambda}_{n+1}^{(k)}$, and by $\Upsilon_{\theta}\left(\Lambda_n^{(k)},\widetilde{\Lambda}_{n+1}^{(k)}\right)=0$ otherwise.
Here $n=0,1,2,\ldots$, and we agree that  $\Lambda_{0}^{(k)}=\left(\emptyset,\ldots,\emptyset\right)$.
In equation (\ref{Upsilon})
$d_j$ is the dimension of the irreducible representation of $G$ whose character is $\gamma^{j}$, and $\chi_{\theta}$ is the Jack edge multiplicity function given by an explicit formula
\begin{equation}\label{TheJackMultiplicityFunction}
\chi_{\theta}\left(\mu,\lambda\right)=\prod\limits_{\Box}
\frac{\left(a(\Box)+(l(\Box)+2)\theta\right)\left(a(\Box)+1
+l(\Box)\theta\right)}{\left(a(\Box)+1+(l(\Box)+1)\theta\right)\left(a(\Box)+
\left(l(\Box)+1\right)\theta\right)}.
\end{equation}
Here $\Box$ runs over all boxes in the $j$-th column of the diagram $\mu$, provided that the new box $\lambda\setminus\mu$ belongs to the $j$-th column of $\lambda$. If $\Box=(i,j)$ in the diagram $\mu$, then $a(\Box)=\mu_i-j$, and $l(\Box)=\mu'_j-i$, see Macdonald \cite{Macdonald}, Chapter VI, (10.10) and (6.24.iv). It is known (see Kerov, Okounkov, and  Olshanski \cite{KerovOkounkovOlshanski}, Lemma 5.1) that the coefficients $\chi_{\theta}(\mu,\lambda)$
are all positive if and only if $\theta\geq 0$. Therefore, the coefficients are
$\Upsilon_{\theta}\left(\Lambda_n^{(k)},\widetilde{\Lambda}_{n+1}^{(k)}\right)$  are
also positive for $\theta\geq 0$.

It is important that in the limit $\theta\rightarrow 0$ the Jack symmetric functions degenerate to the monomial symmetric functions, so equations (\ref{BIGJACK}) and (\ref{P.1.1.1})
make sense for $\theta=0$ as well.

Let $\Y^{(k)}$ denote the union of the sets $\Y_n^{(k)}$
(with the understanding that $\Y_0^{(k)}$ contains the element $\Lambda^{(k)}_{0}=\left(\emptyset,\ldots,\emptyset\right)$ only). We define a branching graph
$\Gamma_{\theta}\left(G\right)$ with vertex set
$\Y^{(k)}$ by declaring that a pair of vertices $\left(\Lambda_{n-1}^{(k)},\widetilde{\Lambda}_{n}^{(k)}\right)$
(where $\Lambda_{n-1}^{(k)}\in\Y_{n-1}^{(k)}$ and $\widetilde{\Lambda}_{n}^{(k)}\in\Y_n^{(k)}$) is connected
by an edge of multiplicity $\Upsilon_{\theta}\left(\Lambda_{n-1}^{(k)},\widetilde{\Lambda}_{n}^{(k)}\right)$
provided that $\Upsilon_{\theta}\left(\Lambda_{n-1}^{(k)},\widetilde{\Lambda}_{n}^{(k)}\right)\neq 0$.

The branching graph $\Gamma_{\theta}\left(G\right)$ depends on a fixed labeling map
$$
\left\{1,2,\ldots,k\right\}\longrightarrow\{\mbox{the set of irreducible characters of}\;\; G\}.
$$
Indeed, as it is clear from equation (\ref{Upsilon}) the multiplicity function $\Upsilon_{\theta}\left(\Lambda_n^{(k)},\widetilde{\Lambda}_{n+1}^{(k)}\right)$ depends on such labeling.
However, the theory presented below is independent on what labeling of irreducible characters of $G$ is chosen.
\subsubsection{The dimension function}\label{SectionDimensionFunction} Assume that $m\leq n$.
The dimension function $\DIM_{\theta}\left(\Lambda^{(k)}_m,\widetilde{\Lambda}^{(k)}_n\right)$ associated with the branching graph $\Gamma_{\theta}\left(G\right)$ is defined recurrently by the formula
\begin{equation}\label{DimensionFunction}
\DIM_{\theta}\left(\Lambda^{(k)}_m,\widetilde{\Lambda}^{(k)}_n\right)
=\sum\limits_{\Theta^{(k)}_{n-1}:\;\Theta^{(k)}_{n-1}\nearrow\widetilde{\Lambda}^{(k)}_{n}}
\DIM_{\theta}\left(\Lambda^{(k)}_m,\Theta^{(k)}_{n-1}\right)\Upsilon_{\theta}
\left(\Theta^{(k)}_{n-1},\widetilde{\Lambda}^{(k)}_n\right),
\end{equation}
by $\DIM_{\theta}\left(\Lambda^{(k)}_m,\Lambda^{(k)}_m\right)=1$, and by $\DIM_{\theta}\left(\Lambda^{(k)}_m,\widetilde{\Lambda}^{(k)}_n \right)=0$ if there is no oriented path from $\Lambda^{(k)}_m$ to $\widetilde{\Lambda}^{(k)}_n$ on $\Gamma_{\theta}\left(G\right)$. We write
$
\DIM_{\theta}\left(\Lambda^{(k)}_m\right)=\DIM_{\theta}\left(\emptyset,\Lambda^{(k)}_m\right).
$
\subsubsection{Harmonic functions. Multiple partition structures as coherent systems of measures on
$\Gamma_{\theta=0}\left(G\right)$}\label{SubSubSectionHarmonicFunctions}
In what follows we sometimes denote the set of vertices of the branching graph $\Gamma_{\theta}\left(G\right)$ by the same symbol
$\Gamma_{\theta}\left(G\right)$ (even though it is equal to $\Y^{(k)}$).
\begin{defn}\label{DEFINITIONHARMONICFUNCTIONS}
A function $\varphi:\; \Gamma_{\theta}\left(G\right)\rightarrow\R_{\geq 0}$ is called harmonic on $\Gamma_{\theta}(G)$ if for each $n=1,2,\ldots$
\begin{equation}\label{EqDefHarFun}
\varphi\left(\Lambda_{n-1}^{(k)}\right)=\sum\limits_{\widetilde{\Lambda}_n^{(k)}\in\Y_n^{(k)}}\Upsilon_{\theta}
\left(\Lambda_{n-1}^{(k)},\widetilde{\Lambda}_n^{(k)}\right)\varphi\left(\widetilde{\Lambda}_n^{(k)}\right),\;\;\forall\Lambda_{n-1}^{(k)}\in\Y^{(k)}_{n-1},
\end{equation}
and $\varphi\left(\Lambda^{(k)}_0\right)=1$. Here $\Upsilon_{\theta}$ is the multiplicity function defined by equation (\ref{Upsilon}).
\end{defn}

Let $\varphi$ be the harmonic function on $\Gamma_{\theta}\left(G\right)$. Set
\begin{equation}\label{CoherentSystemsHarmonicFunctions}
\mathcal{M}_n^{(k)}\left(\Lambda_n^{(k)}\right)=\DIM_{\theta}\left(\Lambda_n^{(k)}\right)
\varphi\left(\Lambda_n^{(k)}\right).
\end{equation}
It will be verified in Section \ref{Section7.3} that each $\mathcal{M}_n^{(k)}$ defined by equation
(\ref{CoherentSystemsHarmonicFunctions}) is a probability measure on $\Y_n^{(k)}$.
The sequence  $\left(\mathcal{M}_n^{(k)}\right)_{n=1}^{\infty}$ (where each element is defined by
equation (\ref{CoherentSystemsHarmonicFunctions})) is called a \textit{coherent system of probability measures} associated with the harmonic function $\varphi$  on
$\Gamma_{\theta}\left(G\right)$.

As $\theta=0$, a relation with the multiple partition structures introduced in Section \ref{SectionIntroductionMultiplePartitionStructures} arises.
 \begin{prop}\label{PropCoRStructuresHarmonicFunctions}
Assume that $\theta=0$, and let $\varphi$ be a harmonic function on $\Gamma_{\theta=0}(G)$.  Then the coherent sequence of probability measures $\left(\mathcal{M}_n^{(k)}\right)_{n=1}^{\infty}$
associated with
$\varphi$
is a multiple partition structure in the sense of Definition \ref{DEFINITIONMULTIPLEPARTITIONSTRUCTURE}.
 Conversely, each multiple partition structure $\left(\mathcal{M}_n^{(k)}\right)_{n=1}^{\infty}$
 is a coherent system of probability measures associated with a certain  harmonic function $\varphi$ on $\Gamma_{\theta=0}(G)$ via equation (\ref{CoherentSystemsHarmonicFunctions}).
 \end{prop}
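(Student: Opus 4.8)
The plan is to reduce the statement to one combinatorial identity: when $\theta=0$, the canonical down-transition kernel of the graded graph $\Gamma_{\theta=0}(G)$ coincides with the ``delete a ball uniformly at random'' Markov kernel $p^{\downarrow}$ of Definition~\ref{DEFINITIONMULTIPLEPARTITIONSTRUCTURE}. Once this is shown, both implications are formal. So I fix an edge $\Lambda^{(k)}_{n-1}\nearrow\widetilde\Lambda^{(k)}_n$; say the extra box is added to component $l$, write $\widetilde\Lambda^{(k)}_n=(\lambda^{(1)},\dots,\lambda^{(k)})$ and let $\mu^{(l)}$ be the $l$-th component of $\Lambda^{(k)}_{n-1}$, so $\mu^{(l)}\nearrow\lambda^{(l)}$ and the remaining components of $\Lambda^{(k)}_{n-1}$ and $\widetilde\Lambda^{(k)}_n$ agree; let $j$ be the length in $\lambda^{(l)}$ of the row carrying the extra box and $\varrho_j(\cdot)$ denote the number of rows of a diagram of a given length.

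First I would make $\chi_{0}$, hence $\Upsilon_{0}$, explicit. For a pair $\mu\nearrow\lambda$ with the new box in column $j$, letting $\theta\to0$ in (\ref{TheJackMultiplicityFunction}): each box of the $j$-th column of $\mu$ with nonzero arm contributes the factor $1$, while a box $\Box=(i,j)$ with zero arm (i.e.\ $\mu_i=j$) contributes the indeterminate expression whose limit is $\tfrac{l(\Box)+2}{l(\Box)+1}$. The zero-arm boxes are exactly the rows of $\mu$ of length $j$, so running through their legs $l(\Box)=0,1,\dots,\varrho_j(\mu)-1$ telescopes the product to $\varrho_j(\mu)+1=\varrho_j(\lambda)$; thus $\chi_{0}(\mu,\lambda)=\varrho_j(\lambda)$. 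By (\ref{Upsilon}), $\Upsilon_{0}\bigl(\Lambda^{(k)}_{n-1},\widetilde\Lambda^{(k)}_n\bigr)=d_l\,\varrho_j(\lambda^{(l)})$; in particular $\Gamma_{\theta=0}(G)$ carries the Kingman-type edge multiplicities.

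Next I would assemble the two remaining ingredients. By induction on $n$ from the recursion (\ref{DimensionFunction}) and the formula for $\Upsilon_{0}$ above --- the paths $\Lambda^{(k)}_0\to\Lambda^{(k)}_n$ being the interleavings of one-component subpaths --- one obtains the closed form
\[
\DIM_{0}\bigl(\Lambda^{(k)}_n\bigr)=n!\,\prod_{l=1}^{k}\frac{d_l^{\,|\lambda^{(l)}|}}{\prod_{i}\bigl(\lambda^{(l)}_i!\bigr)},
\]
whence $\DIM_{0}(\widetilde\Lambda^{(k)}_n)/\DIM_{0}(\Lambda^{(k)}_{n-1})=n\,d_l/j$ for the edge above. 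On the other side, in the balls-in-boxes encoding of Section~\ref{SectionIntroductionMultiplePartitionStructures}, deleting one of the $n$ balls uniformly turns a type-$l$ box holding $j$ balls into one holding $j-1$ balls, i.e.\ removes a box from a length-$j$ row of $\lambda^{(l)}$; since all $\varrho_j(\lambda^{(l)})$ such boxes yield the same multiple partition $\Lambda^{(k)}_{n-1}$, the deletion probability is $p^{\downarrow}(\widetilde\Lambda^{(k)}_n,\Lambda^{(k)}_{n-1})=j\,\varrho_j(\lambda^{(l)})/n$. Multiplying these displays together gives the key identity
\[
\DIM_{0}\bigl(\Lambda^{(k)}_{n-1}\bigr)\,\Upsilon_{0}\bigl(\Lambda^{(k)}_{n-1},\widetilde\Lambda^{(k)}_n\bigr)=\DIM_{0}\bigl(\widetilde\Lambda^{(k)}_n\bigr)\,p^{\downarrow}\bigl(\widetilde\Lambda^{(k)}_n,\Lambda^{(k)}_{n-1}\bigr),
\]
valid for every edge; equivalently, the canonical down-kernel $\DIM_{0}(\Lambda^{(k)}_{n-1})\Upsilon_{0}(\Lambda^{(k)}_{n-1},\widetilde\Lambda^{(k)}_n)/\DIM_{0}(\widetilde\Lambda^{(k)}_n)$ of $\Gamma_{\theta=0}(G)$ equals $p^{\downarrow}$.

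Given the key identity, the two directions follow. If $\varphi$ is harmonic, then the $\mathcal{M}^{(k)}_n=\DIM_{0}\cdot\varphi$ are probability measures by Section~\ref{Section7.3}, and multiplying (\ref{EqDefHarFun}) by $\DIM_{0}(\Lambda^{(k)}_{n-1})$ and using the key identity turns it into $\mathcal{M}^{(k)}_{n-1}(\Lambda^{(k)}_{n-1})=\sum_{\widetilde\Lambda^{(k)}_n}\mathcal{M}^{(k)}_n(\widetilde\Lambda^{(k)}_n)\,p^{\downarrow}(\widetilde\Lambda^{(k)}_n,\Lambda^{(k)}_{n-1})$, which is precisely the consistency condition of Definition~\ref{DEFINITIONMULTIPLEPARTITIONSTRUCTURE}; so $(\mathcal{M}^{(k)}_n)$ is a multiple partition structure. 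Conversely, given a multiple partition structure $(\mathcal{M}^{(k)}_n)$ I would set $\varphi:=\mathcal{M}^{(k)}_n/\DIM_{0}$; this is well defined and $\R_{\geq 0}$-valued because $\DIM_{0}>0$ on every vertex (the $\Upsilon_{0}$ are positive on all edges and every vertex is joined to $\Lambda^{(k)}_0$ by a path), it satisfies $\varphi(\Lambda^{(k)}_0)=1$, and dividing the consistency condition by $\DIM_{0}(\Lambda^{(k)}_{n-1})$ and applying the key identity in reverse recovers (\ref{EqDefHarFun}); hence $\varphi$ is harmonic and, by construction, its coherent system (\ref{CoherentSystemsHarmonicFunctions}) is the given $(\mathcal{M}^{(k)}_n)$. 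These two assignments are mutually inverse, which is the claim. The one nonroutine point is the $\theta\to0$ bookkeeping of the middle two paragraphs: the indeterminate zero-arm factors of (\ref{TheJackMultiplicityFunction}) are exactly what degenerate into the Kingman multiplicities, and the constants in $\DIM_{0}$ must be tracked with care; after the key identity everything is a formal manipulation of stochastic kernels on a graded graph, in the spirit of Kerov, Okounkov, and Olshanski \cite{KerovOkounkovOlshanski}.
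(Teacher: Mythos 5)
Your proposal is correct and follows essentially the same route as the paper: the heart of both arguments is the single identity $\DIM_{\theta=0}\bigl(\Lambda^{(k)}_{n-1}\bigr)\Upsilon_{\theta=0}\bigl(\Lambda^{(k)}_{n-1},\widetilde\Lambda^{(k)}_n\bigr)/\DIM_{\theta=0}\bigl(\widetilde\Lambda^{(k)}_n\bigr)=\Prob\bigl(\Lambda^{(k)}_{n-1}\,\big|\,\widetilde\Lambda^{(k)}_n\bigr)$, obtained from the Kingman degeneration $\chi_{0}(\mu,\lambda)=r_j(\lambda)$, the closed form of $\DIM_{\theta=0}$, and the uniform-deletion probabilities $j\,r_j(\lambda^{(l)})/n$, after which both implications are formal. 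The only differences are cosmetic: you verify $\chi_0$ by a direct $\theta\to0$ telescoping and get $\DIM_{\theta=0}$ by induction, where the paper cites Kerov--Okounkov--Olshanski and its Proposition on the dimension function.
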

 Proposition \ref{PropCoRStructuresHarmonicFunctions} implies that there is a one-to-one correspondence between multiple partition structures and harmonic functions on $\Gamma_{\theta=0}(G)$.
 This bijective  relation
is  a generalization of  the known relation (see, for example,  Kerov, Okounkov, and Olshanski \cite{KerovOkounkovOlshanski},  Section 4)  between the Kingman partition structures and the harmonic functions on the Young graph with the Jack edge multiplicities  at $\theta=0$ (also known as the Kingman branching graph). Proof of Proposition \ref{PropCoRStructuresHarmonicFunctions} can be found in Section \ref{SECTIONPROOFPROPOSITIONFIRSTTHEOREM}.

 \subsection{Representation of harmonic functions}\label{SectionRepresentationHarmonic Functions}
 In this Section we state the main result of the present paper: description of all non-negative harmonic functions $\varphi$, $\varphi:\;\Gamma_{\theta}(G)\rightarrow\R_{\geq 0}$
 defined on the set of vertices in $\Gamma_{\theta}(G)$ and normalized by the condition
 $\varphi\left(\Lambda_0^{(k)}\right)=1$, see Definition \ref{DEFINITIONHARMONICFUNCTIONS}. We will show that there is a one-to-one correspondence between such functions and probability measures on the set $\Omega(G)$ defined by
 \begin{equation}\label{THEGENERALIZEDTHOMASET}
\begin{split}
&\Omega(G)=\biggl\{(\alpha,\beta,\delta)\biggr|
\alpha=\left(\alpha^{(1)},\ldots,\alpha^{(k)}\right),\beta=\left(\beta^{(1)},\ldots,\beta^{(k)}\right),
\delta=\left(\delta^{(1)},\ldots,\delta^{(k)}\right);\\
&\alpha^{(l)}=\left(\alpha_1^{(l)}\geq\alpha_2^{(l)}\geq\ldots\geq 0\right), \beta^{(l)}=\left(\beta_1^{(l)}\geq\beta_2^{(l)}\geq\ldots\geq 0\right),\\
&\delta^{(1)}\geq 0,\ldots,\delta^{(k)}\geq 0,\\
&\mbox{where}\;\;
\sum\limits_{i=1}^{\infty}\alpha_i^{(l)}+\beta_i^{(l)}\leq\delta^{(l)},\; 1\leq l\leq k,\;\mbox{and}\;\sum\limits_{l=1}^k\delta^{(l)}=1\biggl\}.
\end{split}
\end{equation}
 If $G$ is the group of one element, then $k=1$, and the set $\Omega(G)$ turns into the Thoma
 simplex. For this reason we will refer to $\Omega(G)$ as  \textit{generalized Thoma set}.
 The set $\Omega\left(G\right)$ was introduced in the work by Hora and Hirai \cite{HoraHirai}
 to describe general characters of the wreath products of a finite group with the infinite symmetric group.

 In order to present our result we use the \textit{$\theta$-extended power sum symmetric functions}
 $p_{r,l}^{o}\left(.;\theta\right)$. These are real-valued functions on $\Omega(G)$ defined by
\begin{equation}\label{pnol}
p_{r,l}^{o}(\alpha,\beta,\delta;\theta)=\left\{\begin{array}{ll}
\sum\limits_{i=1}^{\infty}\left(\alpha^{(l)}_i\right)^r+(-\theta)^{r-1}\sum\limits_{i=1}^{\infty}
\left(\beta_i^{(l)}\right)^r, & r=2,3,\ldots, \\
\delta^{(l)}, & r=1,
\end{array}
\right.
\end{equation}
where $1\leq l\leq k$. In addition, we introduce the \textit{$\theta$-extended Jack symmetric functios}.
Namely, let $\Lambda_n^{(k)}=\left(\lambda^{(1)},\ldots,\lambda^{(k)}\right)\in\Y_n^{(k)}$ be a multiple partition.
For each $l$, $1\leq l\leq k$, the $\theta$-extended Jack symmetric function $P_{\lambda^{(l)}}^{o}\left(.;\theta\right)$
is defined as follows. Consider the Jack symmetric function $P_{\lambda^{(l)}}$ with the Jack parameter $\theta$, and express it in terms of the power sums $p_r$, $r\geq 1$. Replace each $p_r$ by the $\theta$-extended power symmetric functions
 $p_{r,l}^{o}\left(.;\theta\right)$ defined by equation (\ref{pnol}). The result is a function
 $P_{\lambda^{(l)}}^{o}\left(.;\theta\right):\Omega(G)\rightarrow\R$ which will be called  the  $\theta$-extended Jack symmetric function in this paper.

Now we are ready to state the main result of the present work.
\begin{thm}\label{MAINTHEOREM}
Let $k$ be the number of conjugacy classes of $G$.
There is a bijective correspondence between the set of  harmonic functions $\varphi$ on $\Gamma_{\theta}(G)$, and the set of  probability measures $P$ on the generalized Thoma set $\Omega(G)$. This correspondence is determined by
\begin{equation}\label{HarmonicFunctionProbabilityMeasure}
\varphi\left(\Lambda_n^{(k)}\right)=\int\limits_{\Omega(G)}\mathbb{K}_{\theta}\left(\Lambda_n^{(k)},\omega\right)P(d\omega),\;\;\forall \Lambda_n^{(k)}=\left(\lambda^{(1)},\ldots,\lambda^{(k)}\right)\in\Y_n^{(k)},
\end{equation}
where $n=1,2,...$.
 The  kernel $\mathbb{K}_{\theta}\left(\Lambda_n^{(k)},\omega\right)$ is given by
\begin{equation}\label{LimitingMartinKernel}
\mathbb{K}_{\theta}\left(\Lambda_n^{(k)},\omega\right)=\prod\limits_{l=1}^k\frac{1}{d_l^{|\lambda^{(l)}|}}
P_{\lambda^{(l)}}^{o}\left(\alpha,\beta,\delta;\theta\right).
\end{equation}
Here $P_{\lambda^{(l)}}^{o}\left(\alpha,\beta,\delta;\theta\right)$ denotes the $\theta$-extended Jack symmetric function parameterized by $\lambda^{(l)}$, and $d_l$ are the dimensions of irreducible representations of $G$.
\end{thm}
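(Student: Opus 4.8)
The plan is to deduce Theorem~\ref{MAINTHEOREM} from the general formalism of boundaries of branching graphs together with the already known one‑component case — Theorem~B of Kerov, Okounkov, and Olshanski~\cite{KerovOkounkovOlshanski} — by realizing $\Gamma_\theta(G)$ as a weighted $k$‑fold product of copies of the Young graph with Jack edge multiplicities. First I would record the standard structural facts: the set of harmonic functions $\varphi$ on $\Gamma_\theta(G)$ normalized by $\varphi(\Lambda_0^{(k)})=1$ is a Choquet simplex (a projective limit of the finite‑dimensional simplices of coherent measures at the successive levels $n$), so every such $\varphi$ admits a unique integral decomposition over its extreme points; and, by the Vershik--Kerov ergodic method, each extreme $\varphi$ arises as a limit $\varphi(\Lambda_m^{(k)})=\lim_{n\to\infty}\DIM_\theta(\Lambda_m^{(k)},\widetilde\Lambda_n^{(k)})/\DIM_\theta(\widetilde\Lambda_n^{(k)})$ along a suitable sequence $\widetilde\Lambda_n^{(k)}$ going to infinity. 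Thus the whole problem reduces to: computing these ratio limits, identifying them with the kernels $\mathbb{K}_\theta(\cdot,\omega)$ of (\ref{LimitingMartinKernel}), checking that every $\omega\in\Omega(G)$ in turn gives a harmonic function, and verifying that $\omega\mapsto\mathbb{K}_\theta(\cdot,\omega)$ is a continuous injection onto the (closed) set of extreme harmonic functions; Choquet's theorem then yields the asserted bijection with probability measures on $\Omega(G)$.

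Next I would exploit the product structure. Each edge of $\Gamma_\theta(G)$ adds a box to exactly one component $l$ and carries the extra weight $d_l$; hence the substitution $\psi(\Lambda_n^{(k)})=\bigl(\prod_{l=1}^{k} d_l^{\,|\lambda^{(l)}|}\bigr)\varphi(\Lambda_n^{(k)})$ converts the harmonicity equation (\ref{EqDefHarFun}) into harmonicity for the pure $k$‑fold product (as a graded graph with multiplicities) of the Young graph equipped with the Jack edge multiplicities $\chi_\theta$. Correspondingly, writing $\Lambda_m^{(k)}=(\mu^{(l)})_{l=1}^k$, $\widetilde\Lambda_n^{(k)}=(\lambda^{(l)})_{l=1}^k$ and denoting by $\dim_\theta$ the one‑component Jack dimension function, one has
\[
\DIM_\theta\bigl(\Lambda_m^{(k)},\widetilde\Lambda_n^{(k)}\bigr)
=\frac{(n-m)!}{\prod_{l=1}^{k}\bigl(|\lambda^{(l)}|-|\mu^{(l)}|\bigr)!}\;
\prod_{l=1}^{k} d_l^{\,|\lambda^{(l)}|-|\mu^{(l)}|}\;\dim_\theta\bigl(\mu^{(l)},\lambda^{(l)}\bigr).
\]
Substituting this into $\DIM_\theta(\Lambda_m^{(k)},\widetilde\Lambda_n^{(k)})/\DIM_\theta(\widetilde\Lambda_n^{(k)})$ and letting $n\to\infty$ along a sequence whose $l$‑th component has $\sim\delta^{(l)}n$ boxes, the multinomial coefficients contribute $\prod_l(\delta^{(l)})^{|\mu^{(l)}|}$ (with $\sum_l\delta^{(l)}=1$ automatically), the $d_l$‑powers contribute $\prod_l d_l^{-|\mu^{(l)}|}$, and the surviving factor $\prod_l \dim_\theta(\mu^{(l)},\lambda^{(l)})/\dim_\theta(\emptyset,\lambda^{(l)})$ is exactly the object controlled by the one‑component theorem. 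By Theorem~B of \cite{KerovOkounkovOlshanski}, along a regular path the $l$‑th factor converges to the value of the $\theta$‑extended Jack function $P^{o}_{\mu^{(l)}}$ at the limiting (suitably scaled) row and column frequencies $\alpha^{(l)},\beta^{(l)}$, and reassembling the three contributions reproduces precisely the kernel (\ref{LimitingMartinKernel}).

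Conversely, to see that every $\omega\in\Omega(G)$ yields a harmonic function it is cleanest to use the rescaled form: $\prod_l d_l^{\,|\lambda^{(l)}|}\,\mathbb{K}_\theta(\Lambda_n^{(k)},\omega)=\prod_l P^{o}_{\lambda^{(l)}}(\omega)$, and each one‑component factor satisfies the Jack $P$‑function Pieri rule with the power sum $p_1$ specialized to $p^{o}_{1,l}=\delta^{(l)}$ as in (\ref{pnol}); summing over components and using $\sum_l\delta^{(l)}=1$ recovers exactly the harmonicity equation for the pure product graph, so $\mathbb{K}_\theta(\cdot,\omega)$ is harmonic for every $\omega\in\Omega(G)$, and — being a ratio limit for an appropriate path — extreme. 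Injectivity of $\omega\mapsto\mathbb{K}_\theta(\cdot,\omega)$ follows by reading off the $\theta$‑extended power sums $p^{o}_{r,l}$ from the values of $\varphi$ on suitable multiple partitions and reconstructing $(\alpha^{(l)},\beta^{(l)},\delta^{(l)})$ from these moments by a standard moment‑problem uniqueness argument; continuity is immediate from the explicit kernel. Together with the Choquet‑simplex structure this gives both the existence and the uniqueness of the representing measure $P$.

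The main obstacle is the asymptotic analysis underlying the ratio limits: one must prove convergence of $\DIM_\theta(\Lambda_m^{(k)},\widetilde\Lambda_n^{(k)})/\DIM_\theta(\widetilde\Lambda_n^{(k)})$ along \emph{arbitrary} sequences going to infinity, not merely along regular ones, and in particular handle the degenerate components for which $\delta^{(l)}=0$ (there the component grows sublinearly, which forces $\alpha^{(l)}=\beta^{(l)}=0$ and makes the corresponding kernel factor degenerate to the indicator that $\lambda^{(l)}=\emptyset$, yet this case must still be treated carefully to confirm that the ergodic method produces nothing beyond $\Omega(G)$). Establishing that there are no ``exotic'' harmonic functions outside the range of the ergodic method — equivalently, that the Martin boundary of $\Gamma_\theta(G)$ coincides with its minimal boundary — is the conceptual counterpart of the same difficulty; as in the one‑component case, it rests on the multiplicative structure of the graph encoded by the Pieri‑type rule of Proposition~\ref{PropositionPieriRule}, which is precisely what lets one transport the one‑component result of \cite{KerovOkounkovOlshanski} through the product.
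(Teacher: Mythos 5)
Your outline reproduces several ingredients of the paper's argument correctly: the product formula you write for $\DIM_{\theta}\bigl(\Lambda_m^{(k)},\widetilde{\Lambda}_n^{(k)}\bigr)$ is exactly Proposition \ref{PROPOSITIONFORMULADIMENSIONFUNCTION}, the regular-sequence limit computation reassembles to the kernel (\ref{LimitingMartinKernel}), and your Pieri-rule verification that each $\mathbb{K}_{\theta}(\cdot,\omega)$ is harmonic (specializing $p_1\mapsto\delta^{(l)}$ and using $\sum_l\delta^{(l)}=1$) is sound and in fact cleaner than the paper's asymptotic derivation of the same coherency. But the route you choose — Choquet decomposition plus the Vershik--Kerov ergodic method — makes the whole proof rest on precisely the step you defer to your final paragraph and never resolve: convergence (or uniform control) of the ratios $\DIM_{\theta}(\widetilde{\Lambda}_m^{(k)},\Lambda_n^{(k)})/\DIM_{\theta}(\Lambda_n^{(k)})$ along \emph{arbitrary} sequences, including components of sublinear growth, and the exclusion of harmonic functions outside the proposed boundary. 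The paper never classifies extreme points; instead it proves a uniform estimate: by the Okounkov--Olshanski identity (\ref{3.7}) the Martin kernel equals $\frac{1}{n(n-1)\cdots(n-m+1)}\prod_l d_l^{-|\mu^{(l)}|}P^{\ast}_{\mu^{(l)}}\bigl(\lambda^{(l)};\theta\bigr)$, and Theorem \ref{Theorem4.1} (built on Theorem 7.1 of \cite{KerovOkounkovOlshanski}) gives $K_{\theta}(\widetilde{\Lambda}_m^{(k)},\Lambda_n^{(k)})=\prod_l d_l^{-|\mu^{(l)}|}P^{o}_{\mu^{(l)}}\bigl(\alpha(\Lambda_n^{(k)},n),\beta(\Lambda_n^{(k)},n),\delta(\Lambda_n^{(k)},n);\theta\bigr)+O(1/\sqrt{n})$ \emph{uniformly in} $\Lambda_n^{(k)}$. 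This uniformity turns the coherency identity (\ref{PropertyK1}) into $\mathcal{M}_m^{(k)}(\widetilde{\Lambda}_m^{(k)})=\DIM_{\theta}(\widetilde{\Lambda}_m^{(k)})\int_{\Omega(G)}\mathbb{K}_{\theta}(\widetilde{\Lambda}_m^{(k)},\omega)P^{(n)}(d\omega)+O(1/\sqrt{n})$ for the empirical measures $P^{(n)}$, and compactness of $\Omega(G)$ then yields a representing measure for an \emph{arbitrary} harmonic function, with the degenerate components $\delta^{(l)}=0$ handled automatically. Without an analogue of this uniform bound, your existence argument is incomplete: quoting the one-component Theorem B factorwise only covers regular paths, which is exactly the obstacle you name but do not remove.

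The uniqueness step as you set it up also has a gap. For Choquet uniqueness to give a bijection with probability measures on $\Omega(G)$ you need every kernel $\mathbb{K}_{\theta}(\cdot,\omega)$, $\omega\in\Omega(G)$, to be an \emph{extreme} harmonic function (and the extreme set to be identified with $\Omega(G)$); your justification, that each kernel ``is a ratio limit for an appropriate path,'' does not imply extremality — Martin-boundary points need not be minimal, which is the very issue you flag. Your moment-type argument only shows that $\omega\mapsto\mathbb{K}_{\theta}(\cdot,\omega)$ separates points of $\Omega(G)$; it does not show that distinct measures $P\neq P'$ give distinct harmonic functions. The paper sidesteps extremality entirely: by Proposition \ref{PROPOSITIONIMAGEDENSE} (Stone--Weierstrass) the functions $\omega\mapsto\mathbb{K}_{\theta}(\Lambda_n^{(k)},\omega)$ span a dense subspace of $C\left(\Omega(G)\right)$, so $P\mapsto\varphi$ is injective on measures. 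To complete your proof you would have to either import this density argument or actually prove that every $\mathbb{K}_{\theta}(\cdot,\omega)$ is extreme; as written, neither is done.
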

\begin{rem}
Note that the branching graph $\Gamma_{\theta}(G)$ depends on the labels of the $k$ irreducible characters of $G$.
\end{rem}
The proof of Theorem \ref{MAINTHEOREM} is given in Section \ref{SECTIONPROOFOFMAINTHEOREM}.
\subsection{Remarks on Theorem \ref{MAINTHEOREM} and related works}
\subsubsection{Potential theory on the branching graph $\Gamma_{\theta}(G)$} Harmonic functions on $\Gamma_{\theta}(G)$ can be described in terms of potential theory. Namely, consider an analogue $\triangle$ of the Laplace operator on $\Gamma_{\theta}(G)$ defined by its action on a function on $\Gamma_{\theta}(G)$ as
\begin{equation}
\left(\triangle f\right)\left(\Lambda_{n}^{(k)}\right)=-f\left(\Lambda_{n}^{(k)}\right)
+\sum\limits_{\widetilde{\Lambda}_{n+1}^{(k)}\in\Y_{n+1}^{(k)}}\Upsilon_{\theta}\left(\Lambda_n^{(k)},\widetilde{\Lambda}_{n+1}^{(k)}\right)f\left(\widetilde{\Lambda}_{n+1}^{(k)}\right).
\end{equation}
 Then each harmonic function $\varphi$ on $\Gamma_{\theta}(G)$ satisfies the Laplace equation
 $$
 \left(\triangle\varphi\right)\left(\Lambda_n^{(k)}\right)=0.
 $$
 Representation (\ref{HarmonicFunctionProbabilityMeasure}) is an analogue of the Poisson integral representation of nonnegative harmonic functions on the disk, see, for example,
 Garnett \cite{Garnett}, Chapter 1, Theorem 3.5. Moreover, the dimension function of $\Gamma_{\theta}(G)$ defined in Section \ref{SectionDimensionFunction} can be understood as the Green function of the operator $\triangle$. Indeed, set
 $$
 G\left(\Lambda_{m}^{(k)},\widetilde{\Lambda}_{n}^{(k)}\right)=
 \DIM_{\theta}\left(\Lambda_{m}^{(k)},\widetilde{\Lambda}_{n}^{(k)}\right),\;\; m\leq n.
 $$
 If
 $$
 f_{\widetilde{\Lambda}_{n}^{(k)}}\left(\Lambda_{m}^{(k)}\right)= G\left(\Lambda_{m}^{(k)},\widetilde{\Lambda}_{n}^{(k)}\right),
 $$
 then it can be verified that
 $$
 - \left(\triangle f_{\widetilde{\Lambda}_{n}^{(k)}}\right)\left(\Lambda_m^{(k)}\right)
 =\delta_{\widetilde{\Lambda}_{n}^{(k)},\Lambda_m^{(k)}},\;\;\forall\Lambda_{m}^{(k)}\in\Y_m^{(k)}.
 $$
 \subsubsection{The case $\theta=0$, $k=1$} The result in this case is equivalent to the Kingman representation theorem for partition structures, see Kingman \cite{Kingman2}.
 The graph $\Gamma_{\theta}(G)$ is the Kingman graph in this case.
 \subsubsection{The case $\theta=1$, $k=1$}  If $\theta=1$ and $k=1$ (i.e. there is only one conjugacy class in $G$)
 then Theorem \ref{MAINTHEOREM} gives a representation of the harmonic functions on the Young graph $\Y$. This representation determines a bijective correspondence between coherent systems of probability measures on $\Y$ and probability measures on the Thoma set.  For different proofs of this result, and for discussion of its relation with the Thoma theorem \cite{Thoma} on characters of the infinite symmetric group we refer the reader to the book by Borodin and Olshanski
 \cite{BorodinOlshanskiBook}, and to Vershik and Kerov \cite{VershikKerov1}.
 \subsubsection{The case of an arbitrary Jack parameter $\theta\geq 0$, and of $k=1$}
 The result in this case is due to Kerov, Okounkov, and Olshanski \cite{KerovOkounkovOlshanski}. Our proof of Theorem \ref{MAINTHEOREM} adopts the methods of Ref. \cite{KerovOkounkovOlshanski} to the case of a general $k$ (i.e. to the case of an arbitrary finite group  $G$).
 \subsubsection{The case $\theta=1$, $k$ is an arbitrary strictly positive integer}
 In this case Theorem \ref{MAINTHEOREM} is reduced to the result first obtained
 by  Hora and  Hirai \cite{HoraHirai}, see
 Theorem 3.1. in Ref. \cite{HoraHirai}. If $\theta=1$, then the branching graph $\Gamma_{\theta=1}(G)$ is that of the inductive system of wreath products $G\sim S(n)$, and the generalized Thoma set $\Omega(G)$ is the Martin boundary of $\Gamma_{\theta=1}(G)$. The proof in  Hora and  Hirai \cite{HoraHirai} relies on an explicit formula for the irreducible characters of the wreath products, see
 equation (1.8) in Hora and Hirai  \cite{HoraHirai}. In the general case
 where $\theta\geq 0$, and $k$ is  arbitrary, the relation with representation theory of wreath products is lost, and the present paper uses a different approach to prove Theorem \ref{MAINTHEOREM}.
\section{A multiple partition structure related to the wreath products $G\sim S(n)$}
\label{SectionExampleOfMPS}
In this Section we use
probability measures on the wreath products $G\sim S(n)$ to
 construct an example of a multiple partition structure.
\subsection{The wreath product $G\sim S(n)$}
 The wreath product
$G\sim S(n)$ is the group whose underlying set is
$$
\left\{\left(\left(g_1,\ldots,g_n\right),s\right):\;g_i\in G, s\in S(n)\right\},
$$
where $G$ is a finite group, and $S(n)$ is the symmetric group. The multiplication in
$G\sim S(n)$ is defined by
$$
\left(\left(g_1,\ldots,g_n\right),s\right)\left(\left(h_1,\ldots,h_n\right),t\right)
=\left(\left(g_1h_{s^{-1}(1)},\ldots,g_nh_{s^{-1}(n)}\right),st\right).
$$
When $n=1$, $G\sim S(1)$ is isomorphic to $G$. The order of $G\sim S(n)$ is $|G|^nn!$.
The important fact is that both \textit{the conjugacy classes and the irreducible representations of $G\sim S(n)$ are parameterized by multiple partitions $\Lambda_n^{(k)}=\left(\lambda^{(1)},\ldots,\lambda^{(k)}\right)$}, where $k$ is the number of conjugacy classes in $G$, and where $|\lambda^{(1)}|+\ldots+|\lambda^{(k)}|=n$,  see Macdonald \cite{Macdonald}, Appendix B.

Assume that
$$
x=\left(\left(g_1,\ldots,g_n\right),s\right) \in G\sim S(n),
$$
and let $c_1$, $\ldots$, $c_k$ be the conjugacy classes of $G$. Write $s\in S(n)$ as a product of disjoint cycles. If $\left(i_1,\ldots,i_r\right)$ is one of the cycles of $s$, the element $g_{i_r}g_{i_{r-1}}\ldots g_{i_1}$ is called the cycle-product of $x$ corresponding to $\left(i_1,\ldots,i_r\right)$. If $g_{i_r}g_{i_{r-1}}\ldots g_{i_1}\in c_j$, then $\left(i_1,\ldots,i_r\right)$ is called of type $c_j$.
Denote by $m_r\left(c_j\right)$ (where $r\geq 1$ and $j=1,\ldots,k$) the number of $r$-cycles in $s$ of type $c_j$.
Write
$$
\lambda^{(j)}=\left(1^{m_1\left(c_j\right)}2^{m_2\left(c_j\right)}...n^{m_n\left(c_j\right)}\right),
$$
where $j=1,\ldots,k$.  This means that exactly $m_r\left(c_j\right)$ rows of
$\lambda^{(j)}$ are equal to $r$. Then $\Lambda_n^{(k)}=\left(\lambda^{(1)},\ldots,\lambda^{(k)}\right)$
is a multiple partition of $n$. This multiple partition determines the conjugacy class of $x$ in $G\sim S(n)$.

For example, assume that $G=S(3)$, and $n=3$. Denote the conjugacy classes of $G=S(3)$ by $c_1$, $c_2$, and $c_3$. Let us agree that $c_1$ is parameterized by the partition $(1,1,1)$, $c_2$ is parameterized by the partition $(2,1)$, and $c_3$ is parameterized by the partition $(3)$.
Set $s=(13)(2)\in S(3)$, and $g_1=(132)\in S(3)$, $g_2=(123)\in S(3)$,
$g_3=(1)(23)\in S(3)$. Then
$$
\left(\left(g_1,g_2,g_3\right),s\right)\in S(3)\sim S(3).
$$
Since $g_3g_1=(12)(3)\in c_2$, the cycle $(13)$ of $s$ is of type $c_2$. Since
$g_2=(123)\in c_3$, the cycle $(2)$ of $s$ is of type $c_3$. We obtain
$$
\lambda^{(1)}=\left(\emptyset\right),\;\;\lambda^{(2)}=\left(2\right), \lambda^{(3)}=(1).
$$
The multiple partition $\Lambda_3^{(3)}=\left(\lambda^{(1)},\lambda^{(2)},\lambda^{(3)}\right)$
determines the conjugacy class of $x$ in $S(3)\sim S(3)$.

Note that the parameterization of conjugacy classes of $G\sim S(n)$ depends on the labeling of conjugacy classes $c_1,\ldots,c_k$ of $G$.

 The number of elements  in the conjugacy class $K_{\Lambda_n^{(k)}}$ of $G\sim S(n)$ parameterized by
$\Lambda_n^{(k)}=\left(\lambda^{(1)},\ldots,\lambda^{(k)}\right)$ is given by
\begin{equation}\label{TheSizeOfTheConjugacyClass}
\left|K_{\Lambda_n^{(k)}}\right|=\frac{n!|G|^n}{\prod\limits_{l=1}^k\prod\limits_{j=1}^nj^{r_j\left(\lambda^{(l)}\right)}
\left(r_j\left(\lambda^{(l)}\right)\right)!}\;\frac{1}{\prod\limits_{l=1}^k\zeta_{c_l}^{l\left(\lambda^{(l)}\right)}},
\end{equation}
where $r_j\left(\lambda^{(l)}\right)$ denotes the number of rows of length $j$ in the Young diagram $\lambda^{(l)}$
(in particular, the sum $r_1\left(\lambda^{(l)}\right)+\ldots+r_n\left(\lambda^{(l)}\right)$ is equal to the total number of rows in $\lambda^{(l)}$),
$l\left(\lambda^{(l)}\right)$ denotes the number of rows in $\lambda^{(l)}$, and $\zeta_{c_l}=\frac{|G|}{|c_l|}$.
\subsection{The Ewens probability measure on $G\sim S(n)$}

\begin{prop}\label{PropositionEwensWreathProduct} Fix $t_1>0$, $\ldots$, $t_k>0$, and set
\begin{equation}\label{EwensMeasureWreathProduct}
P_{t_1,\ldots,t_k;n}^{\Ewens}(x)=\frac{t_1^{[x]_{c_1}}t_2^{[x]_{c_2}}\ldots t_k^{[x]_{c_k}}}{
|G|^n\left(\frac{t_1}{\zeta_{c_1}}+\ldots+\frac{t_k}{\zeta_{c_k}}\right)_n},\;
x=\left((g_1,\ldots,g_n),s\right)\in G\sim S(n),
\end{equation}
where $c_1$, $\ldots$, $c_k$ are the conjugacy classes of $G$, $[x]_{c_l}$ is the number of cycles of type $c_l$ in $s$, $\zeta_{c_l}=\frac{|G|}{|c_l|}$, and $(a)_n$ is the Pochhammer symbol,
$(a)_n=a(a+1)...(a+n-1)$. Each
$P_{t_1,\ldots,t_k;n}^{\Ewens}$ is a probability measure on $G\sim S(n)$.
\end{prop}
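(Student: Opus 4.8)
The plan is to verify the two defining properties of a probability measure. Nonnegativity of $P_{t_1,\ldots,t_k;n}^{\Ewens}(x)$ is immediate: all the $t_l$ are positive, and since each $\zeta_{c_l}=|G|/|c_l|>0$ the Pochhammer symbol $\left(\sum_{l=1}^k t_l/\zeta_{c_l}\right)_n$ is a product of positive numbers. So the content is to show $\sum_{x\in G\sim S(n)}P_{t_1,\ldots,t_k;n}^{\Ewens}(x)=1$. The key observation is that the weight depends on $x=\left((g_1,\ldots,g_n),s\right)$ only through its conjugacy class: on the class $K_{\Lambda_n^{(k)}}$ parameterized by $\Lambda_n^{(k)}=\left(\lambda^{(1)},\ldots,\lambda^{(k)}\right)$ one has $[x]_{c_l}=l\!\left(\lambda^{(l)}\right)$ for each $l$. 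I would therefore group the sum over conjugacy classes, insert the class-size formula (\ref{TheSizeOfTheConjugacyClass}), and observe that the factors $|G|^n$ cancel. Writing $u_l=t_l/\zeta_{c_l}$, the total mass reduces to
\[
\frac{n!}{\left(u_1+\cdots+u_k\right)_n}\;\sum_{\Lambda_n^{(k)}\in\Y_n^{(k)}}\;\prod_{l=1}^k\frac{u_l^{\,l(\lambda^{(l)})}}{\prod_{j\ge 1}j^{\,r_j(\lambda^{(l)})}\,\bigl(r_j(\lambda^{(l)})\bigr)!}\,,
\]
where $r_j(\lambda^{(l)})$ is the number of rows of length $j$ in $\lambda^{(l)}$.

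It then remains to prove the Ewens-type identity
\[
\sum_{\Lambda_n^{(k)}\in\Y_n^{(k)}}\;\prod_{l=1}^k\frac{u_l^{\,l(\lambda^{(l)})}}{\prod_{j\ge 1}j^{\,r_j(\lambda^{(l)})}\,\bigl(r_j(\lambda^{(l)})\bigr)!}=\frac{\left(u_1+\cdots+u_k\right)_n}{n!}\,,
\]
which is the normalization of the multivariate Ewens sampling formula. I would establish it by a generating-function argument. A multiple partition of $n$ with $k$ components is the same datum as a family of nonnegative integers $\bigl(r_j^{(l)}\bigr)_{1\le l\le k,\,j\ge 1}$ with $\sum_{l,j}j\,r_j^{(l)}=n$, and $l(\lambda^{(l)})=\sum_{j\ge1}r_j^{(l)}$; hence the exponential generating function in a variable $z$ of the left-hand side over $n\ge 0$ factors as
\[
\prod_{l=1}^k\prod_{j\ge 1}\;\sum_{r\ge 0}\frac{\bigl(u_l z^j/j\bigr)^r}{r!}=\exp\!\left(\sum_{l=1}^k u_l\sum_{j\ge 1}\frac{z^j}{j}\right)=(1-z)^{-(u_1+\cdots+u_k)}\,,
\]
and the coefficient of $z^n$ on the right is $\left(u_1+\cdots+u_k\right)_n/n!$ by the binomial series. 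Substituting into the displayed reduction gives total mass $1$.

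An alternative route, which sidesteps (\ref{TheSizeOfTheConjugacyClass}), is to sum first over $(g_1,\ldots,g_n)\in G^n$ for fixed $s\in S(n)$: for a cycle of $s$ of length $\ell$, as the $\ell$ relevant coordinates range over $G^\ell$ the corresponding cycle-product takes each value of $G$ exactly $|G|^{\ell-1}$ times, so the sum of $t_1^{[x]_{c_1}}\cdots t_k^{[x]_{c_k}}$ over $G^n$ equals $|G|^{n-\kappa(s)}\bigl(\sum_{l=1}^k|c_l|\,t_l\bigr)^{\kappa(s)}=|G|^{n}\bigl(u_1+\cdots+u_k\bigr)^{\kappa(s)}$, where $\kappa(s)$ is the number of cycles of $s$ and we used $\sum_l|c_l|\,t_l=|G|\sum_l u_l$. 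The total mass then becomes $\bigl(u_1+\cdots+u_k\bigr)_n^{-1}\sum_{s\in S(n)}\bigl(u_1+\cdots+u_k\bigr)^{\kappa(s)}$, which is $1$ by the classical identity $\sum_{s\in S(n)}T^{\kappa(s)}=(T)_n$ (the generating function of the unsigned Stirling numbers of the first kind), proved by an easy induction on $n$.

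There is no genuine obstacle. I would present the first route, since it directly exploits the just-established class-size formula, and record the identity of the second paragraph as a short lemma. The only points needing care are the bookkeeping that $[x]_{c_l}=l(\lambda^{(l)})$ on $K_{\Lambda_n^{(k)}}$, and --- in the alternative route --- the $|G|^{\ell-1}$-to-one count for cycle-products, which holds because left multiplication by a fixed group element is a bijection of $G$.
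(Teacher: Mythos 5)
Your proposal is correct, and your first route follows the same skeleton as the paper's proof: the paper likewise observes that the weight is a class function, groups the sum over conjugacy classes of $G\sim S(n)$, and inserts the class-size formula (\ref{TheSizeOfTheConjugacyClass}). Where you diverge is in the final normalization step. The paper does not prove the multivariate Ewens identity directly; instead it recognizes the resulting sum as a mixture of the classical one-component Ewens measures $M^{\Ewens}_{t_l/\zeta_{c_l},\,m_l}$ (each of total mass $1$ on $\Y_{m_l}$, a known fact) and then invokes the Vandermonde--Chu type identity (\ref{ProbSum}), cited to Shiryaev, to sum over the compositions $m_1+\cdots+m_k=n$. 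You instead prove the full normalization
$\sum_{\Lambda_n^{(k)}}\prod_{l}u_l^{\,l(\lambda^{(l)})}\bigl(\prod_j j^{r_j(\lambda^{(l)})}r_j(\lambda^{(l)})!\bigr)^{-1}=(u_1+\cdots+u_k)_n/n!$
in one stroke via the exponential generating function $(1-z)^{-(u_1+\cdots+u_k)}$; this is self-contained and subsumes both ingredients the paper quotes. Your alternative route is genuinely different from the paper: by summing over $(g_1,\ldots,g_n)\in G^n$ for fixed $s$ and using the $|G|^{\ell-1}$-to-one count for cycle-products, you bypass the class-size formula entirely and reduce everything to the classical identity $\sum_{s\in S(n)}T^{\kappa(s)}=(T)_n$ with $T=u_1+\cdots+u_k$; this is more elementary, though it connects less directly to the measure $\mathcal{M}^{\Ewens}_{t_1,\ldots,t_k;n}$ on multiple partitions that the paper needs immediately afterwards, which is a side benefit of the paper's (and your first) route.
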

Proposition \ref{PropositionEwensWreathProduct} will be proved in Section \ref{ProofEwensProbMeasure}.

The measure $P_{t_1,\ldots,t_k;n}^{\Ewens}$ is called the \textit{Ewens probability measure} on $G\sim S(n)$. If $G$ contains only the identity element,
then $k=1$, and $P_{t_1,\ldots,t_k;n}^{\Ewens}$ turns into the Ewens probability measure on the symmetric group. We refer the reader to the paper by Olshanski \cite{OlshanskiRandomPermutations}
where the importance and different properties of the Ewens probability measures on the symmetric groups are explained. In particular, the consistency of the Ewens probability measures under the canonical projections $p_{n,n-1}: S(n)\rightarrow
S(n-1)$ makes it possible to build an ``$n=\infty$'' versions of these measures
on the space of virtual permutations, see Kerov, Olshanski, and Vershik \cite{KerovOlshanskiVershik}. In spite of the fact that the Ewens probability measures were studied by many researchers, the Ewens measures  $P_{t_1,\ldots,t_k;n}^{\Ewens}$ on the wreath products
were not previously considered in the literature, to the best of the author's knowledge.
\subsection{The multiple partition structure $\left(\mathcal{M}_{t_1,\ldots,t_k;n}^{\Ewens}\right)_{n=1}^{\infty}$}
To an element $x\in G\sim S(n)$ we assign a multiple partition $\Lambda_n^{(k)}\in\Y_n^{(k)}$ describing the conjugacy class of $x$. Since $P_{t_1,\ldots,t_k;n}^{\Ewens}(x)$ is invariant under action of $G\sim S(n)$ on itself by conjugations, the projection
$
x\rightarrow \Lambda_n^{(k)}
$
takes $P_{t_1,\ldots,t_k;n}^{\Ewens}$ to a probability measure  $\mathcal{M}_{t_1,\ldots,t_k;n}^{\Ewens}$ on the set of multiple partitions $\Y_n^{(k)}$.  This measure  will be called the \textit{Ewens probability measure on multiple partitions}.
In order to write $\mathcal{M}^{\Ewens}_{t_1,\ldots,t_k;\;n}$ explicitly set
\begin{equation}
T_1=\frac{t_1}{\zeta_{c_1}},\ldots, T_k=\frac{t_k}{\zeta_{c_k}}.
\end{equation}
Then $\mathcal{M}^{\Ewens}_{t_1,\ldots,t_k;\;n}\left(\Lambda_n^{(k)}\right)$ can be written as
\begin{equation}\label{MEUSE}
\mathcal{M}^{\Ewens}_{t_1,\ldots,t_k;\;n}\left(\Lambda_n^{(k)}\right)
=\frac{\left(T_1\right)_{|\lambda^{(1)}|}\ldots\left(T_k\right)_{|\lambda^{(k)}|}}{
\left(T_1+\ldots+T_k\right)_n}\frac{n!}{|\lambda^{(1)}|!\ldots |\lambda^{(k)}|!}
M^{\Ewens}_{T_1,|\lambda^{(1)}|}\left(\lambda^{(1)}\right)\ldots M^{\Ewens}_{T_k,|\lambda^{(k)}|}\left(\lambda^{(k)}\right),
\end{equation}
where $\Lambda_n^{(k)}=\left(\lambda^{(1)},\ldots,\lambda^{(k)}\right)\in\Y_n^{(k)}$, and  $M^{\Ewens}_{T_l,|\lambda^{(l)}|}\left(\lambda^{(l)}\right)$, $1\leq l\leq k$, are defined by equation
\begin{equation}\label{TheEwensMeasureOnPartitions}
M_{t,n}^{\Ewens}(\lambda)=\frac{n!}{\prod_{j=1}^nj^{r_j(\lambda)}r_j(\lambda)!}
\;
\frac{t^{l(\lambda)}}{(t)_n}.
\end{equation}
Here $\lambda=\left(1^{r_1(\lambda)}2^{r_2(\lambda)}\ldots n^{r_n(\lambda)}\right)\in\Y_n$.
It is well-known that $\left(M_{t,n}^{\Ewens}(\lambda)\right)_{n=1}^{\infty}$
is a partition structure in the sense of Kingman \cite{Kingman1}.
Moreover, the measure $M_{t,n}^{\Ewens}$ has an important interpretation in the models
of population genetics for the genetic variation of a sample of gametes from a large population, see Kingman \cite{Kingman2} and  the references therein.  Namely, assume that a sample of $n$ gametes is taken from a population, and it is classified according to  one particular gene.
Denote by $P_{n}\left(r_1, r_2,\ldots,r_n\right)$ the probability that there
are $r_1$ alleles represented once in the sample, $r_2$ represented twice in the sample, and so on. It was shown in the works by Ewens \cite{Ewens}, Karlin and McGregor \cite{KarlinMcGregor}
that under suitable conditions this probability is given by
\begin{equation}\label{EwensSamplingFormula}
P_{n}\left(r_1, r_2,\ldots,r_n\right)=M_{t,n}^{\Ewens}(\lambda),\;\;
\lambda=\left(1^{r_1}2^{r_2}\ldots n^{r_n}\right).
\end{equation}
Equation (\ref{EwensSamplingFormula}) is known as the Ewens sampling formula.
We refer the reader to the paper by Tavar$\acute{\mbox{e}}$ \cite{Tavare} and to the book by Arratia,
Barbour, and Tavar$\acute{\mbox{e}}$ \cite{ArratiaBarbourTavare}  where
numerous applications of the Ewens sampling formula are described.
As $k=1$ the measure $\mathcal{M}^{\Ewens}_{t_1,\ldots,t_k;\;n}$ defined by
equation (\ref{MEUSE}) turns into $M_{t_1,n}^{\Ewens}$.
The author of the present paper expects that the methods from works
by Ewens \cite{Ewens},  Karlin and McGregor \cite{KarlinMcGregor} can be applied to show that
the distribution $\mathcal{M}^{\Ewens}_{t_1,\ldots,t_k;\;n}$  defines a realistic model
for the allelic partition in the situation where $n$ gametes in a sample
from a very large population are classified according to the $k$ particular genes, see
discussion in Section \ref{SectionIntroductionMultiplePartitionStructures}.
As it was explained in Section \ref{SectionIntroductionMultiplePartitionStructures},
a necessary condition for a measure $\mathcal{M}_n^{(k)}$ to define a model for
the allelic partition is that the sequence of distributions $\left(\mathcal{M}_n\right)_{n=1}^{\infty}$
is a multiple partition structure in the sense of Definition \ref{DEFINITIONMULTIPLEPARTITIONSTRUCTURE}. Here we show that this is indeed the case for the sequence $\left(\mathcal{M}_{t_1,\ldots,t_k;n}^{\Ewens}
\right)_{n=1}^{\infty}$.
\begin{prop}\label{PropositionEwensMultiplePartitionStructure}
The sequence $\left(\mathcal{M}_{t_1,\ldots,t_k;n}^{\Ewens}
\right)_{n=1}^{\infty}$ is a multiple partition structure.
\end{prop}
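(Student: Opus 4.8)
The plan is to verify the consistency condition of Definition~\ref{DEFINITIONMULTIPLEPARTITIONSTRUCTURE} directly, exploiting the product form of the measure in equation~(\ref{MEUSE}). Write $T=T_1+\cdots+T_k$, and for a composition $m=(m_1,\ldots,m_k)$ with $m_1+\cdots+m_k=n$ set
\[
\pi_n(m)=\binom{n}{m_1,\ldots,m_k}\,\frac{(T_1)_{m_1}\cdots(T_k)_{m_k}}{(T)_n},
\]
the Dirichlet--multinomial (P\'olya urn) weight. Equation~(\ref{MEUSE}) says precisely that a random multiple partition $\Lambda_n^{(k)}\sim\mathcal{M}_{t_1,\ldots,t_k;n}^{\Ewens}$ can be sampled by first drawing the vector of component sizes $m=(|\lambda^{(1)}|,\ldots,|\lambda^{(k)}|)$ from $\pi_n$ and then drawing $\lambda^{(l)}$ from the Ewens partition measure $M_{T_l,m_l}^{\Ewens}$ independently over $l$.

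I would then unwind the operation ``delete a ball uniformly at random'' on $\Y_n^{(k)}$. The $n$ balls split as $|\lambda^{(1)}|+\cdots+|\lambda^{(k)}|$ among the $k$ types, so the deleted ball has type $l$ with probability $|\lambda^{(l)}|/n$; conditionally on its type being $l$ it is a uniform ball among the $|\lambda^{(l)}|$ balls arranged into the rows of $\lambda^{(l)}$, and deleting it is exactly Kingman's single-partition down-step applied to $\lambda^{(l)}$ (a row of length $j$ is shortened to $j-1$ with probability proportional to $jr_j(\lambda^{(l)})$, an emptied row being discarded). Write $q_m(\lambda,\mu)$ for this down transition probability, from a partition $\lambda$ of $m$ to a partition $\mu$ of $m-1$. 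The two inputs are: (i) $(M_{t,m}^{\Ewens})_{m\ge1}$ is a partition structure --- recalled in the paragraph following~(\ref{TheEwensMeasureOnPartitions}) --- which is the identity $\sum_{\lambda}M_{t,m}^{\Ewens}(\lambda)\,q_m(\lambda,\mu)=M_{t,m-1}^{\Ewens}(\mu)$; and (ii) the P\'olya urn is consistent under deletion of a uniform ball, i.e. $\sum_{l=1}^k\pi_n(m'+e_l)\,\frac{m'_l+1}{n}=\pi_{n-1}(m')$ for every composition $m'$ of $n-1$, with $e_l$ the $l$-th standard basis vector.

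To assemble these, I would fix a target $\Lambda_{n-1}^{(k)}=(\mu^{(1)},\ldots,\mu^{(k)})$ with size vector $m'=(m'_1,\ldots,m'_k)$, $\sum_l m'_l=n-1$. Since one deletion changes exactly one component, conditioning on the component $l$ in which the deleted ball lay forces the size vector of $\Lambda_n^{(k)}$ to be $m'+e_l$, and the probability that deletion applied to $\mathcal{M}_{t_1,\ldots,t_k;n}^{\Ewens}$ produces $\Lambda_{n-1}^{(k)}$ equals
\[
\sum_{l=1}^{k}\pi_n(m'+e_l)\,\frac{m'_l+1}{n}\Bigl(\sum_{\lambda^{(l)}}M_{T_l,m'_l+1}^{\Ewens}(\lambda^{(l)})\,q_{m'_l+1}(\lambda^{(l)},\mu^{(l)})\Bigr)\prod_{j\ne l}M_{T_j,m'_j}^{\Ewens}(\mu^{(j)}).
\]
By input (i) the inner sum collapses to $M_{T_l,m'_l}^{\Ewens}(\mu^{(l)})$, so the whole expression becomes $\bigl(\sum_{l=1}^k\pi_n(m'+e_l)\tfrac{m'_l+1}{n}\bigr)\prod_{j=1}^k M_{T_j,m'_j}^{\Ewens}(\mu^{(j)})$; by input (ii) the bracket is $\pi_{n-1}(m')$, and comparison with~(\ref{MEUSE}) at level $n-1$ identifies this with $\mathcal{M}_{t_1,\ldots,t_k;n-1}^{\Ewens}(\Lambda_{n-1}^{(k)})$, as required. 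Input (ii) is itself a one-line Pochhammer computation, from $\binom{n}{m'_1,\ldots,m'_l+1,\ldots,m'_k}\frac{m'_l+1}{n}=\binom{n-1}{m'_1,\ldots,m'_k}$, $(T_l)_{m'_l+1}=(T_l)_{m'_l}(T_l+m'_l)$, $\sum_l(T_l+m'_l)=T+n-1$ and $(T)_n=(T)_{n-1}(T+n-1)$; alternatively it is just the exchangeability of the P\'olya urn sequence.

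Everything here is routine once~(\ref{MEUSE}) is rewritten in the mixture form above, so the hard part is really the bookkeeping in the third step: recognizing that a single deletion touches exactly one component, and that the image measure must again factor as a P\'olya urn over component sizes times independent Ewens partition measures, which is what lets inputs (i) and (ii) be applied in succession. As an alternative, more conceptual route one could instead check that the measures $P_{t_1,\ldots,t_k;n}^{\Ewens}$ on $G\sim S(n)$ are consistent under a natural projection $G\sim S(n)\to G\sim S(n-1)$ that forgets the $n$-th coordinate, and that this projection intertwines with ball-deletion at the level of conjugacy-class types; but the direct argument above is shorter and self-contained given the results already proved in the paper.
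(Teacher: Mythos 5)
Your proof is correct, and it takes a genuinely more direct route than the paper. The paper never verifies the deletion condition for $\mathcal{M}^{\Ewens}_{t_1,\ldots,t_k;n}$ by hand: it first proves a general statement (Proposition \ref{PropositionGenralHarmonic} and Corollary \ref{CorollaryConstructionCoherentMeasures}) that any $k$ coherent systems on the Jack graph, combined with the Pochhammer weights $\frac{(\tau_1)_{|\lambda^{(1)}|}\cdots(\tau_k)_{|\lambda^{(k)}|}}{(\tau_1+\cdots+\tau_k)_n}\frac{n!}{|\lambda^{(1)}|!\cdots|\lambda^{(k)}|!}$, form a coherent system on $\Gamma_{\theta}(G)$ for arbitrary $\theta\geq 0$, then specializes to $\theta=0$, invokes the known fact that $\left(M^{\Ewens}_{t,n}\right)_{n\geq 1}$ is coherent on the Kingman graph, and finally translates coherence into the ball-deletion condition via Proposition \ref{PropCoRStructuresHarmonicFunctions} (which itself rests on the dimension formula of Proposition \ref{PROPOSITIONFORMULADIMENSIONFUNCTION}). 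You bypass the branching-graph formalism entirely: you read (\ref{MEUSE}) as a Dirichlet--multinomial mixture over the size vector of independent one-component Ewens measures, condition on the type of the deleted ball, and use exactly two inputs -- the Kingman partition-structure property of the one-dimensional Ewens family (the same external input the paper uses) and the P\'olya-urn consistency $\sum_l\pi_n(m'+e_l)\frac{m'_l+1}{n}=\pi_{n-1}(m')$. That Pochhammer identity, via $(T_l)_{m'_l+1}=(T_l)_{m'_l}(T_l+m'_l)$ and $(T)_n=(T)_{n-1}(T+n-1)$, is precisely the computation hidden inside the paper's proof of Proposition \ref{PropositionGenralHarmonic}, so the analytic core is the same; what differs is the packaging. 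Your argument is shorter, self-contained, and needs neither $\DIM_{\theta=0}$ nor Proposition \ref{PropCoRStructuresHarmonicFunctions}, and it in fact proves the slightly more general fact that Dirichlet--multinomial mixtures of arbitrary Kingman partition structures are multiple partition structures; what the paper's detour buys is the statement at all Jack parameters $\theta\geq 0$ (coherent systems on $\Gamma_{\theta}(G)$ from coherent systems on the Jack graph), which is of independent use there. Your closing remark about consistency of $P^{\Ewens}_{t_1,\ldots,t_k;n}$ under a projection $G\sim S(n)\to G\sim S(n-1)$ would need real work to make precise (one must check that forgetting a coordinate intertwines correctly with cycle types and uniform ball deletion), but since you do not rely on it, this does not affect the proof.
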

For a proof of Proposition \ref{PropositionEwensMultiplePartitionStructure} we refer the reader to  Section \ref{SectionProofMewensMultiplePartitionStructure}.

The representation theorem for multiple partition structures (see Theorem \ref{THEOREMIBIJECTIVEMULTIPLEPARTITIONSTRUCTURES} above)   gives
\begin{equation}\label{MeasuresPartitionsMeasuresSymplex}
\mathcal{M}_{t_1,\ldots,t_k;n}^{\Ewens}\left(\Lambda_n^{(k)}\right)=
\int_{\overline{\nabla}^{(k)}}
\mathbb{K}\left(\Lambda_n^{(k)},\omega\right)P^{\Ewens}_{t_1,\ldots,t_k}(d\omega),
\end{equation}
where $P^{\Ewens}_{t_1,\ldots,t_k}$ is a probability measure on $\overline{\nabla}^{(k)}$. As $k=1$,
the probability measure $P^{\Ewens}_{t_1}$ on $\overline{\nabla}^{(k=1)}$
assigned by formula (\ref{MeasuresPartitionsMeasuresSymplex}) to
$M_{t_1,n}^{\Ewens}$ is known as the Poisson-Dirichlet distribution $PD(t_1)$, see
Kingman \cite{Kingman1}. For a comprehensive account of the theory of the \textit{Poisson-Dirichlet distribution}
we refer the reader to the book by Feng \cite{Feng}, and to the references therein.
Here we recall that  the Poisson-Dirichlet distribution $PD(t)$ can be understood as the Poisson-Dirichlet limit of the Dirichlet distribution $D(T_1,\ldots,T_M)$ with density
\begin{equation}\label{PoissonDirichletDensity}
\frac{\Gamma\left(T_1+\ldots+T_M\right)}{\Gamma\left(T_1\right)\ldots\Gamma\left(T_M\right)}
x_1^{T_1-1}x_2^{T_2-1}\ldots x_M^{T_M-1}
\end{equation}
relative to the $(M-1)$- dimensional Lebesgue measure on the simplex
$$
\triangle_M=\left\{(x_1,\ldots,x_M):\; x_i\geq 0,\; x_1+\ldots+x_M=1\right\},
$$
where $T_1$, $\ldots$, $T_M$ are strictly positive parameters.
Assume that $(x_1,\ldots,x_M)$ has the Dirichlet distribution with equal parameters,
$$
T_1=\ldots=T_M=\frac{t}{M-1}.
$$
If
$
x_{(1)}\geq x_{(2)}\geq\ldots\geq x_{(M)}
$
denote the $x_j$ arranged in descending order, then $x_{(1)}$, $x_{(2)}$, $\ldots$ converge in joint distribution
as $M\rightarrow\infty$, the limit is $PD(t)$. The Poisson-Dirichlet distribution
$PD(t)$ is concentrated on the set
\begin{equation}\label{nablazero}
\overline{\nabla}_0^{(1)}=\left\{x=\left(x_1,x_2,\ldots,\right):\;
x_1\geq x_2\geq\ldots\geq 0,\;\sum\limits_{i=1}^{\infty}x_i=1\right\}.
\end{equation}

An interesting problem is to
describe the probability measure $P^{\Ewens}_{t_1,\ldots,t_k}$ for general $k=1,2,\ldots$.
We will see that $P^{\Ewens}_{t_1,\ldots,t_k}$ can be understood as \textit{the multiple Poisson-Dirichlet distribution}.

\subsection{ The multiple Poisson-Dirichlet distribution. The representation theorem for
$\left(\mathcal{M}_{t_1,\ldots,t_k;n}^{\Ewens}
\right)_{n=1}^{\infty}$.}
Let $t_1>0$, $\ldots$, $t_k>0$.  For each $l$, $1\leq l\leq k$, let $x^{(l)}=\left(x^{(l)}_1,x^{(l)}_2,\ldots \right)$
be independent sequences of random variables such that
$$
x^{(l)}\sim PD(t_l),\; l=1,\ldots, k.
$$
Furthermore, let  $\delta^{(1)}$, $\ldots$, $\delta^{(k)}$ be random variables independent of $x^{(1)}$, $\ldots$, $x^{(k)}$, and such that joint distribution of
 $\delta^{(1)}$, $\ldots$, $\delta^{(k)}$ is the Dirichlet distribution $D\left(t_1,\ldots,t_k\right)$.
The joint distribution of the sequences
$\delta^{(1)}x^{(1)}$, $\ldots$, $\delta^{(k)}x^{(k)}$ is called the \textit{multiple Poisson-Dirichlet distribution}
$PD(t_1,\ldots,t_k)$.\\
The distribution $PD(t_1,\ldots,t_k)$ is concentrated on
\begin{equation}\label{GeneralizedThomaSet}
\begin{split}
\overline{\nabla}_0^{(k)}=&\biggl\{(x,\delta)\biggr|
x=\left(x^{(1)},\ldots,x^{(k)}\right),
\delta=\left(\delta^{(1)},\ldots,\delta^{(k)}\right);\\
&x^{(l)}=\left(x^{(l)}_1,x^{(l)}_2,\ldots\right),
x_1^{(l)}\geq x_2^{(l)}\geq\ldots\geq 0,\; 1\leq l\leq k,\\
&
\delta^{(1)}\geq 0,\ldots,\delta^{(k)}\geq 0,\\
&\mbox{where}\;\;\sum\limits_{i=1}^{\infty}x_i^{(l)}=\delta^{(l)},\; 1\leq l\leq k,\;\mbox{and}\;\sum\limits_{l=1}^k\delta^{(l)}=1\biggl\}.
\end{split}
\nonumber
\end{equation}
 If $k=1$, the multiple Poisson-Dirichlet distribution turns into the usual Poisson-Dirichlet distribution $PD(t_1)$.
\subsubsection{The representation theorem for
$\left(\mathcal{M}_{t_1,\ldots,t_k;n}^{\Ewens}
\right)_{n=1}^{\infty}$.}
In Section \ref{SectionProofREwensRepresentation} we prove the following Theorem:
\begin{thm}\label{THEOREMREPRESENTATIONWITHMULTIPLEPDD}
The multiple partition structure $\left(\mathcal{M}^{\Ewens}_{t_1,\ldots,t_k;\;n}\right)_{n=1}^{\infty}$  has the  representation
\begin{equation}
\begin{split}
&\mathcal{M}^{\Ewens}_{t_1,\ldots,t_k;\;n}\left(\Lambda_n^{(k)}\right)
=\frac{n!}{\prod\limits_{l=1}^k |\lambda^{(l)}_1|!\ldots|\lambda^{(l)}_{n}|!}\\
&\times\int\limits_{\overline{\nabla}_0^{(k)}}
m_{\lambda^{(1)}}(x^{(1)})\ldots m_{\lambda^{(k)}}(x^{(k)})
PD(T_1,\ldots,T_k)\left(d\omega\right),\;\;\forall \Lambda_n^{(k)}=\left(\lambda^{(1)},\ldots,\lambda^{(k)}\right)\in\Y_n^{(k)}.
\end{split}
\nonumber
\end{equation}
Here   $m_{\lambda^{(l)}}\left(x^{(l)}\right)$ are the monomial symmetric functions,
$T_1=\frac{t_1}{\zeta_{c_1}}$, $\ldots$, $T_k=\frac{t_k}{\zeta_{c_k}}$,
the parameters $\zeta_{c_l}$ are defined by $\zeta_{c_l}=\frac{|G|}{|c_l|}$,
and $PD(T_1,\ldots,T_k)$
is the multiple Poisson-Dirichlet distribution.
\end{thm}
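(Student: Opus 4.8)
The plan is to combine the general representation theorem for multiple partition structures (Theorem \ref{THEOREMIBIJECTIVEMULTIPLEPARTITIONSTRUCTURES}) with a direct identification of the boundary measure $P^{\Ewens}_{t_1,\ldots,t_k}$ appearing in \eqref{MeasuresPartitionsMeasuresSymplex}. Since Proposition \ref{PropositionEwensMultiplePartitionStructure} guarantees that $\left(\mathcal{M}^{\Ewens}_{t_1,\ldots,t_k;\;n}\right)_{n=1}^{\infty}$ is a multiple partition structure, Theorem \ref{THEOREMIBIJECTIVEMULTIPLEPARTITIONSTRUCTURES} already gives us the integral representation \eqref{MeasuresPartitionsMeasuresSymplex} against \emph{some} probability measure $P^{\Ewens}_{t_1,\ldots,t_k}$ on $\overline{\nabla}^{(k)}$; moreover this $P$ is unique by the bijectivity clause of that theorem. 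So the whole task reduces to showing that $P^{\Ewens}_{t_1,\ldots,t_k} = PD(T_1,\ldots,T_k)$, the multiple Poisson-Dirichlet distribution, after which the displayed formula follows: on the set $\overline{\nabla}_0^{(k)}$ the defect $\delta^{(l)}-\sum_i x_i^{(l)}$ vanishes, so in \eqref{MEXTENDEDT} only the $p=0$ term survives and $M_{\lambda^{(l)}}(x^{(l)},\delta^{(l)})$ collapses to $m_{\lambda^{(l)}}(x^{(l)})$, which turns the kernel $\mathbb{K}$ in Theorem \ref{THEOREMIBIJECTIVEMULTIPLEPARTITIONSTRUCTURES} into exactly the integrand of the claimed formula.

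First I would reduce to a $k$-fold tensor / mixture structure. The key observation is that formula \eqref{MEUSE} factors: $\mathcal{M}^{\Ewens}_{t_1,\ldots,t_k;\;n}$ is a sum over compositions $n = n_1+\cdots+n_k$ of the multinomial weight $\frac{n!}{n_1!\cdots n_k!}\cdot\frac{(T_1)_{n_1}\cdots(T_k)_{n_k}}{(T_1+\cdots+T_k)_n}$ times a product $\prod_l M^{\Ewens}_{T_l,n_l}(\lambda^{(l)})$ of ordinary Ewens measures on partitions of the $n_l$. The first factor is precisely the Dirichlet--multinomial (Pólya urn) probability that, among $n$ balls, $n_l$ land in "gene block $l$" when the block proportions $(\delta^{(1)},\ldots,\delta^{(k)})$ are $D(T_1,\ldots,T_k)$-distributed. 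This is the probabilistic content of Definition \ref{DEFINITIONMULTIPLEPARTITIONSTRUCTURE}'s sampling picture, and it already hints that the boundary measure should be a mixture where each block $l$ independently carries a Poisson-Dirichlet allele structure scaled by its block mass $\delta^{(l)}$.

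Next I would pin down the single-block factor. For $k=1$ the equality $P^{\Ewens}_{t_1} = PD(T_1)$ (with $T_1=t_1$ when $\zeta_{c_1}=1$) is exactly Kingman's theorem \cite{Kingman2}; concretely, the well-known identity
\begin{equation}\label{SingleBlockKingman}
M^{\Ewens}_{T,n}(\lambda) = \frac{n!}{\prod_j j^{r_j(\lambda)} r_j(\lambda)!}\int_{\overline{\nabla}_0^{(1)}} m_\lambda(x)\, PD(T)(dx)
\end{equation}
(noting the bookkeeping $\prod_j j^{r_j(\lambda)} r_j(\lambda)! = \prod_j |\lambda_j|!\cdot(\text{the number of distinct-part arrangements})$, which I will state carefully) holds for each block. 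I would then use the fact that $PD(t_l)$ arises as a limit of symmetric Dirichlet distributions $D(t_l/(M-1),\ldots,t_l/(M-1))$ and that $D(T_1,\ldots,T_k)$ controls the block masses: combining these via a single Dirichlet distribution $D\big(T_1/(M-1),\ldots,T_1/(M-1),\ldots,T_k/(M-1),\ldots\big)$ with $M-1$ coordinates in each block and passing to the limit $M\to\infty$ yields exactly the law of $\big(\delta^{(1)}x^{(1)},\ldots,\delta^{(k)}x^{(k)}\big)$ with $x^{(l)}\sim PD(t_l)$ independent and $(\delta^{(l)})\sim D(T_1,\ldots,T_k)$ independent of them — i.e. $PD(T_1,\ldots,T_k)$. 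Finitely sampling $n$ points from this Dirichlet reproduces \eqref{MEUSE} by the aggregation property of the Dirichlet distribution, so the finite-$M$ sampling probabilities match $\mathcal{M}^{\Ewens}$ in the limit; by uniqueness in Theorem \ref{THEOREMIBIJECTIVEMULTIPLEPARTITIONSTRUCTURES} this forces $P^{\Ewens}_{t_1,\ldots,t_k}=PD(T_1,\ldots,T_k)$.

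The main obstacle I anticipate is the interchange of the $M\to\infty$ limit with the infinite sums defining the monomial symmetric functions and with the kernel $\mathbb{K}$ — i.e. justifying that the finite-$M$ "approximate" partition structures converge to $\mathcal{M}^{\Ewens}$ not just in the weak sense but in a way that identifies the boundary measures. A clean way around this is to avoid limits entirely: verify \eqref{SingleBlockKingman} as an algebraic identity in each block (it is classical, via the generating-function expansion of $(t)_n^{-1}t^{l(\lambda)}$ against moments of $PD(t)$), then multiply the $k$ block identities and reweight by the Dirichlet--multinomial factor, recognizing the product $\prod_l \delta_l^{n_l}\, m_{\lambda^{(l)}}(y^{(l)})$ with $y^{(l)}=x^{(l)}$ and $\delta_l x_i^{(l)}$ the actual coordinates — the factor $\delta_l^{n_l}$ being absorbed because $m_{\lambda^{(l)}}$ is homogeneous of degree $n_l$, so $m_{\lambda^{(l)}}(\delta_l x^{(l)}) = \delta_l^{n_l} m_{\lambda^{(l)}}(x^{(l)})$. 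Matching the resulting integral against \eqref{MEUSE} term by term over compositions, and invoking uniqueness one last time, completes the identification and hence the theorem.
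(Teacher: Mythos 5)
Your recommended route (the ``avoid limits entirely'' version) is essentially the paper's own proof: existence and uniqueness of the boundary measure from Theorem \ref{THEOREMIBIJECTIVEMULTIPLEPARTITIONSTRUCTURES}, the single-block Kingman representation for each $M^{\Ewens}_{T_l,|\lambda^{(l)}|}$, rewriting the Pochhammer ratio $\frac{(T_1)_{|\lambda^{(1)}|}\cdots(T_k)_{|\lambda^{(k)}|}}{(T_1+\cdots+T_k)_n}$ as a Dirichlet integral, absorbing $\delta_l^{|\lambda^{(l)}|}$ by homogeneity of $m_{\lambda^{(l)}}$, noting that on $\overline{\nabla}_0^{(k)}$ the extended monomials reduce to ordinary ones, and concluding $P^{\Ewens}_{t_1,\ldots,t_k}=PD(T_1,\ldots,T_k)$ by uniqueness. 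The only correction needed is the constant in your single-block identity: it should be $n!/\bigl(\lambda_1!\cdots\lambda_{l(\lambda)}!\bigr)$ as in equation (\ref{KingmanRepresentationFormula}), not $n!/\prod_j j^{r_j(\lambda)}r_j(\lambda)!$.
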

\begin{rem}
 In the simplest case $k=1$ we obtain
 \begin{equation}\label{KingmanRepresentationFormula}
\begin{split}
&\mathcal{M}^{\Ewens}_{t,n}\left(\lambda\right)
=\frac{n!}{|\lambda_1|!\ldots|\lambda_{l(\lambda)}|!}\int\limits_{\overline{\nabla}_0^{(1)}}
m_{\lambda}(x)
PD(t)\left(dx\right),
\end{split}
\end{equation}
where
\begin{equation}
\overline{\nabla}_0^{(1)}=\left\{x=\left(x_1^{(1)}\geq x_2^{(1)}\geq\ldots\geq 0\right),\;\sum\limits_{i=1}^{\infty}x_i^{(1)}=1\right\}.
\end{equation}
Equation \ref{KingmanRepresentationFormula} is the Kingman representation formula for the Ewens measure $\mathcal{M}^{\Ewens}_{t,n}$ on the set of Young diagrams with $n$ boxes,
see Kingman \cite{Kingman1,Kingman2}.
\end{rem}
\subsubsection{Correlation functions}
The multiple Poisson-Dirichlet distribution introduced above is a generalization
of the classical Poisson-Dirichlet distribution $PD(t)$. It is a well-known and a very important fact  that the distribution
$PD(t)$ can be described in terms of a point process whose correlation functions
are given by an explicit formula, see equation (\ref{PDCorrelationFunctions}) below.
Different applications of correlation functions for $PD(t)$ is described in the paper by
Watterson \cite{Watterson}.
Here we show that the multiple Poisson-Dirichlet distribution $PD\left(t_1,\ldots,t_k\right)$ can be characterized in a similar way
as $PD(t)$.
Namely, a point process can be associated with $PD(t_1,\ldots,t_k)$, and its correlation functions can be explicitly computed.

 Set $I=[0,1]$, and let $\Conf(I)$ be the collection of all finite and countably infinite subsets of $I$.  We define an embedding $\overline{\nabla}_0^{(1)}\rightarrow\Conf(I)$ by removing all possible zero coordinates, and forgetting the ordering. Thus
$$
x=\left(x_1\geq x_2\geq\ldots\geq 0\right)\rightarrow C=\left\{x_i\neq 0\right\}.
$$
In this way we convert $x\in\overline{\nabla}_0^{(1)}$ into a point configuration in $I=[0,1]$. The pushforward $\mathcal{P}\mathcal{D}(t)$ of the Poisson-Dirichlet distribution $PD(t)$ is a point process on the space $[0,1]$ called the Poisson-Dirichlet process.

Assume that
$$
C=\left\{X_1,X_2,\ldots\right\}\in\Conf(I)
$$
is a random point configuration of the Poisson-Dirichlet process. The correlation functions of this
process are characterized by equation
\begin{equation}
\mathbb{E}\left(\sum\limits_{i_1,\ldots,i_n}f\left(X_{i_1},\ldots,X_{i_n}\right)\right)
=\int\limits_0^1\ldots\int\limits_{0}^1f(x_1,\ldots,x_n)\varrho_n^{PD(t)}(x_1,\ldots,x_n)dx_1\ldots dx_n,
\end{equation}
where  the summation is over all $n$-tuples of pairwise distinct indices, and $f$ is a bounded compactly supported function on $I^n$. It is known (see Watterson \cite{Watterson}) that
\begin{equation}\label{PDCorrelationFunctions}
\varrho_n^{PD(t)}(x_1,\ldots,x_n)=\frac{t^n}{x_1\ldots x_n}\left(1-\sum\limits_{j=1}^nx_j\right)^{t-1}
1_{\overline{\nabla}_n^{(1)}}\left(x_1,\ldots,x_n\right),
\end{equation}
where
$
\overline{\nabla}_n^{(1)}=\left\{\left(x_1,\ldots,x_n\right):\; x_1\geq 0,\ldots,x_n\geq 0,\; x_1+\ldots+x_n\leq 1\right\}.
$
Furthermore, assume that each sequence of random variables
$$
x^{(l)}=\left(x_1^{(l)}\geq x_2^{(l)}\geq\ldots\geq 0\right),\;\; l=1,\ldots, k,
$$
forms the Poisson-Dirichlet point process on $I=[0,1]$, and the sequences $x^{(l)}$ are independent.
Let $\delta^{(1)}$, $\ldots$, $\delta^{(k)}$  be random variables independent on $x^{(l)}$'s  whose joint distribution is  $D(T_1,\ldots,T_k)$. The point process formed by
$\delta^{(l)}x^{(l)}=\left(\delta^{(l)}x^{(l)}_1\geq\delta^{(l)}x_2^{(l)}\geq \ldots\right)$,
$l=1,\ldots, k$, is called the \textit{multiple Poisson-Dirichlet point process} $\mathcal{P}\mathcal{D}(T_1,\ldots,T_k)$.
It is not hard to see that
the correlation functions
$\varrho_{n_1,\ldots,n_k}^{\mathcal{P}\mathcal{D}(T_1,\ldots,T_k)}\left(
x_1^{(1)},\ldots,x_{n_1}^{(1)},\ldots,x_1^{(k)},\ldots,x_{n_k}^{(k)}\right)$
of the  multiple Poisson-Dirichlet point process $\mathcal{P}\mathcal{D}(T_1,\ldots,T_k)$  can be expressed
in terms of the correlation functions $\varrho_n^{PD(T)}(x_1,\ldots,x_n)$ of the Poisson-Dirichlet point process (these correlation functions are given explicitly by equation (\ref{PDCorrelationFunctions})).
Namely, we have
\begin{equation}
\begin{split}
&\varrho_{n_1,\ldots,n_k}^{\mathcal{P}\mathcal{D}(T_1,\ldots,T_k)}\left(
x_1^{(1)},\ldots,x_{n_1}^{(1)},\ldots,x_1^{(k)},\ldots,x_{n_k}^{(k)}\right)\\
&=\underset{\delta_1+\ldots+\delta_k=1}{\underset{\delta_1\geq 0,\ldots, \delta_k\geq 0}{\int\ldots\int}}
\varrho_{n_1}^{PD(T_1)}\left(\frac{x_1^{(1)}}{\delta_1},\ldots,\frac{x_{n_1}^{(1)}}{\delta_1}\right)
\ldots\varrho_{n_k}^{PD(T_k)}\left(\frac{x_1^{(k)}}{\delta_k},\ldots,\frac{x_{n_k}^{(k)}}{\delta_k}\right)
\\
&
\times\frac{\Gamma(T_1)\ldots\Gamma(T_k)}{\Gamma(T_1+\ldots+T_k)}\delta_1^{T_1-n_1-1}\ldots\delta_{k}^{T_k-n_k-1}
d\delta_1\ldots d\delta_{k},
\end{split}
\end{equation}
where $n_1$, $\ldots$, $n_k$ are arbitrary strictly positive integers.
\section{The algebra $\Sym_{\theta}^{\ast}\left(G\right)$ and the asymptotics of its elements}
In this Section we introduce and study the algebra $\Sym_{\theta}^{\ast}\left(G\right)$.
This algebra, $\Sym_{\theta}^{\ast}\left(G\right)$, is a generalization of the algebra
of $\theta$-shifted symmetric polynomials described in Kerov, Okounkov, and Olshanski \cite{KerovOkounkovOlshanski},
Section 7.
We will see that Theorem \ref{MAINTHEOREM} is a consequence of certain asymptotic result  for the elements of $\Sym_{\theta}^{\ast}\left(G\right)$ realized as real-valued functions on the set $\Y_n^{(k)}$ of all multiple partitions of $n$ with $k$ components.
\subsection{The algebra $\Sym_{\theta}^{\ast}\left(G\right)$}\label{Section5.1}
Recall that $G_{\ast}=\left\{c_1,\ldots,c_k\right\}$ denotes the set of conjugacy classes of $G$. To each conjugacy class from $G_{\ast}$ we assign a sequence of variables
$$
x_{c_1}=\left(x_{1c_1},x_{2c_1},\ldots\right),\ldots,x_{c_k}=\left(x_{1c_k},x_{2c_k},\ldots\right).
$$
For each $1\leq l\leq k$, and each $N=1,2,\ldots $ set
\begin{equation}\label{DefinitionOfPrStar}
p_{r}^{\ast}\left(x_{1c_l},\ldots,x_{Nc_l};\theta\right)=
\sum\limits_{i=1}^N\left(x_{ic_l}-\theta i\right)^r-\left(-\theta i\right)^r
\end{equation}
where $r=1,2,\ldots$. These polynomials are used to introduce the $\theta$-shifted analogues of the power symmetric functions as sequences
\begin{equation}\label{DefinitionOfPrStar1}
\begin{split}
&p_r^{\ast}\left(c_1;\theta\right)=\left(p_{r}^{\ast}\left(x_{1c_1};\theta\right), p_{r}^{\ast}\left(x_{1c_1},x_{2c_1};\theta\right),\ldots\right),\ldots,\\
&p_r^{\ast}\left(c_k;\theta\right)=\left(p_{r}^{\ast}\left(x_{1c_k};\theta\right), p_{r}^{\ast}\left(x_{1c_k},x_{2c_k};\theta\right),\ldots\right).
\end{split}
\end{equation}
These sequences define elements of the rings of symmetric functions in variables
$x_{1c_l}$, $x_{2c_l}$, $\ldots$.
The algebra $\Sym_{\theta}^{\ast}\left(G\right)$ is defined by
$$
\Sym_{\theta}^{\ast}\left(G\right)=\R\left[p_r^{\ast}\left(c_l;\theta\right):r\geq 1;\; l=1,\ldots,k\right].
$$
We assign degree $r$ to $p_r^{\ast}\left(c_l;\theta\right)$, and the $\Sym_{\theta}^{\ast}\left(G\right)$
is a graded algebra. If $f_{\theta}^{\ast}$ is an arbitrary element of $\Sym_{\theta}^{\ast}\left(G\right)$, then $\deg\left(f_{\theta}^{\ast}\right)$ denotes the degree of the highest homogeneous component of $f_{\theta}^{\ast}$.

In addition to $\Sym_{\theta}^{\ast}\left(G\right)$ we use the algebra $\Sym(G)$,
$$
\Sym\left(G\right)=\R\left[p_r\left(c_l\right):r\geq 1; l=1,\ldots,k\right],
$$
see Section \ref{SectionTheBranchingGraph}. Here $p_r\left(c_l\right)$ denotes the power sum symmetric function associated with $p_r^{\ast}\left(c_l;\theta\right)$ and defined as
in Section \ref{SectionTheBranchingGraph}.

Below we use the map
$$
\left[\;\right]:\;\Sym_{\theta}^{\ast}\left(G\right)\rightarrow\Sym\left(G\right)
$$
defined as follows. Assume that $f_{\theta}^{\ast}$ is an arbitrary element of $\Sym_{\theta}^{\ast}\left(G\right)$. Then $f_{\theta}^{\ast}$ is a linear combination of monomials
in variables $p_r^{\ast}\left(c_l;\theta\right)$, where $r\geq 1$ and $l=1,\ldots,k$.
The function $[f^{\ast}_{\theta}]\in\Sym(G)$ is obtained from $f_{\theta}^{\ast}\in\Sym_{\theta}^{\ast}\left(G\right)$
by choosing the homogeneous component of the highest degree of $f_{\theta}^{\ast}$, and by replacing each $p_{r}^{\ast}\left(c_l;\theta\right)$ by $p_{r}\left(c_l\right)$ in this homogeneous component.
By definition, $\deg(f^{\ast}_{\theta})$ is the degree of $[f^{\ast}_{\theta}]$.
Note that if $f_{\theta}^{\ast}\in\Sym_{\theta}^{\ast}\left(G\right)$ is evaluated at variables
$x_{1c_1}$, $\ldots$, $x_{N_1c_1}$; $\ldots$; $x_{1c_{k}}$, $\ldots$, $x_{N_kc_k}$, then $[f_{\theta}^{\ast}]$
is the highest degree homogeneous component of the polynomial $f_{\theta}^{\ast}$ in variables
$x_{1c_1}$, $\ldots$, $x_{N_1c_1}$; $\ldots$; $x_{1c_{k}}$, $\ldots$, $x_{N_kc_k}$.

For example, assume that $G=S(3)$. Then $G_{\ast}=\left\{c_1,c_2,c_3\right\}$, where
$$
c_{1}=\left\{(1)(2)(3)\right\},\;\; c_2=\left\{(12)(3), (13)(2), (1)(23)\right\},\;\;
c_3=\left\{(123),(132)\right\}
$$
are the conjugacy classes of $S(3)$. The algebra
$\Sym_{\theta}^{\ast}\left(S(3)\right)$ is that of polynomials in variables
$\left\{p_m^{\ast}\left(c_l;\theta\right):m\geq 1;\; l=1,2,3\right\}$ over $\R$.
If
$$
f_{\theta}^{\ast}=2\left(p_3^{\ast}\left(c_1;\theta\right)\right)^2p_1^{\ast}\left(c_2;\theta\right)
+3p_2^{\ast}\left(c_3;\theta\right)p_5^{\ast}\left(c_2;\theta\right)-
27\left(p_1^{\ast}\left(c_2;\theta\right)\right)^2p_3^{\ast}\left(c_3;\theta\right),
$$
then $f_{\theta}^{\ast}$ has two homogeneous components. The homogeneous component of the highest degree of $f_{\theta}^{\ast}$ is
$$
2\left(p_3^{\ast}\left(c_1;\theta\right)\right)^2p_1^{\ast}\left(c_2;\theta\right)
+3p_2^{\ast}\left(c_3;\theta\right)p_5^{\ast}\left(c_2;\theta\right),
$$
so $\deg\left(f_{\theta}^{\ast}\right)=7$.
We then obtain that
$$
[f_{\theta}^{\ast}]=2\left(p_3\left(c_1\right)\right)^2p_1\left(c_2\right)
+3p_2\left(c_3\right)p_5\left(c_2\right).
$$

Below we are interested in the following realization of $\Sym_{\theta}^{\ast}\left(G\right)$.
For each $\Lambda_n^{(k)}\in\Y_n^{(k)}$ we define the sequences $x_{c_1}$, $\ldots$, $x_{c_k}$ as
$$
x_{c_1}=\left(\lambda^{(1)}_1,\ldots,\lambda_{l(\lambda^{(1)})}^{(1)},0,0,\ldots\right)
,\ldots,
x_{c_k}=\left(\lambda^{(k)}_1,\ldots,\lambda_{l(\lambda^{(k)})}^{(k)},0,0,\ldots\right).
$$
Then each element of $\Sym_{\theta}^{\ast}\left(G\right)$ can be understood as a function $f^{\ast}_{\theta}:\Y_n^{(k)}\rightarrow\R$ (as it follows from
equation (\ref{DefinitionOfPrStar}), and from equation (\ref{DefinitionOfPrStar1})). We are interested in the asymptotics of $f^{\ast}_{\theta}\left(\Lambda_n^{(k)}\right)$ as $n\rightarrow\infty$, and $k$ remains fixed.
\subsection{The generalized Thoma set $\Omega\left(G\right)$. The algebra homomorphism $\varphi_{\alpha,\beta,\delta}$}
Let
$$
[0,1]^{\infty}=\left\{x: x=\left(x_1,x_2,\ldots\right);\;\; x_i\in[0,1],\;\; i=1,2,\ldots \right\}
$$
 be the set of sequences  of real numbers taking values in the closed unit interval $[0,1]$.
 The set $[0,1]^{\infty}$ can be regarded as the direct product of countably many copies of $[0,1]$. We equip $[0,1]^{\infty}$ with the product topology. In this topology
 $[0,1]^{\infty}$ is a compact set. Introduce
 $$
 I_k^{\infty}=\underset{k\;\mbox{times}}{\underbrace{[0,1]^{\infty}\times\ldots\times[0,1]^{\infty}}},
 $$
 and
 $$
 \Delta_k=\left\{\delta=\left(\delta^{(1)},\ldots,\delta^{(k)}\right)\biggr|
 \delta^{(1)}\geq 0,\ldots,\delta^{(k)}\geq 0,\;\sum\limits_{l=1}^k\delta^{(l)}=1\right\}.
 $$
 Let $G$ be a finite group with $k$ conjugacy classes. The generalized Thoma set $\Omega\left(G\right)$ is  a subset of $I_k^{\infty}\times I_k^{\infty}\times\Delta_k$
 defined by equation (\ref{THEGENERALIZEDTHOMASET}).
 It is not hard to verify
 that $\Omega(G)$ is closed in $I_k^{\infty}\times I_k^{\infty}\times\Delta_k$, and it itself is compact.

To each point $(\alpha,\beta,\delta)\in\Omega(G)$ we associate an algebra  homomorphism
$$
\varphi_{\alpha,\beta,\delta}:\;\;\Sym\left(G\right)\rightarrow\R
$$
defined by the formula\footnote{Note that  $\varphi_{\alpha,\beta,\delta}$ depends on $\theta$ as well. This is ignored from the notation of the
homomorphism for simplicity}
\begin{equation}\label{4.2new}
\varphi_{\alpha,\beta,\delta}\left(p_r(c_l)\right)=\left\{\begin{array}{ll}
\sum\limits_{i=1}^{\infty}\left(\alpha^{(l)}_i\right)^r+(-\theta)^{r-1}\sum\limits_{i=1}^{\infty}
\left(\beta_i^{(l)}\right)^r, & r=2,3,\ldots, \\
\delta^{(l)}, & r=1,
\end{array}
\right.
\end{equation}
where $1\leq l\leq k$.
\begin{prop}\label{PropositionConiinuityPowerFunctions} The functions $p_{r,l}^{o}:\;\Omega\left(G\right)\rightarrow\R$ defined by $p_{r,l}^{o}\left(\alpha,\beta,\delta;\theta\right)=\varphi_{\alpha,\beta,\delta}\left(p_r\left(c_l\right)\right)$ are continuous. Here $1\leq l\leq k$, $r\geq 1$, and $\varphi_{\alpha,\beta,\delta}\left(p_r\left(c_l\right)\right)$ is given
by equation (\ref{4.2new}).
\end{prop}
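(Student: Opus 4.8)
The plan is to split according to whether $r=1$ or $r\geq 2$. When $r=1$ the function $p^{o}_{1,l}(\alpha,\beta,\delta;\theta)=\delta^{(l)}$ is simply the restriction to $\Omega(G)$ of the coordinate projection of $I_k^{\infty}\times I_k^{\infty}\times\Delta_k$ onto its $\delta^{(l)}$ coordinate, which is continuous for the product topology, so nothing further is needed in that case. (It is worth recording why $\delta^{(l)}$ cannot here be replaced by $\sum_i\alpha^{(l)}_i+\sum_i\beta^{(l)}_i$: that quantity is only lower semicontinuous in the product topology, and it is precisely to repair this defect that the parameter $\delta^{(l)}$ is built into the definition of $\Omega(G)$.)

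For $r\geq 2$ the idea is to realize $p^{o}_{r,l}$ as a uniform limit on $\Omega(G)$ of the partial sums
\[
s_N(\alpha,\beta,\delta)=\sum_{i=1}^{N}\bigl(\alpha^{(l)}_i\bigr)^{r}+(-\theta)^{r-1}\sum_{i=1}^{N}\bigl(\beta^{(l)}_i\bigr)^{r},\qquad N=1,2,\ldots,
\]
each of which is continuous on $\Omega(G)$, being a finite $\mathbb{R}$-linear combination (the coefficient $(-\theta)^{r-1}$ is a fixed constant, since $\theta\geq 0$ is fixed) of functions of the form $(\alpha,\beta,\delta)\mapsto\bigl(\alpha^{(l)}_i\bigr)^{r}$ and $(\alpha,\beta,\delta)\mapsto\bigl(\beta^{(l)}_i\bigr)^{r}$, i.e.\ of a coordinate projection composed with the continuous map $t\mapsto t^{r}$ of $[0,1]$ into $\mathbb{R}$. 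Since a uniform limit of continuous real-valued functions is continuous, it suffices to prove the uniform convergence $s_N\to p^{o}_{r,l}$ on $\Omega(G)$.

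This uniform convergence will follow from a Weierstrass $M$-test resting on an elementary decay estimate valid at every point of $\Omega(G)$. Because $\alpha^{(l)}_1\geq\alpha^{(l)}_2\geq\cdots\geq 0$, for each $i$ one has $i\,\alpha^{(l)}_i\leq\alpha^{(l)}_1+\cdots+\alpha^{(l)}_i\leq\sum_{j=1}^{\infty}\bigl(\alpha^{(l)}_j+\beta^{(l)}_j\bigr)\leq\delta^{(l)}\leq\sum_{m=1}^{k}\delta^{(m)}=1$, hence $0\leq\alpha^{(l)}_i\leq 1/i$, and in the same way $0\leq\beta^{(l)}_i\leq 1/i$. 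Consequently, on all of $\Omega(G)$,
\[
\Bigl|\bigl(\alpha^{(l)}_i\bigr)^{r}+(-\theta)^{r-1}\bigl(\beta^{(l)}_i\bigr)^{r}\Bigr|\leq\frac{1+\theta^{r-1}}{i^{r}},
\]
and since $r\geq 2$ the series $\sum_{i\geq 1}i^{-r}$ converges; therefore $\sup_{\Omega(G)}\bigl|p^{o}_{r,l}-s_N\bigr|\leq(1+\theta^{r-1})\sum_{i>N}i^{-r}\to 0$ as $N\to\infty$. This gives the uniform convergence, hence the continuity of $p^{o}_{r,l}$, completing the argument.

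I do not foresee a genuine obstacle here; the only step using more than generalities is the bound $\alpha^{(l)}_i\leq 1/i$ (and its $\beta$-analogue), and this is exactly where the defining constraints $\sum_i\bigl(\alpha^{(l)}_i+\beta^{(l)}_i\bigr)\leq\delta^{(l)}$ and $\sum_l\delta^{(l)}=1$ of the generalized Thoma set are used — it is these constraints that force uniform, rather than merely pointwise, convergence of the series. Note also that compactness of $\Omega(G)$ plays no role in this proof: only the subspace topology inherited from the product $I_k^{\infty}\times I_k^{\infty}\times\Delta_k$ is needed.
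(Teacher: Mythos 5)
Your proof is correct and follows essentially the same route as the paper: both rest on the bound $\alpha^{(l)}_i,\beta^{(l)}_i\leq\delta^{(l)}/i\leq 1/i$, forced by the constraint $\sum_i\bigl(\alpha^{(l)}_i+\beta^{(l)}_i\bigr)\leq\delta^{(l)}$, which yields uniform convergence of the series defining $p^{o}_{r,l}$ for $r\geq 2$ and hence continuity. The only differences are cosmetic: you obtain the bound directly from monotonicity ($i\,\alpha^{(l)}_i\leq\alpha^{(l)}_1+\cdots+\alpha^{(l)}_i$) rather than by the paper's contradiction argument, and you phrase the conclusion as a uniform limit of continuous partial sums instead of interchanging sums and limits along convergent sequences.
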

\begin{proof} For $r=1$ the claim of Proposition \ref{PropositionConiinuityPowerFunctions}  is obvious. Assume that $r\geq 2$.
Let $\left(\alpha(n),\beta(n),\delta(n)\right)_{n=1}^{\infty}$ be a sequence of points in $\Omega(G)$ which converges to some point $(\alpha,\beta,\delta)$ in $\Omega(G)$. We need to show that
$$
\underset{n\rightarrow\infty}{\lim}
p_{r,l}^{o}\left(\alpha(n),\beta(n),\delta(n);\theta\right)
=p_{r,l}^{o}\left(\alpha,\beta,\delta;\theta\right).
$$
It is enough to show that $\sum\limits_{i=1}^{\infty}\left(\alpha_i^{(l)}\right)^r$ and
 $\sum\limits_{i=1}^{\infty}\left(\beta_i^{(l)}\right)^r$ converge uniformly in $\alpha^{(l)}$ and $\beta^{(l)}$, so we will be able to interchange the sums and the limits. Note that the condition
 $$
 \sum\limits_{i=1}^{\infty}\left(\alpha_i^{(l)}+\beta_i^{(l)}\right)\leq\delta^{(l)}
 $$
 implies $\alpha_i^{(l)}\leq\frac{\delta^{(l)}}{i}$, and $\beta_i^{(l)}\leq\frac{\delta^{(l)}}{i}$ for $i=1,2,\ldots$.
 Indeed, assume that $\alpha_N^{(l)}>\frac{\delta^{(l)}}{N}$ for some $N$.
 Since $\alpha_1^{(l)}\geq\ldots\geq\alpha_{N-1}^{(l)}\geq\alpha_N^{(l)}$, we should also have
 $$
 \alpha_1^{(l)}>\frac{\delta^{(l)}}{N},\ldots,\alpha_{N-1}^{(l)}>\frac{\delta^{(l)}}{N}.
 $$
 In addition, $\sum\limits_{i=1}^N\alpha_i^{(l)}\leq\delta^{(l)}$. Then we obtain
 $$
 \frac{\delta^{(l)}}{N}<\alpha_N^{(l)}\leq\delta^{(l)}-\alpha_1^{(l)}-\ldots-\alpha_{N-1}^{(l)}<\frac{\delta^{(l)}}{N},
 $$
 which is a contradiction. Thus the convergence of
 $\sum\limits_{i=1}^{\infty}\left(\alpha_i^{(l)}\right)^r$ and
 $\sum\limits_{i=1}^{\infty}\left(\beta_i^{(l)}\right)^r$ is uniform in $\alpha^{(l)}$ and $\beta^{(l)}$, and the statement of Proposition \ref{PropositionConiinuityPowerFunctions} follows.
 \end{proof}
 Given an arbitrary element $f\in\Sym(G)$, we consider a continuous function $f^{o}:\;\Omega(G)\rightarrow\R$
 defined by
\begin{equation}\label{4.3new}
f^{o}\left(\alpha,\beta,\delta;\theta\right)=\varphi_{\alpha,\beta,\delta}\left(f\right),\;\;\left(\alpha,\beta,\delta\right)\in\Omega\left(G\right).
\end{equation}
Namely, $f^{o}\left(\alpha,\beta,\delta;\theta\right)$
is obtained from $f$ by expressing the element $f$ of $\Sym(G)$ as a polynomial in variables $\left\{p_r(c_l):\;l=1,\ldots,k; r\geq 1\right\}$,
and by subsequent replacing each $p_{r}(c_l)$ by $\varphi_{\alpha,\beta,\delta}\left(p_r(c_l)\right)$ defined by (\ref{4.2new}).
Equations (\ref{4.2new}) and (\ref{4.3new}) define the algebra homomorphism between $\Sym(G)$ and the space $C(\Omega(G))$
of real continuous functions on $\Omega(G)$.
\begin{prop}\label{PROPOSITIONIMAGEDENSE} Consider the map from the algebra $\Sym\left(G\right)$ into the space $C\left(\Omega(G)\right)$ of real continuous functions with supremum norm defined by equations (\ref{4.2new}) and (\ref{4.3new}).
The image of $\Sym\left(G\right)$ under this map is dense in $C\left(\Omega\left(G\right)\right)$ in the norm topology.
\end{prop}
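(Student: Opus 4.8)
The plan is to invoke the Stone--Weierstrass theorem. Recall that $\Omega(G)$ is compact (it is a closed subset of the compact space $I_k^{\infty}\times I_k^{\infty}\times\Delta_k$), and that, by Proposition \ref{PropositionConiinuityPowerFunctions} together with equations (\ref{4.2new})--(\ref{4.3new}), the assignment $f\mapsto f^{o}$ is an algebra homomorphism $\Sym(G)\to C(\Omega(G))$. Hence its image $\mathcal{A}=\{f^{o}:f\in\Sym(G)\}$ is a subalgebra of $C(\Omega(G))$, and it contains the constant functions because $\varphi_{\alpha,\beta,\delta}(1)=1$. So to conclude that $\mathcal{A}$ is dense in the supremum norm it suffices to check that $\mathcal{A}$ separates the points of $\Omega(G)$.

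First I would reduce the separation to a statement about the functions $p_{r,l}^{o}$. Given distinct points $\omega=(\alpha,\beta,\delta)$ and $\omega'=(\alpha',\beta',\delta')$ of $\Omega(G)$: if $\delta\ne\delta'$ then $\delta^{(l)}\ne\delta'^{(l)}$ for some $l$, and $p_{1,l}^{o}$ --- the image of $p_1(c_l)\in\Sym(G)$ --- already separates $\omega$ and $\omega'$. If $\delta=\delta'$, choose $l$ with $(\alpha^{(l)},\beta^{(l)})\ne(\alpha'^{(l)},\beta'^{(l)})$ and consider the family $\{p_{r,l}^{o}:r\ge 2\}$. Using the bounds $\alpha_i^{(l)},\beta_i^{(l)}\le\delta^{(l)}/i$ (which hold on $\Omega(G)$, exactly as established inside the proof of Proposition \ref{PropositionConiinuityPowerFunctions}), the series
\[
\sum_{r\ge 2}p_{r,l}^{o}(\alpha,\beta,\delta;\theta)\,z^{r-2}
=\sum_{i=1}^{\infty}\frac{(\alpha_i^{(l)})^2}{1-\alpha_i^{(l)}z}
-\theta\sum_{i=1}^{\infty}\frac{(\beta_i^{(l)})^2}{1+\theta\beta_i^{(l)}z}
\]
converges for small $|z|$ and extends to a meromorphic function on $\C$ whose poles on the positive real axis are the reciprocals $1/\alpha_i^{(l)}$ of the nonzero $\alpha$'s, whose poles on the negative real axis are the reciprocals $-1/(\theta\beta_i^{(l)})$ of the nonzero $\beta$'s (here $\theta>0$), and whose residues record the corresponding multiplicities. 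Thus this meromorphic function, hence its Taylor coefficients at $0$ --- i.e. the numbers $p_{r,l}^{o}$, $r\ge 2$ --- determine the sequences $\alpha^{(l)}$ and $\beta^{(l)}$ uniquely. Consequently, if every $p_{r,l}^{o}$ agreed at $\omega$ and $\omega'$ we would get $(\alpha^{(l)},\beta^{(l)})=(\alpha'^{(l)},\beta'^{(l)})$, contrary to the choice of $l$; so some $p_{r,l}^{o}$, $r\ge 2$, separates $\omega$ and $\omega'$. (For $\theta=0$ the $\beta$-coordinates are invisible to every $f^{o}$, so the statement is to be read with $\beta\equiv 0$, consistently with the set $\overline{\nabla}^{(k)}$ of Theorem \ref{THEOREMIBIJECTIVEMULTIPLEPARTITIONSTRUCTURES}, and the same argument applies to the $\alpha$-part.)

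The hard part is this separation step, specifically identifying $\sum_{r\ge 2}p_{r,l}^{o}z^{r-2}$ with the resolvent-type expression above and extracting $(\alpha^{(l)},\beta^{(l)})$ from its poles and residues; this is the wreath-product analogue of the classical uniqueness fact underlying Thoma's theorem. Once it is in place, Stone--Weierstrass closes the argument. The remaining ingredients --- compactness of $\Omega(G)$, the fact that $f\mapsto f^{o}$ lands in $C(\Omega(G))$ and is a unital algebra homomorphism, and the reduction of density to point separation --- are routine.
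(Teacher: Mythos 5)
Your proof is correct and follows essentially the same route as the paper: Stone--Weierstrass plus a resolvent-type generating function of the $p_{r,l}^{o}$ whose poles and residues recover $\left(\alpha^{(l)},\beta^{(l)}\right)$ (the paper works with $\sum_{r\geq 1}p_{r,l}^{o}z^{-r}$, you with $\sum_{r\geq 2}p_{r,l}^{o}z^{r-2}$ together with $p_{1,l}^{o}=\delta^{(l)}$ --- an immaterial difference). Your caveat about $\theta=0$, where the $\beta$-coordinates are invisible to every $f^{o}$ so that separation genuinely fails unless one reads the statement with $\beta\equiv 0$, is well taken and is a point the paper's proof passes over in silence.
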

\begin{proof} According to the Stone-Weierstrass theorem it is enough to check that the subalgebra of $C\left(\Omega\left(G\right)\right)$ which is the image of $\Sym\left(G\right)$ does not vanish at any point of $\Omega\left(G\right)$, and separates points of $\Omega\left(G\right)$. The first fact is obvious since the image of $\Sym\left(G\right)$ contains a constant non-zero function. Using equation (\ref{4.2new})
we derive the following formula
\begin{equation}\label{pgenf}
\sum\limits_{r=1}^{\infty}\frac{1}{z^r}p_{r,l}^{o}\left(\alpha,\beta,\delta;\theta\right)
=\sum\limits_{i=1}^{\infty}\frac{\alpha_i^{(l)}}{z-\alpha_i^{(l)}}+
\sum\limits_{i=1}^{\infty}\frac{\beta_i^{(l)}}{z+\theta\beta_i^{(l)}}+\frac{1}{z}
\left[\delta^{(l)}-\sum\limits_{i=1}^{\infty}\alpha_i^{(l)}-\sum\limits_{i=1}^{\infty}\beta_i^{(l)}\right],
\end{equation}
where $l=1,\ldots,k$. Denote by $R_l\left(z;\alpha,\beta,\delta;\theta\right)$ the left-hand side of the equation above. Assume that $\left(\hat{\alpha},\hat{\beta},\hat{\delta}\right)\neq\left(\check{\alpha},
\check{\beta},\check{\delta}\right)$ are two distinct points of $\Omega\left(G\right)$.
Then there exists $l$, $l\in\left\{1,\ldots,k\right\}$, such that
$$
\left(\hat{\alpha}^{(l)},\hat{\beta}^{(l)},\hat{\delta}^{(l)}\right)\neq\left(\check{\alpha}^{(l)},
\check{\beta}^{(l)},\check{\delta}^{(l)}\right).
$$
For such $l$ equation (\ref{pgenf}) implies $R_l\left(z;\hat{\alpha},\hat{\beta},\hat{\delta};\theta\right)\neq
R_l\left(z;\check{\alpha},
\check{\beta},\check{\delta};\theta\right)$, and it follows that
for some $r$
$$
p_{r,l}^{o}\left(z;\hat{\alpha},\hat{\beta},\hat{\delta};\theta\right)\neq
p_{r,l}^{o}\left(z;\check{\alpha},
\check{\beta},\check{\delta};\theta\right),
$$
i.e. the image of $\Sym(G)$ in $C\left(\Omega(G)\right)$ separates points.
\end{proof}
\subsection{The embedding of multiple partitions into $\Omega(G)$}
Assume that
$$
\Lambda_n^{(k)}=\left(\lambda^{(1)},\ldots,\lambda^{(k)}\right)\in\Y_n^{(k)}.
$$
Each $\lambda^{(l)}$, $1\leq l\leq k$, can be considered as the collection of boxes of its Young diagram, namely
$$
\lambda^{(l)}=\left\{b_{i,j}^{(l)}:\; 1\leq i\leq l\left(\lambda^{(l)}\right),\;\; 1\leq j\leq\lambda_i^{(l)}\right\}.
$$
Recall that the $\theta$-content of the box $b_{i,j}^{(l)}$ is defined by
$$
c_{\theta}\left(b_{i,j}^{(l)}\right)=(j-1)-\theta(i-1).
$$
A box $b_{i,j}^{(l)}\in\lambda^{(l)}$ is said to be positive\footnote{Here we use terminology of Kerov, Okounkov, and Olshanski \cite{KerovOkounkovOlshanski}, Section 8} if $c_{\theta}\left(b_{i,j}^{(l)}\right)>0$,
and it is called negative if $c_{\theta}\left(b_{i,j}^{(l)}\right)\leq 0$. Each $\lambda^{(l)}$, $1\leq l\leq k$, can be represented as a union of disjoint subsets of its positive and negative boxes, i.e.
$\lambda^{(l)}=\lambda^{(l)}_{+}\cup\lambda^{(l)}_{-}$, where
$$
\lambda_+^{(l)}=\left\{b_{i,j}^{(l)}\in\lambda^{(l)}\biggr|c_{\theta}\left(b_{i,j}^{(l)}\right)>0\right\},\;\;\;
\lambda_-^{(l)}=\left\{b_{i,j}^{(l)}\in\lambda^{(l)}\biggr|c_{\theta}\left(b_{i,j}^{(l)}\right)\leq 0\right\}.
$$
Denote by $r^{(l)}$ the number of rows in $\lambda_+^{(l)}$, and by $a_1^{(l)}$, $a_2^{(l)}$, $\ldots$, $a_{r^{(l)}}^{(l)}$
the lengths of the first, of the second, $\ldots$, of the $r^{(l)}$th row of $\lambda_+^{(l)}$. In addition, denote by $s^{(l)}$ the number of columns in $\lambda^{(l)}_-$, and by $b_1^{(l)}$, $b_2^{(l)}$,\ldots,$b_{s^{(l)}}^{(l)}$ the lengths of the first, of the second, and of the $s^{(l)}$th column of $\lambda_-^{(l)}$. For each $1\leq l\leq k$ we have
$$
a_1^{(l)}\geq\ldots\geq a_{r^{(l)}}^{(l)}>0,\;\; b_1^{(l)}\geq\ldots\geq b_{s^{(l)}}^{(l)}>0.
$$
Also, the following condition is satisfied
\begin{equation}\label{5.3}
\sum\limits_{i=1}^{r^{(l)}}a_i^{(l)}+\sum\limits_{i=1}^{s^{(l)}}b_i^{(l)}=\left|\lambda^{(l)}\right|,\;\;\;
\sum\limits_{l=1}^k\left|\lambda^{(l)}\right|=n.
\end{equation}
If $\theta=1$, then the quantities $a_i^{(l)}$,
$b_i^{(l)}$ are the Frobenius coordinates of the Young diagram $\lambda^{(l)}$.

Set
\begin{equation}\label{5.4}
\frac{1}{n}W_{\Lambda_n^{(k)}}=\left(\alpha\left(\Lambda_n^{(k)},n\right),
\beta\left(\Lambda_n^{(k)},n\right),\delta\left(\Lambda_n^{(k)},n\right)\right),
\end{equation}
where
\begin{equation}\label{5.5}
\begin{split}
&\alpha\left(\Lambda_n^{(k)},n\right)=\left(\alpha^{(1)}\left(\lambda^{(1)},n\right),\ldots,
\alpha^{(k)}\left(\lambda^{(k)},n\right)\right),\\ &\beta\left(\Lambda_n^{(k)},n\right)=\left(\beta^{(1)}\left(\lambda^{(1)},n\right),\ldots,
\beta^{(k)}\left(\lambda^{(k)},n\right)\right),\\
&\delta\left(\Lambda_n^{(k)},n\right)=\left(\delta^{(1)}\left(\lambda^{(1)},n\right),\ldots,
\delta^{(k)}\left(\lambda^{(k)},n\right)\right).
\end{split}
\end{equation}
The entries $\alpha^{(l)}\left(\lambda^{(l)},n\right)$, $\beta^{(l)}\left(\lambda^{(l)},n\right)$, and
$\delta^{(l)}\left(\lambda^{(l)},n\right)$ are defined by
\begin{equation}\label{5.6}
\alpha^{(l)}\left(\lambda^{(l)},n\right)=\left(\frac{a_1^{(l)}}{n},\ldots,\frac{a_{r^{(l)}}^{(l)}}{n},0,0,\ldots\right),
\end{equation}
\begin{equation}\label{5.7}
\beta^{(l)}\left(\lambda^{(l)},n\right)=\left(\frac{b_1^{(l)}}{n},\ldots,\frac{b_{s^{(l)}}^{(l)}}{n},0,0,\ldots\right),
\end{equation}
and
\begin{equation}\label{5.8}
\delta^{(l)}\left(\lambda^{(l)},n\right)=\frac{1}{n}\left|\lambda^{(l)}\right|,
\end{equation}
where $1\leq l\leq k$.
Equation (\ref{5.3}) implies
$$
\sum\limits_{i=1}^{\infty}\left(\alpha_i^{(l)}\left(\lambda^{(l)},n\right)+\beta_i^{(l)}\left(
\lambda^{(l)},n\right)\right)=\delta^{(l)}\left(
\lambda^{(l)},n\right).
$$
We see that $\frac{1}{n}W_{\Lambda_n^{(k)}}$ defined by equation (\ref{5.4}) is an element of $\Omega(G)$. Moreover, it is not hard to see that each point of $\Omega\left(G\right)$ can be approximated by points $\frac{1}{n}W_{\Lambda_n^{(k)}}$ with $\Lambda_n^{(k)}\in\Y_n^{(k)}$.
\subsection{Asymptotics of $f^{\ast}_{\theta}\left(\Lambda_n^{(k)}\right)$ as $n\rightarrow\infty$}
Here we prove the following Theorem.
\begin{thm}\label{Theorem4.1}
To each $\Lambda_n^{(k)}=\left(\lambda^{(1)},\ldots,\lambda^{(k)}\right)\in\Y_n^{(k)}$
associate the decreasing sequences
 $x_{c_1}$, $\ldots$, $x_{c_k}$ of nonnegative real numbers according to the formulae
$$
x_{c_1}=\left(\lambda^{(1)}_1,\ldots,\lambda_{l(\lambda^{(1)})}^{(1)},0,0,\ldots\right)
,\ldots,
x_{c_k}=\left(\lambda^{(k)}_1,\ldots,\lambda_{l(\lambda^{(k)})}^{(k)},0,0,\ldots\right).
$$
Then each element of $\Sym^{\ast}_{\theta}\left(G\right)$ gives rise to a function $f^{\ast}_{\theta}:\Y_n^{(k)}\rightarrow\R$. Set $f=\left[f_{\theta}^{\ast}\right]$, where
$[.]$ denotes the map from $\Sym_{\theta}^{\ast}\left(G\right)$ into $\Sym\left(G\right)$ defined in Section \ref{Section5.1}.
Let $f^{o}$ be the image of $f$ under the algebra homomorphism   of  $\Sym(G)$ into   $C\left(\Omega(G)\right)$  defined by equations (\ref{4.2new}), (\ref{4.3new}).
Then for each $\Lambda_n^{(k)}\in\Y_n^{(k)}$ we can write
\begin{equation}\label{5.9}
\frac{1}{n^{\deg\left({f^{\ast}_{\theta}}\right)}}f^{\ast}_{\theta}\left(\Lambda_n^{(k)}\right)
=f^{0}\left(\alpha\left(\Lambda_n^{(k)},n\right),
\beta\left(\Lambda_n^{(k)},n\right),\delta\left(\Lambda_n^{(k)},n\right);\theta\right)
+O\left(\frac{1}{\sqrt{n}}\right),
\end{equation}
as $n\rightarrow\infty$,
where the $O\left(\frac{1}{\sqrt{n}}\right)$ bound for the remainder depends only on $f^{\ast}_{\theta}$ and it is uniform in $\Lambda_n^{(k)}$. Here  $\alpha\left(\Lambda_n^{(k)},n\right)$,
$\beta\left(\Lambda_n^{(k)},n\right)$, $\delta\left(\Lambda_n^{(k)},n\right)$ are defined by equations (\ref{5.5})-(\ref{5.8}).
\end{thm}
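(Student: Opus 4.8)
The plan is to derive Theorem~\ref{Theorem4.1} from a single-generator asymptotic estimate together with a multiplicativity argument, extending the one-component analysis of Kerov, Okounkov, and Olshanski \cite{KerovOkounkovOlshanski}, Section~8, to $k$ components. First I would record the crude bound $\left|p_r^*(c_l;\theta)\big|_{\Lambda_n^{(k)}}\right|\le C_r\,n^{r}$, uniform in $\Lambda_n^{(k)}$ with $C_r$ depending only on $r$ and $\theta$: it follows from $|\lambda^{(l)}|\le n$, $l(\lambda^{(l)})\le n$, a mean-value estimate on $(\lambda_i^{(l)}-\theta i)^r-(-\theta i)^r$, and $\sum_i(\lambda_i^{(l)})^{r}\le|\lambda^{(l)}|^{r}$. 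Decomposing $f_\theta^*$ into its top homogeneous component $f^*_{\theta,d}$ (with $d=\deg f_\theta^*$) plus lower-degree terms, the crude bound shows that each monomial of degree $d'<d$ contributes $O(n^{d'})=O(n^{d-1})$ to $f_\theta^*(\Lambda_n^{(k)})$, hence $O(1/n)$ after division by $n^{d}$; since $[f_\theta^*]=[f^*_{\theta,d}]$ and $f^o$ sees only the top component, it suffices to prove (\ref{5.9}) for $f_\theta^*=f^*_{\theta,d}$, and then, by linearity, for a single monomial $M=\prod_{j=1}^m p_{r_j}^*(c_{l_j};\theta)$ with $r_1+\ldots+r_m=d$. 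For such $M$ one has $f=[M]=\prod_j p_{r_j}(c_{l_j})$ and, by (\ref{4.2new})--(\ref{4.3new}) and multiplicativity of $\varphi_{\alpha,\beta,\delta}$, $f^o=\prod_j p_{r_j,l_j}^o(\,\cdot\,;\theta)$.

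The core of the proof is the single-generator estimate: for fixed $r\ge 1$ and $l$,
\[
p_r^*(c_l;\theta)\big|_{\Lambda_n^{(k)}}=\sum_i\bigl(a_i^{(l)}\bigr)^{r}+(-\theta)^{r-1}\sum_i\bigl(b_i^{(l)}\bigr)^{r}+O\!\left(|\lambda^{(l)}|^{r-1}\right),
\]
uniformly in $\Lambda_n^{(k)}$, with $a_i^{(l)},b_i^{(l)}$ as in (\ref{5.6})--(\ref{5.7}); for $r=1$ the remainder is absent, $p_1^*(c_l;\theta)|_{\lambda}=\sum_i\lambda_i^{(l)}=|\lambda^{(l)}|$. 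For $r\ge 2$ I would telescope $(\lambda_i^{(l)}-\theta i)^r-(-\theta i)^r=\sum_{j=1}^{\lambda_i^{(l)}}\bigl[(j-\theta i)^r-(j-1-\theta i)^r\bigr]$ to rewrite the left-hand side as a box sum $\sum_{b=(i,j)\in\lambda^{(l)}} g_r\!\left(c_\theta(b)\right)$, where $c_\theta(b)$ is the $\theta$-content and $g_r(c)=(c+1-\theta)^r-(c-\theta)^r=\phi(c+1)-\phi(c)$, $\phi(c)=(c-\theta)^r$, is a polynomial in $c$ of degree $r-1$ with leading term $r\,c^{r-1}$. Splitting $\lambda^{(l)}=\lambda^{(l)}_+\cup\lambda^{(l)}_-$ into positive and negative boxes, I would sum $g_r$ along each row of $\lambda^{(l)}_+$; since $g_r$ is a unit-step forward difference this telescopes to $(\lambda_i^{(l)}-\theta i)^r-(c_{i,0}-\theta)^r$, where $c_{i,0}\in(0,1]$ is the content of the leftmost positive box of row $i$ and $c_{i,0}+a_i^{(l)}=\lambda_i^{(l)}-\theta(i-1)$. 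Comparing this with $\bigl(a_i^{(l)}\bigr)^r=\bigl((\lambda_i^{(l)}-\theta i)+(\theta-c_{i,0})\bigr)^r$, expanding the bounded shift $\theta-c_{i,0}$, and summing with the help of $\sum_i(\lambda_i^{(l)})^{r-1}\le|\lambda^{(l)}|^{r-1}$ and $(\#\text{positive rows})\le|\lambda^{(l)}|$, shows the positive contribution equals $\sum_i(a_i^{(l)})^r+O(|\lambda^{(l)}|^{r-1})$.

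For the negative boxes I would instead sum $g_r$ down each column of $\lambda^{(l)}_-$: along a column the content decreases by $\theta$ at each step, so the column sum is $\sum_{m\ge 0}g_r(d_{j,0}-m\theta)$ with $d_{j,0}\in(-\theta,0]$, which is a step-$\theta$ Riemann sum for $\tfrac1\theta\int g_r(x)\,dx$. Comparing with the integral (whose antiderivative $\Phi(x+1)-\Phi(x)$, $\Phi'=\phi$, is a degree-$r$ polynomial with leading term $x^r$) yields $(-\theta)^{r-1}\bigl(b_j^{(l)}\bigr)^r+O\bigl((b_j^{(l)})^{r-1}\bigr)$ per column, hence $(-\theta)^{r-1}\sum_i(b_i^{(l)})^r+O(|\lambda^{(l)}|^{r-1})$ in total, using $\sum_j(b_j^{(l)})^{r-1}\le|\lambda^{(l)}_-|^{r-1}\le|\lambda^{(l)}|^{r-1}$. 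Adding the two contributions gives the displayed single-generator estimate; and since $\alpha_i^{(l)}=a_i^{(l)}/n$, $\beta_i^{(l)}=b_i^{(l)}/n$, $\delta^{(l)}=|\lambda^{(l)}|/n$, formula (\ref{pnol}) shows that the main term equals $n^{r}\,p_{r,l}^o\!\left(\tfrac1n W_{\Lambda_n^{(k)}};\theta\right)$ for every $r\ge 1$.

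To finish, I would assemble the monomial. Writing $A_j=p_{r_j}^*(c_{l_j};\theta)\big|_{\Lambda_n^{(k)}}=B_j+E_j$ with $B_j=n^{r_j}p_{r_j,l_j}^o(\tfrac1n W_{\Lambda_n^{(k)}};\theta)$, $|B_j|\le C n^{r_j}$, and $E_j=O(n^{r_j-1})$, and expanding $\prod_j(B_j+E_j)$, every term containing some factor $E_j$ is $O(n^{d-1})$, so $\prod_j A_j=\prod_j B_j+O(n^{d-1})=n^{d}\,f^o\!\left(\tfrac1n W_{\Lambda_n^{(k)}};\theta\right)+O(n^{d-1})$. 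Dividing by $n^{d}$ gives (\ref{5.9}) with remainder $O(1/n)$, in particular the asserted $O(1/\sqrt n)$, the implied constant depending only on the finitely many monomials and coefficients in $f_\theta^*$ and uniform in $\Lambda_n^{(k)}$. The main obstacle is the uniform $O(|\lambda^{(l)}|^{r-1})$ bookkeeping in the single-generator estimate: because $\lambda^{(l)}_+$ and $\lambda^{(l)}_-$ can have as many as $|\lambda^{(l)}|\asymp n$ nonempty rows, resp.\ columns, a term-by-term bound on the per-row or per-column corrections only gives $O(|\lambda^{(l)}|^{r})$, and one must instead exploit the telescoping within rows and the Riemann-sum cancellation within columns so that the errors accumulate only to $O\!\bigl(\sum_i(\lambda_i^{(l)})^{r-1}\bigr)=O(|\lambda^{(l)}|^{r-1})$, resp.\ $O\!\bigl(\sum_j(b_j^{(l)})^{r-1}\bigr)=O(|\lambda^{(l)}|^{r-1})$.
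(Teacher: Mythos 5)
Your argument is correct, and its overall architecture is the same as the paper's: reduce to the top homogeneous component, then by linearity to a single monomial in the generators, prove the asymptotics for one generator $p_r^{\ast}(c_l;\theta)$, and assemble the product, using that each $p_{r,l}^{o}$ is bounded on $\Omega(G)$ so that cross terms are of lower order (the paper states the monomial and linear-combination steps exactly as you do, in (\ref{5.14})--(\ref{5.15})). The genuine difference is the single-generator input: the paper simply quotes Theorem 7.1 of Kerov, Okounkov, and Olshanski, i.e. inequality (\ref{5.11}), which gives the normalized error $O(n^{-1/2})$ componentwise, whereas you re-derive this estimate from scratch from the exact identity $p_r^{\ast}(\lambda;\theta)=\sum_{b\in\lambda}\bigl[(c_\theta(b)+1-\theta)^r-(c_\theta(b)-\theta)^r\bigr]$, telescoping along the rows of $\lambda_{+}$ and using a step-$\theta$ Riemann sum along the columns of $\lambda_{-}$. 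This buys a self-contained proof and in fact a sharper remainder, of order $1/n$ rather than $1/\sqrt{n}$, in (\ref{5.9}) (sharp, as a single column of length $n$ already produces an error of order $n^{r-1}$), at the price of redoing the analysis that the paper outsources. Two small caveats you should add: your column estimate divides by $\theta$, so as written it requires $\theta>0$; the case $\theta=0$, which the paper needs for multiple partition structures, is handled by the one-line remark that then every negative box has content $0$, the box sum over $\lambda_{-}^{(l)}$ equals $l(\lambda^{(l)})\le n\le n^{r-1}$, while the corresponding main term $(-\theta)^{r-1}\sum_j\bigl(b_j^{(l)}\bigr)^r$ vanishes for $r\ge 2$ and the case $r=1$ is exact. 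Also your implied constants depend on $\theta$ (and degenerate as $\theta\to 0$), which is harmless for fixed $\theta$ but worth stating explicitly. With these remarks your proof is complete and slightly stronger than the stated bound.
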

\begin{rem}(a) For $k=1$ and arbitrary $\theta\geq 0$ the result is obtained in Kerov, Okounkov, and Olshanski \cite{KerovOkounkovOlshanski}, Theorem 7.1. We use Theorem 7.1 of Ref. \cite{KerovOkounkovOlshanski} in the proof of our Theorem \ref{Theorem4.1} below.\\
(b) A pedagogical account of the proof of the simplest case (where $k=1$, $\theta=1$) can be found in Borodin and Olshanski \cite{BorodinOlshanskiBook}, Chapter 6, where the relation with the Thoma theorem for the infinite symmetric group is explained.
\end{rem}
\begin{proof} Assume that $f_{\theta}^{\ast}=p_r^{\ast}\left(c_l;\theta\right)$ for some
$l\in\left\{1,\ldots,k\right\}$. Then
\begin{equation}\label{FequalPr}
f_{\theta}^{\ast}\left(\Lambda_n^{(k)}\right)=
p_r^{\ast}\left(\lambda^{(l)}_1,\ldots,\lambda^{(l)}_{l\left(\lambda^{(l)}\right)};\theta\right),
\end{equation}
where the right-hand side of equation (\ref{FequalPr}) is defined by equation (\ref{DefinitionOfPrStar}),
and $\deg(f_{\theta}^{\ast})=r$.
Theorem 7.1 in Kerov, Okounkov, and Olshanski \cite{KerovOkounkovOlshanski} gives
\begin{equation}\label{5.11}
\left|\frac{p_r^{\ast}\left(\lambda^{(l)}_1,\ldots,\lambda^{(l)}_{l\left(\lambda^{(l)}\right)};\theta\right)}{n^{r}}
-\frac{\sum\limits_{j=1}^{r^{(l)}}\left(a_j^{(l)}\right)^r+(-\theta)^{r-1}\sum\limits_{j=1}^{s^{(l)}}
\left(b_j^{(l)}\right)^r}{n^r}\right|
\leq \frac{C}{\sqrt{n}},
\end{equation}
where $1\leq l\leq k$, and $C$ depends only on $r$ and $\theta$,
$n=|\lambda^{(1)}|+\ldots+|\lambda^{(k)}|$, and $C$ does not depend on $\Lambda_n^{(k)}=\left(\lambda^{(1)},\ldots,\lambda^{(k)}\right)$.
On the other hand, $[p_r^{\ast}(c_l;\theta)]=p_r(c_l)$, and the image
$$
p_{r,l}^{o}\left(\alpha\left(\Lambda_n^{(k)},n\right),
\beta\left(\Lambda_n^{(k)},n\right),\delta\left(\Lambda_n^{(k)},n\right);\theta\right)
$$ of $p_r\left(c_l\right)$
under the algebra homomorphism
$f\rightarrow f^{o}\left(.\right)$ of the algebra $\Sym(G)$ into the algebra $C\left(\Omega(G)\right)$  is given by
\begin{equation}\label{5.12}
\begin{split}
&p_{r,l}^{o}\left(\alpha\left(\Lambda_n^{(k)},n\right),
\beta\left(\Lambda_n^{(k)},n\right),\delta\left(\Lambda_n^{(k)},n\right);\theta\right)\\
&=
\left\{
  \begin{array}{ll}
   \frac{1}{n^{r}}\left(
   \sum\limits_{j=1}^{r^{(l)}}\left(a_j^{(l)}\right)^r+(-\theta)^{r-1}\sum\limits_{j=1}^{s^{(l)}}
\left(b_j^{(l)}\right)^r\right), & r=2,3,\ldots, \\
    \frac{1}{n}\left|\lambda^{(l)}\right|, & r=1,
  \end{array}
\right.
\end{split}
\end{equation}
where we have used equations (\ref{4.2new}), (\ref{4.3new}),  (\ref{5.5})-(\ref{5.8}).
Since $\left|\lambda^{(l)}\right|=\sum\limits_{j=1}^{r^{(l)}}
a_j^{(l)}+\sum\limits_{j=1}^{s^{(l)}}
b_j^{(l)}$, we see that (\ref{5.11}) and (\ref{5.12}) imply
\begin{equation}
\left|\frac{p_r^{\ast}\left(\lambda^{(l)}_1,\ldots,\lambda^{(l)}_{l\left(\lambda^{(l)}\right)};\theta\right)}{n^{r}}
-p_{r,l}^{o}\left(\alpha\left(\Lambda_n^{(k)},n\right),
\beta\left(\Lambda_n^{(k)},n\right),\delta\left(\Lambda_n^{(k)},n\right);\theta\right)\right|
\leq \frac{C}{\sqrt{n}}.
\end{equation}
We conclude that (\ref{5.9}) holds true in the case $f_{\theta}^{\ast}=p_r^{\ast}\left(c_l;\theta\right)$, $l\in\left\{1,\ldots,k\right\}$.

Next, we observe that if $f^{\ast}_{\theta}$ is a monomial in variables $p_r^{\ast}(c_l;\theta)$
(where $l\in\{1,\ldots,k\}$ and $r=1,2,\ldots$), then equation (\ref{5.9}) is satisfied.

Finally, assume that $f^{\ast}_{\theta}$ is an arbitrary element of $\Sym^{\ast}_{\theta}(G)$, and  $\deg(f^{\ast}_{\theta})=m$.
Then we can write $f^{\ast}_{\theta}$
as a linear combination of monomials $f_{1,\theta}^{\ast}$, $\ldots$, $f_{p,\theta}^{\ast}$ in variables $p_r^{\ast}(c_l;\theta)$,
$$
f^{\ast}_{\theta}=C_1f_{1,\theta}^{\ast}+\ldots+C_pf_{p,\theta}^{\ast},
$$
where
$$
0\leq\deg(f_{1,\theta}^{\ast})\leq\ldots\leq\deg(f_{p,\theta}^{\ast})=m,
$$
and $C_1$, $\ldots$, $C_p$ are some real coefficients.
We obtain
\begin{equation}\label{5.14}
\begin{split}
&\frac{1}{n^m}f^{\ast}_{\theta}\left(\Lambda_n^{(k)}\right)=\sum\limits_{j=1}^p\frac{C_j}{n^m}
f_{j,\theta}^{\ast}\left(\Lambda_n^{(k)}\right)\\
&=\sum\limits_{j=1}^pC_j\frac{n^{\deg(f_{j,\theta}^{\ast})}}{n^m}
\left[f_j^{o}\left(\alpha\left(\Lambda_n^{(k)},n\right),
\beta\left(\Lambda_n^{(k)},n\right),\delta\left(\Lambda_n^{(k)},n\right);\theta\right)+O\left(\frac{1}{\sqrt{n}}\right)\right],
\end{split}
\end{equation}
where in the second equality we have used (\ref{5.9}) (which is valid for each monomial $f_{j,\theta}^{\ast}$).

On the other hand,
$$
f=\left[f^{\ast}_{\theta}\right]=\underset{j:\;\deg(f_{j,\theta}^{\ast})=m}{\sum\limits_{j=1}^p}C_jf_{j},
$$
where $f_{j}=[f_{j,\theta}^{\ast}]$. Therefore,
\begin{equation}\label{5.15}
\begin{split}
&f^{o}\left(\alpha\left(\Lambda_n^{(k)},n\right),
\beta\left(\Lambda_n^{(k)},n\right),\delta\left(\Lambda_n^{(k)},n\right);\theta\right)\\
&=\underset{j:\;\deg(f_{j,\theta}^{\ast})=m}{\sum\limits_{j=1}^p}C_jf^{o}_j\left(\alpha\left(\Lambda_n^{(k)},n\right),
\beta\left(\Lambda_n^{(k)},n\right),\delta\left(\Lambda_n^{(k)},n\right);\theta\right).
\end{split}
\end{equation}
Comparing (\ref{5.14}) and (\ref{5.15}) we see that (\ref{5.9}) holds true.
\end{proof}
\section{Proof of Proposition \ref{PropositionPieriRule}}\label{SectionProofPieri}
Recall that the power sum symmetric functions
$$
\left\{p_r\left(\gamma^j\right):\; j=1,\ldots,k;\; r\geq 1\right\}
$$ and
$$
\left\{p_r\left(c_i\right):\;i=1,\ldots,k;\; r\geq 1\right\}
$$
are related by equation (\ref{PrGammaInTermsPrC}). In this equation
$\gamma^j\left(c_i\right)$, $j=1,\ldots, k$ are the irreducible characters of $G$ evaluated at the conjugacy class $c_i$. In order to express $p_r\left(c_i\right)$ in terms of $p_r\left(\gamma^j\right)$ we use the following property of $\gamma^j$:
\begin{equation}\label{SecondOrthogonalityRelationOfCharacters}
\sum\limits_{j=1}^{k}\frac{|G|}{\left|c_l\right|}\gamma^{j}\left(c_l\right)
\overline{\gamma^j\left(c_i\right)}=\delta_{l,i},\;\; 1\leq l,i\leq k.
\end{equation}
Application of (\ref{SecondOrthogonalityRelationOfCharacters}) to (\ref{PrGammaInTermsPrC}) gives
\begin{equation}\label{1.3}
p_r\left(c_i\right)=\sum\limits_{j=1}^k\overline{\gamma^j}\left(c_i\right)p_r\left(\gamma^j\right),\;\; r\geq 1.
\end{equation}
In particular,
\begin{equation}\label{1.4}
p_r\left(c_1\right)=\sum\limits_{j=1}^k d_j p_r\left(\gamma^j\right),\;\; r\geq 1.
\end{equation}
In addition, we can use the simplest particular case of the Pieri formula for the Jack symmetric functions, which in our notation reads
\begin{equation}\label{1.5}
 p_1\left(\gamma^j\right)P_{\mu^{(j)}}\left(\gamma^j;\theta\right)
 =\sum\limits_{\lambda^{(j)}: \mu^{(j)}\nearrow\lambda^{(j)}}
 \chi_{\theta}\left(\mu^{(j)},\lambda^{(j)}\right)P_{\lambda^{(j)}}\left(\gamma^j;\theta\right).
\end{equation}
Equations (\ref{BIGJACK}),  (\ref{1.4}), and (\ref{1.5}) imply (\ref{P.1.1.1}).
\qed
\section{Proof of Theorem \ref{MAINTHEOREM}}\label{SECTIONPROOFOFMAINTHEOREM}
Our approach to Theorem \ref{MAINTHEOREM} consists of several steps. First, we introduce
the Martin kernel $K_{\theta}\left(\widetilde{\Lambda}_m^{(k)},\Lambda_n^{(k)}\right)$ associated with the branching graph $\Gamma_{\theta}\left(G\right)$, and obtain an explicit formula  for
$K_{\theta}\left(\widetilde{\Lambda}_m^{(k)},\Lambda_n^{(k)}\right)$ in terms of the $\theta$-shifted Jack symmetric polynomials. The importance of the Martin kernel is due to the following fact.
Let $\varphi:\;\Gamma_{\theta}\left(G\right)\rightarrow\R_{\geq 0}$ be a harmonic function on $\Gamma_{\theta}\left(G\right)$, see Definition \ref{DEFINITIONHARMONICFUNCTIONS}. Given $\varphi$ we construct a coherent system of probability measures $\left(\mathcal{M}_n^{(k)}\right)_{n=1}^{\infty}$ associated with $\varphi$ via equation (\ref{CoherentSystemsHarmonicFunctions}). It can be checked that this system  satisfies the equation
\begin{equation}\label{IMPORTANCEMARTINKERNEL}
\sum\limits_{\Lambda_n^{(k)}\in\Y_n^{(k)}}\DIM_{\theta}\left(\widetilde{\Lambda}_m^{(k)}\right)
K_{\theta}\left(\widetilde{\Lambda}_m^{(k)},\Lambda_n^{(k)}\right)
\mathcal{M}_n^{(k)}\left(\Lambda_n^{(k)}\right)
=\mathcal{M}_m^{(k)}\left(\widetilde{\Lambda}_m^{(k)}\right),
\end{equation}
where $m\leq n$, and $\widetilde{\Lambda}_m^{(k)}\in\Y_m^{(k)}$. This equation is a starting point to derive  integral representation (\ref{HarmonicFunctionProbabilityMeasure}) for the harmonic function $\varphi$. Second, we compute explicitly
the asymptotics of the Martin kernel $K_{\theta}\left(\widetilde{\Lambda}_m^{(k)},\Lambda_n^{(k)}\right)$
as $n\rightarrow\infty$, and $m$ remains a fixed integer. Using this asymptotics we show that equation
(\ref{IMPORTANCEMARTINKERNEL}) leads to integral representation (\ref{HarmonicFunctionProbabilityMeasure})
of the harmonic function $\varphi$. Third, we prove that each probability measure on the set
$\Omega\left(G\right)$ defined by equation (\ref{THEGENERALIZEDTHOMASET}) gives a harmonic function
on $\Gamma_{\theta}\left(G\right)$ via equation (\ref{HarmonicFunctionProbabilityMeasure}).
\subsection{The Martin Kernel}
Recall that the dimension function $\DIM_{\theta}\left(\Lambda_m^{(k)},\Lambda_n^{(k)}\right)$ associated with the branching graph $\Gamma_{\theta}\left(G\right)$ was introduced in Section \ref{SectionDimensionFunction}.
\begin{defn}The Martin kernel $K_{\theta}\left(\widetilde{\Lambda}_m^{(k)},\Lambda_n^{(k)}\right)$ for $\Gamma_{\theta}\left(G\right)$ is defined by
\begin{equation}\label{MartinKernelDefinition}
K_{\theta}\left(\widetilde{\Lambda}_m^{(k)},\Lambda_n^{(k)}\right)=
\frac{\DIM_{\theta}
\left(\widetilde{\Lambda}_m^{(k)},\Lambda_n^{(k)}\right)}{\DIM_{\theta}
\left(\Lambda_n^{(k)}\right)},
\end{equation}
where $m\leq n$, $\tLambda_m^{(k)}\in\Y_m^{(k)}$, and $\Lambda_n^{(k)}\in\Y_n^{(k)}$.
\end{defn}
To investigate the asymptotics of the Martin kernel  it is desirable to obtain a convenient explicit formula for the
 dimension function $\DIM_{\theta}
\left(\widetilde{\Lambda}_m^{(k)},\Lambda_n^{(k)}\right)$. The next Proposition
gives $\DIM_{\theta}
\left(\widetilde{\Lambda}_m^{(k)},\Lambda_n^{(k)}\right)$ in terms of the dimension function of the Young graph with the Jack edge multiplicities.
 \begin{prop}\label{PROPOSITIONFORMULADIMENSIONFUNCTION}The dimension function $\DIM_{\theta}
\left(\widetilde{\Lambda}_m^{(k)},\Lambda_n^{(k)}\right)$ of the branching graph $\Gamma_{\theta}\left(G\right)$
can be written as
\begin{equation}\label{3.3}
\DIM_{\theta}
\left(\widetilde{\Lambda}_m^{(k)},\Lambda_n^{(k)}\right)=(n-m)!\prod\limits_{l=1}^k\frac{d_l^{|\lambda^{(l)}|-|\mu^{(l)}|}}{\left(
|\lambda^{(l)}|-|\mu^{(l)}|\right)!}\dim_{\theta}\left(\mu^{(l)},\lambda^{(l)}\right),
\end{equation}
where $m\leq n$,  $\Lambda_n^{(k)}=\left(\lambda^{(1)},\ldots,\lambda^{(k)}\right)$, $\widetilde{\Lambda}_m^{(k)}=\left(\mu^{(1)},\ldots,\mu^{(k)}\right)$,
and $d_1$, $\ldots$, $d_k$ are dimensions of the irreducible representations of $G$ with characters $\gamma^{1}$, $\ldots$, $\gamma^k$, respectively. Here $\dim_{\theta}\left(\mu^{(l)},\lambda^{(l)}\right)$
denotes the dimension function of the Young graph with the Jack edge multiplicities defined by the Jack edge multiplicity function $\chi_{\theta}\left(.,.\right)$. The Jack edge multiplicity function $\chi_{\theta}\left(.,.\right)$ is given by equation (\ref{TheJackMultiplicityFunction}).
\end{prop}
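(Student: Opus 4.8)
The plan is to prove (\ref{3.3}) by induction on $n-m$, using the recursive definition (\ref{DimensionFunction}) of $\DIM_\theta$ together with the analogous recursion satisfied by the Jack dimension function $\dim_\theta$ on the Young graph with Jack edge multiplicities. Before carrying this out it is worth recording the combinatorial picture that makes the formula transparent. By (\ref{DimensionFunction}), $\DIM_\theta(\widetilde\Lambda_m^{(k)},\Lambda_n^{(k)})$ equals the sum, over all oriented paths in $\Gamma_\theta(G)$ from $\widetilde\Lambda_m^{(k)}$ to $\Lambda_n^{(k)}$, of the product of the edge multiplicities $\Upsilon_\theta$ along the path. Such a path is a sequence of $n-m$ steps, each adding one box to exactly one of the $k$ components; by (\ref{Upsilon}) a step enlarging the $l$-th component along an edge $\nu\nearrow\nu'$ of the $l$-th Young graph carries multiplicity $d_l\,\chi_\theta(\nu,\nu')$. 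Hence the weight of a path factors as $\prod_{l=1}^k d_l^{\,|\lambda^{(l)}|-|\mu^{(l)}|}$ times the product over $l$ of the $\chi_\theta$-weight of the subpath from $\mu^{(l)}$ to $\lambda^{(l)}$ that it induces. Summing first over the $\binom{n-m}{|\lambda^{(1)}|-|\mu^{(1)}|,\ldots,|\lambda^{(k)}|-|\mu^{(k)}|}$ interleavings of the steps among the components, and then independently over the subpath in each Young graph, produces (\ref{3.3}).

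For the formal argument I would induct on $n-m$. When $n=m$ both sides equal $1$ (the empty path, $0!=1$, $\dim_\theta(\mu^{(l)},\mu^{(l)})=1$). For the inductive step, apply (\ref{DimensionFunction}) and split the sum over predecessors $\Theta_{n-1}^{(k)}\nearrow\widetilde\Lambda_n^{(k)}$ according to which component $l$ the deleted box lies in; writing $\Lambda_n^{(k)}=(\lambda^{(1)},\ldots,\lambda^{(k)})$, the predecessors in the $l$-th block are obtained by removing a corner box of $\lambda^{(l)}$, that is, they run over $\nu^{(l)}\nearrow\lambda^{(l)}$ with all other components unchanged, and they carry multiplicity $d_l\,\chi_\theta(\nu^{(l)},\lambda^{(l)})$. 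Invoking the inductive hypothesis for each such predecessor and then the defining recursion of $\dim_\theta$ on the Young graph, namely $\sum_{\nu^{(l)}\nearrow\lambda^{(l)}}\chi_\theta(\nu^{(l)},\lambda^{(l)})\,\dim_\theta(\mu^{(l)},\nu^{(l)})=\dim_\theta(\mu^{(l)},\lambda^{(l)})$, the $l$-th block collapses to
\[
(n-1-m)!\,\bigl(|\lambda^{(l)}|-|\mu^{(l)}|\bigr)\prod_{j=1}^{k}\frac{d_j^{\,|\lambda^{(j)}|-|\mu^{(j)}|}}{\bigl(|\lambda^{(j)}|-|\mu^{(j)}|\bigr)!}\,\dim_\theta\bigl(\mu^{(j)},\lambda^{(j)}\bigr).
\]
Summing over $l=1,\ldots,k$ and using $\sum_{l=1}^{k}\bigl(|\lambda^{(l)}|-|\mu^{(l)}|\bigr)=n-m$ turns the prefactor $(n-1-m)!\,(n-m)$ into $(n-m)!$, which is exactly (\ref{3.3}).

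This step is essentially bookkeeping, so I do not anticipate a genuine obstacle; the points needing care are (i) the vanishing cases — there is no oriented path $\widetilde\Lambda_m^{(k)}\to\Lambda_n^{(k)}$ in $\Gamma_\theta(G)$ precisely when $\mu^{(l)}\not\subseteq\lambda^{(l)}$ for some $l$, and in that event $\dim_\theta(\mu^{(l)},\lambda^{(l)})=0$, so both sides of (\ref{3.3}) vanish; and (ii) keeping the per-component factorials $\bigl(|\lambda^{(l)}|-|\mu^{(l)}|\bigr)!$ and powers $d_l^{\,|\lambda^{(l)}|-|\mu^{(l)}|}$ correctly aligned with the single box added at each step (one factor $d_l$ per step into component $l$, and the multinomial coefficient accounting for the interleavings). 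If one prefers to avoid the induction, the path-decomposition sketched in the first paragraph is itself a complete proof, the only nontrivial point being the multinomial count of interleavings, which is immediate.
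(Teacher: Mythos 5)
Your proof is correct, but it follows a genuinely different route from the paper. You work directly with the recursive definition (\ref{DimensionFunction}) of $\DIM_{\theta}$: either by the path decomposition (every oriented path from $\widetilde{\Lambda}_m^{(k)}$ to $\Lambda_n^{(k)}$ factors into $k$ subpaths in the individual Young graphs, weighted by one factor $d_l$ per step into the $l$-th component, and the interleavings are counted by the multinomial coefficient), or equivalently by induction on $n-m$, collapsing the predecessor sum in each component via the recursion $\sum_{\nu^{(l)}\nearrow\lambda^{(l)}}\chi_{\theta}\left(\nu^{(l)},\lambda^{(l)}\right)\dim_{\theta}\left(\mu^{(l)},\nu^{(l)}\right)=\dim_{\theta}\left(\mu^{(l)},\lambda^{(l)}\right)$; your bookkeeping of the factor $(n-1-m)!\,(n-m)=(n-m)!$ and of the degenerate cases $\mu^{(l)}\not\subseteq\lambda^{(l)}$ is sound. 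The paper instead argues inside the algebra $\Sym(G)$: iterating the Pieri-type rule of Proposition \ref{PropositionPieriRule} gives $\left(p_1(c_1)\right)^{n-m}\PB_{\widetilde{\Lambda}_m^{(k)}}=\sum_{\Lambda_n^{(k)}}\DIM_{\theta}\left(\widetilde{\Lambda}_m^{(k)},\Lambda_n^{(k)}\right)\PB_{\Lambda_n^{(k)}}$, while expanding $p_1(c_1)=\sum_j d_j p_1\left(\gamma^j\right)$ by the multinomial theorem and applying the ordinary Jack Pieri formula in each variable set yields a second expansion; comparing coefficients of the linearly independent functions $\PB_{\Lambda_n^{(k)}}$ gives (\ref{3.3}). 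Your argument is more elementary and self-contained (no symmetric functions, no appeal to linear independence of the multi-Jack basis) and makes the multinomial structure of the formula transparent; the paper's argument reuses the Pieri machinery it has already set up and keeps the computation at the level of identities in $\Sym(G)$, which fits the way the dimension function is exploited later in the asymptotic analysis of the Martin kernel.
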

 \begin{proof}
If $m=n-1$, then equation  (\ref{DimensionFunction}) gives
\begin{equation}
\DIM_{\theta}\left(\widetilde{\Lambda}_{n-1}^{(k)},\Lambda_n^{(k)}\right)=\Upsilon_{\theta}\left(\widetilde{\Lambda}_{n-1}^{(k)},\Lambda_n^{(k)}\right),
\end{equation}
where $\Upsilon_{\theta}\left(.,.\right)$ is the multiplicity function of $\Gamma_{\theta}\left(G\right)$ defined by equation (\ref{Upsilon}). Taking this into account, we observe that the Pieri-type rule
(\ref{P.1.1.1}) together with equation (\ref{DimensionFunction}) imply
\begin{equation}\label{3.4}
\left(p_1\left(c_1\right)\right)^{n-m}\PB_{\widetilde{\Lambda}_m^{(k)}}\left(\gamma^1,\ldots,\gamma^k;\theta\right)=
\sum\limits_{\Lambda_n^{(k)}\in\Y_n\left(G\right)}\DIM_{\theta}\left(\widetilde{\Lambda}_m^{(k)},\Lambda_n^{(k)}\right)
\PB_{\Lambda_n^{(k)}}\left(\gamma^1,\ldots,\gamma^k;\theta\right).
\end{equation}
Since $p_1\left(c_1\right)=\sum\limits_{j=1}^kd_jp_1\left(\gamma^j\right)$, we can write
\begin{equation}
\left(p_1\left(c_1\right)\right)^{n-m}=\underset{m_1+\ldots+m_k=n-m}{\sum\limits_{m_1,\ldots,m_k}}
\frac{(n-m)!}{m_1!\ldots m_k!}d_1^{m_1}\ldots d_k^{m_k}\left(p_1\left(\gamma^1\right)\right)^{m_1}\ldots\left(p_1\left(\gamma^k\right)\right)^{m_k},
\nonumber
\end{equation}
which gives
\begin{equation}
\begin{split}
&\left(p_1\left(c_1\right)\right)^{n-m}\PB_{\widetilde{\Lambda}_{m}^{(k)}}\left(\gamma^1,\ldots,\gamma^k;\theta\right)=
\underset{m_1+\ldots+m_k=n-m}{\sum\limits_{m_1,\ldots,m_k}}
\frac{(n-m)!}{m_1!\ldots m_k!}d_1^{m_1}\ldots d_k^{m_k}\\
&\times\left[\left(p_1\left(\gamma^1\right)\right)^{m_1}P_{\mu^{(1)}}\left(\gamma^1;\theta\right)\right]
\ldots \left[\left(p_1\left(\gamma^k\right)\right)^{m_k}P_{\mu^{(k)}}\left(\gamma^k;\theta\right)\right],
\end{split}
\nonumber
\end{equation}
where we have used (\ref{BIGJACK}).
The Pieri formula for the Jack symmetric polynomials reads
\begin{equation}
p_1(x)P_{\mu}(x;\theta)=\sum\limits_{\lambda:\;\lambda\searrow\mu}\chi_{\theta}\left(\mu,\lambda\right)P_{\lambda}(x;\theta),
\end{equation}
see, for example, Kerov, Okounkov, and Olshanski \cite{KerovOkounkovOlshanski},  Section 6. Application of this formula
gives
\begin{equation}
\left(p_1\left(\gamma^l\right)\right)^{m_l}P_{\mu^{(l)}}\left(\gamma^l;\theta\right)
=\sum\limits_{\lambda^{(l)}:\;|\lambda^{(l)}|=|\mu^{(l)}|+m_l}\dim_{\theta}\left(\mu^{(l)},\lambda^{(l)}\right)
P_{\lambda^{(l)}}\left(\gamma^l;\theta\right),
\nonumber
\end{equation}
where $l=1,\ldots,k$. As a result  we obtain
\begin{equation}\label{3.5}
\begin{split}
&\left(p_1\left(c_1\right)\right)^{n-m}\PB_{\widetilde{\Lambda}_m^{(k)}}\left(\gamma^1,\ldots,\gamma^k;\theta\right)=
\underset{m_1+\ldots+m_k=n-m}{\sum\limits_{m_1,\ldots,m_k}}
\frac{(n-m)!}{m_1!\ldots m_k!}d_1^{m_1}\ldots d_k^{m_k}\\
&\times
\sum\limits_{\lambda^{(1)}:\;|\lambda^{(1)}|=|\mu^{(1)}|+m_1}\ldots\sum\limits_{\lambda^{(k)}:\;|\lambda^{(k)}|=|\mu^{(k)}|+m_k}
\left(\prod_{l=1}^{k}
\dim_{\theta}\left(\mu^{(l)},\lambda^{(l)}\right)\right)
\PB_{\Lambda_n^{(k)}}\left(\gamma^1,\ldots,\gamma^k;\theta\right).
\end{split}
\end{equation}
Comparing the right-hand sides of equations (\ref{3.4}) and (\ref{3.5}) we arrive to formula (\ref{3.3}).
\end{proof}
Proposition \ref{PROPOSITIONFORMULADIMENSIONFUNCTION} enables us to express the Martin kernel $K_{\theta}\left(\tLambda_m^{(k)},\Lambda_n^{(k)}\right)$ in terms of the $\theta$-shifted
counterparts $P_{\lambda}^{\ast}(x;\theta)$ of the Jack symmetric polynomials.
The functions $P_{\lambda}^{\ast}(x;\theta)$ (called the $\theta$-shifted Jack symmetric polynomials)
 are elements of the algebra $\Sym_{\theta}^{\ast}$ generated by the shifted analogues $p_r^{\ast}(x;\theta)$ of the power symmetric functions $p_r$,
$$
p_r^{\ast}(x;\theta)=\sum\limits_{i=1}^{\infty}\left(x_i-\theta i\right)^r-\left(-\theta i\right)^r.
$$
The $\theta$-shifted  Jack symmetric polynomial $P_{\lambda}^{\ast}(x;\theta)$ can be defined as
the unique element of $\Sym_{\theta}^{\ast}$ such that
$\deg\left(P^{\ast}_{\lambda}\right)=|\lambda|$, and
\begin{equation}
P_{\lambda}^{\ast}\left(\nu;\theta\right)=\left\{
\begin{array}{ll}
H_{\theta}(\lambda), & \nu=\lambda,\\
0, & \nu\nsubseteq\lambda.
\end{array}
\right.
\end{equation}
Here $H_{\theta}(\lambda)$ is defined by
$$
H_{\theta}(\lambda)=\prod\limits_{(i,j)\in\lambda}\left(\lambda_i-j+\theta\left(\lambda_j'-i\right)+1\right),
$$
where $(i,j)\in\lambda$ stands for the box in the $i$th row and the $j$th column of the Young diagram $\lambda$, and $\lambda'$ denotes the transposed diagram.

For different properties of the $\theta$-shifted Jack symmetric polynomials we refer the reader to Okounkov \cite{OkounkovShiftedMacdonald}, Okounkov and Olshanski \cite{OkounkovOlshanskiShiftedJack}, Kerov, Okounkov, and Olshanski \cite{KerovOkounkovOlshanski}. Here we prove the following Proposition.

\begin{prop} Assume that $m\leq n$. The Martin kernel $K_{\theta}\left(\widetilde{\Lambda}_m^{(k)},\Lambda_n^{(k)}\right)$
of $\Gamma_{\theta}(G)$
can be expressed in terms of the $\theta$-shifted Jack symmetric polynomials. Namely, we have
\begin{equation}\label{3.8}
K_{\theta}\left(\widetilde{\Lambda}_m^{(k)},\Lambda_n^{(k)}\right)=
\frac{1}{n(n-1)\ldots(n-m+1)}\prod\limits_{l=1}^k\frac{1}{d_l^{|\mu^{(l)}|}}\;
P_{\mu^{(l)}}^{\ast}\left(\lambda^{(l)};\theta\right),
\end{equation}
where  $\Lambda_n^{(k)}=\left(\lambda^{(1)},\ldots,\lambda^{(k)}\right)\in\Y_n^{(k)}$,  $\widetilde{\Lambda}_m^{(k)}=\left(\mu^{(1)},\ldots,\mu^{(k)}\right)\in\Y_m^{(k)}$,
and $d_1$, $\ldots$, $d_k$ are dimensions of the irreducible representations of $G$.
Here $P_{\mu}^{\ast}\left(\lambda;\theta\right)$ denotes the $\theta$-shifted Jack symmetric polynomial parameterized by the Young diagram $\mu$, and
evaluated on $\lambda=\left(\lambda_1,\lambda_2,\ldots\right)$.
\end{prop}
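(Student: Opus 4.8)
The plan is to obtain (\ref{3.8}) by dividing two instances of the dimension formula of Proposition~\ref{PROPOSITIONFORMULADIMENSIONFUNCTION} and then recognizing the surviving one-component factors as $\theta$-shifted Jack polynomials. First I would apply (\ref{3.3}) to the numerator of the Martin kernel (\ref{MartinKernelDefinition}) directly, and apply it a second time to the denominator by specializing $m=0$, $\tLambda_0^{(k)}=(\emptyset,\ldots,\emptyset)$, which yields
\begin{equation*}
\DIM_{\theta}\left(\Lambda_n^{(k)}\right)=n!\prod_{l=1}^k\frac{d_l^{|\lambda^{(l)}|}}{|\lambda^{(l)}|!}\,\dim_{\theta}\left(\lambda^{(l)}\right),
\end{equation*}
with the convention $\dim_{\theta}(\lambda^{(l)})=\dim_{\theta}(\emptyset,\lambda^{(l)})$. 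Forming the quotient, the powers of $d_l$ collapse to $d_l^{-|\mu^{(l)}|}$, the numerical prefactor becomes $(n-m)!/n!=1/\bigl(n(n-1)\cdots(n-m+1)\bigr)$, and one is left with
\begin{equation*}
K_{\theta}\left(\tLambda_m^{(k)},\Lambda_n^{(k)}\right)=\frac{1}{n(n-1)\cdots(n-m+1)}\prod_{l=1}^k\frac{1}{d_l^{|\mu^{(l)}|}}\cdot\frac{|\lambda^{(l)}|!}{\bigl(|\lambda^{(l)}|-|\mu^{(l)}|\bigr)!}\cdot\frac{\dim_{\theta}\bigl(\mu^{(l)},\lambda^{(l)}\bigr)}{\dim_{\theta}\bigl(\lambda^{(l)}\bigr)}.
\end{equation*}

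Comparing with (\ref{3.8}), it then suffices to establish the purely one-diagram identity
\begin{equation*}
P_{\mu}^{\ast}(\lambda;\theta)=\frac{|\lambda|!}{\bigl(|\lambda|-|\mu|\bigr)!}\cdot\frac{\dim_{\theta}(\mu,\lambda)}{\dim_{\theta}(\lambda)},\qquad \mu\subseteq\lambda,
\end{equation*}
that is, that the Martin kernel of the Young graph with the Jack edge multiplicities $\chi_{\theta}$ of (\ref{TheJackMultiplicityFunction}) coincides, up to the explicit falling-factorial normalization, with the $\theta$-shifted Jack polynomial $P_{\mu}^{\ast}$ evaluated at $\lambda$; for $\mu\not\subseteq\lambda$ both sides vanish (extra vanishing of $P_{\mu}^{\ast}$ on the left, absence of an oriented path on the right), consistently with $K_{\theta}=0$ in that case. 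This is the Jack analogue of the classical shifted-Schur identity $s_{\mu}^{\ast}(\lambda)=\frac{|\lambda|!}{(|\lambda|-|\mu|)!}\dim(\lambda/\mu)/\dim\lambda$, and it is exactly the Martin-kernel/binomial-formula description of the boundary of the Jack--Young graph due to Kerov, Okounkov, and Olshanski \cite{KerovOkounkovOlshanski}; I would cite it, or, for a self-contained argument, check that the right-hand side, regarded as a function of $\lambda$, lies in $\Sym_{\theta}^{\ast}$ and has degree $|\mu|$, vanishes at every partition $\nu\neq\mu$ with $|\nu|\le|\mu|$ (immediate, since then $\mu\not\subseteq\nu$ and hence $\dim_{\theta}(\mu,\nu)=0$), and equals $H_{\theta}(\mu)$ at $\nu=\mu$; the last point is equivalent to the evaluation $\dim_{\theta}(\mu)=|\mu|!/H_{\theta}(\mu)$, which follows by induction on $|\mu|$ from the branching recursion $\dim_{\theta}(\mu)=\sum_{\nu\nearrow\mu}\chi_{\theta}(\nu,\mu)\dim_{\theta}(\nu)$ together with the hook-type telescoping of the product defining $H_{\theta}$. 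Uniqueness of the $\theta$-shifted Jack polynomial with these three properties then finishes the identity.

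I expect this one-diagram reduction to be the only substantive step; the passage from it to (\ref{3.8}) is the mechanical division above, and the multi-component structure contributes nothing beyond the bookkeeping of the $d_l$'s and the factorials. Within the reduction, the genuinely nontrivial inputs are the polynomiality statement --- that $\lambda\mapsto\frac{|\lambda|!}{(|\lambda|-|\mu|)!}\dim_{\theta}(\mu,\lambda)/\dim_{\theta}(\lambda)$ is an element of $\Sym_{\theta}^{\ast}$ of degree $|\mu|$ --- and the hook-product evaluation of $\dim_{\theta}(\mu)$ for general $\theta\ge 0$; both are available in \cite{KerovOkounkovOlshanski}, so the proof of the Proposition is essentially a transcription of that one-variable result through Proposition~\ref{PROPOSITIONFORMULADIMENSIONFUNCTION}.
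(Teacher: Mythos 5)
Your proposal is correct and follows essentially the same route as the paper: the paper likewise divides the two instances of the dimension formula from Proposition \ref{PROPOSITIONFORMULADIMENSIONFUNCTION} to get exactly your intermediate expression (its equation (\ref{3.6})), and then invokes the one-diagram identity $\dim_{\theta}(\mu,\lambda)/\dim_{\theta}(\lambda)=P_{\mu}^{\ast}(\lambda;\theta)/\bigl(|\lambda|(|\lambda|-1)\cdots(|\lambda|-|\mu|+1)\bigr)$, cited from Okounkov and Olshanski \cite{OkounkovOlshanskiShiftedJack}, equation (5.2), which is precisely your reduction. The only difference is that you additionally sketch a self-contained verification of that identity via the extra-vanishing characterization of $P_{\mu}^{\ast}$, where the paper simply cites it.
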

\begin{proof}
Proposition \ref{PROPOSITIONFORMULADIMENSIONFUNCTION} results in the following formula for the Martin kernel $K_{\theta}\left(\widetilde{\Lambda}_m^{(k)},\Lambda_n^{(k)}\right)$:
\begin{equation}\label{3.6}
K_{\theta}\left(\widetilde{\Lambda}_m^{(k)},\Lambda_n^{(k)}\right)=
\frac{1}{n(n-1)\ldots(n-m+1)}
\prod\limits_{l=1}^k\frac{1}{d_l^{|\mu^{(l)}|}}\frac{|\lambda^{(l)}|!}{\left(
|\lambda^{(l)}|-|\mu^{(l)}|\right)!}
\frac{\dim_{\theta}\left(\mu^{(l)},\lambda^{(l)}\right)}{\dim_{\theta}\left(\lambda^{(l)}\right)}.
\end{equation}
Let $\mu$ and $\lambda$ be arbitrary Young diagrams such that $|\mu|\leq|\lambda|$. It is known that
\begin{equation}\label{3.7}
\frac{\dim_{\theta}\left(\mu,\lambda\right)}{\dim_{\theta}\left(\lambda\right)}=
\frac{P_{\mu}^{\ast}\left(\lambda;\theta\right)}{|\lambda|\left(|\lambda|-1\right)\ldots\left(|\lambda|-|\mu|+1\right)},
\end{equation}
see  Okounkov and Olshanski \cite{OkounkovOlshanskiShiftedJack},  equation (5.2).
Equations (\ref{3.6}) and
(\ref{3.7}) imply (\ref{3.8}).
\end{proof}
\begin{rem}
In the case $\theta=1$, $k=1$
the branching graph is the Young graph $\Y$, the $\theta$-shifted Jack symmetric polynomials
turn into the shifted Schur functions described in Okounkov and Olshanski \cite{OkounkovOlshanskiShiftedSchur}, and
the Martin kernel $K_{\theta}\left(\tLambda_m^{(k)},\Lambda_n^{(k)}\right)$
turns into
$$
K(\mu,\lambda)=\frac{1}{n(n-1)\ldots (n-m+1)}s^{\ast}_{\mu}(\lambda).
$$
Here $s^{\ast}_{\mu}(\lambda)$ is the shifted Schur function parameterized by the Young diagram $\mu$ and evaluated at $\lambda=\left(\lambda_1,\lambda_2,\ldots\right)$.
\end{rem}

\subsection{Asymptotics of the Martin kernel}
Recall that the Martin kernel $K_{\theta}\left(\tilde{\Lambda}_m^{(k)},\Lambda_n^{(k)}\right)$ associated with the branching graph $\Gamma_{\theta}(G)$ is defined by equation (\ref{MartinKernelDefinition}).
\begin{prop}\label{THEOREMASYMPTOTICSMARTINKERNEL}Let $\Omega(G)$ be the generalized Thoma set defined by equation (\ref{THEGENERALIZEDTHOMASET}). For each $l$, $1\leq l\leq k$, let    $p_{r,l}^{o}\left(\alpha,\beta,\delta;\theta\right)$ be  the $\theta$-extended power sum symmetric function
evaluated on $\Omega(G)$, and defined by equation (\ref{pnol}).
Denote by $P_{\mu^{(l)}}^{o}\left(\alpha,\beta;\delta;\theta\right)$
the $\theta$-extended Jack symmetric polynomial parameterized by $\lambda^{(l)}$ (which is defined in Section \ref{SectionRepresentationHarmonic Functions}
 as a polynomial in variables $\left\{p_{r,l}^{o}(\alpha,\beta,\delta;\theta):\;\; r\geq 1\right\}$).
We have
\begin{equation}\label{5.2}
K_{\theta}\left(\tLambda_m^{(k)},\Lambda_n^{(k)}\right)=\prod\limits_{l=1}^k\frac{1}{d_l^{|\mu^{(l)}|}}P_{\mu^{(l)}}^{o}\left(\alpha\left(\Lambda_n^{(k)},n\right),\beta\left(\Lambda_n^{(k)},n\right),\delta\left(\Lambda_n^{(k)},n\right);\theta\right)+O\left(\frac{1}{\sqrt{n}}\right),
\end{equation}
as $n\rightarrow\infty$, and $m$ remains fixed.
Here $\tLambda_m^{(k)}=\left(\mu^{(1)},\ldots,\mu^{(k)}\right)$, $\Lambda_n^{(k)}=\left(\lambda^{(1)},\ldots,\lambda^{(k)}\right)$,
 the variables $\alpha\left(\Lambda_n^{(k)},n\right)$,
$\beta\left(\Lambda_n^{(k)},n\right)$, $\delta\left(\Lambda_n^{(k)},n\right)$ are defined by equations (\ref{5.5})-(\ref{5.8}), and the bound of the reminder is uniform in $\Lambda_n^{(k)}$.
\end{prop}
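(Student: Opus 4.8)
The plan is to read off the result from the explicit formula (\ref{3.8}) for the Martin kernel together with the asymptotic estimate of Theorem \ref{Theorem4.1}. The first step is to recognize the product on the right of (\ref{3.8}) as the value of a single element of $\Sym_\theta^\ast(G)$. Set
\[
f_\theta^\ast \;=\; \prod_{l=1}^k \frac{1}{d_l^{|\mu^{(l)}|}}\, P_{\mu^{(l)}}^\ast(c_l;\theta),
\]
where $P_{\mu^{(l)}}^\ast(c_l;\theta)$ is the $\theta$-shifted Jack polynomial $P_{\mu^{(l)}}^\ast(\,\cdot\,;\theta)$ with each $p_r^\ast(\,\cdot\,;\theta)$ replaced by $p_r^\ast(c_l;\theta)$; this is an element of $\Sym_\theta^\ast(G)$ of degree $\sum_l |\mu^{(l)}| = m$. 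Under the realization of $\Sym_\theta^\ast(G)$ as functions on $\Y_n^{(k)}$ from Section \ref{Section5.1}, which assigns to $\Lambda_n^{(k)} = (\lambda^{(1)},\ldots,\lambda^{(k)})$ the variables $x_{c_l} = (\lambda^{(l)}_1,\lambda^{(l)}_2,\ldots)$, one has $f_\theta^\ast(\Lambda_n^{(k)}) = \prod_l d_l^{-|\mu^{(l)}|} P_{\mu^{(l)}}^\ast(\lambda^{(l)};\theta)$, so that (\ref{3.8}) reads $K_\theta(\tLambda_m^{(k)},\Lambda_n^{(k)}) = \frac{1}{n(n-1)\cdots(n-m+1)}\, f_\theta^\ast(\Lambda_n^{(k)})$.

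The second step is to identify $f := [f_\theta^\ast] \in \Sym(G)$ and its image $f^o \in C(\Omega(G))$. Since the top-degree homogeneous component of the shifted Jack polynomial $P_\mu^\ast(\,\cdot\,;\theta)$, expressed through the $p_r^\ast$, is the ordinary Jack polynomial $P_\mu(\,\cdot\,;\theta)$ expressed through the $p_r$ (see Okounkov and Olshanski \cite{OkounkovOlshanskiShiftedJack}), and since $\Sym_\theta^\ast(G)$ is, factorwise, a tensor product of single-family shifted-symmetric-function algebras, we get $[f_\theta^\ast] = \prod_l d_l^{-|\mu^{(l)}|} P_{\mu^{(l)}}(c_l;\theta)$. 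Applying the algebra homomorphism $f \mapsto f^o$ of equations (\ref{4.2new})--(\ref{4.3new}), which replaces each $p_r(c_l)$ by $p_{r,l}^o(\alpha,\beta,\delta;\theta)$, and using that the $\theta$-extended Jack polynomial $P_{\mu^{(l)}}^o$ is by definition obtained from $P_{\mu^{(l)}}$ by the same substitution, we obtain $f^o = \prod_l d_l^{-|\mu^{(l)}|} P_{\mu^{(l)}}^o(\alpha,\beta,\delta;\theta)$, which is precisely the product on the right-hand side of (\ref{5.2}).

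The third step is to apply Theorem \ref{Theorem4.1} to this $f_\theta^\ast$ (with $\deg f_\theta^\ast = m$), which gives
\[
\frac{1}{n^m} f_\theta^\ast(\Lambda_n^{(k)}) = f^o\bigl(\alpha(\Lambda_n^{(k)},n),\beta(\Lambda_n^{(k)},n),\delta(\Lambda_n^{(k)},n);\theta\bigr) + O(n^{-1/2}),
\]
with the remainder uniform in $\Lambda_n^{(k)}$. Combining this with the rewriting of (\ref{3.8}) above and using $\frac{n^m}{n(n-1)\cdots(n-m+1)} = 1 + O(1/n)$ for fixed $m$, together with the fact that $f^o$ is bounded on the compact set $\Omega(G)$ while all the points $\frac{1}{n}W_{\Lambda_n^{(k)}} = \bigl(\alpha(\Lambda_n^{(k)},n),\beta(\Lambda_n^{(k)},n),\delta(\Lambda_n^{(k)},n)\bigr)$ lie in $\Omega(G)$, the two sources of error both collapse into a single $O(n^{-1/2})$ term uniform in $\Lambda_n^{(k)}$. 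This is exactly (\ref{5.2}).

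The main obstacle is not in this argument but is already packaged in Theorem \ref{Theorem4.1}, whose proof rests on the $k=1$ estimate (Theorem 7.1 of Kerov, Okounkov, and Olshanski \cite{KerovOkounkovOlshanski}) and the passage from power sums to arbitrary monomials. Given Theorem \ref{Theorem4.1}, the present statement is essentially a dictionary translation; the only point demanding a moment's care is the top-component identification in the multi-family algebra $\Sym_\theta^\ast(G)$, which is immediate once one notes that this algebra factors as a tensor product over the conjugacy classes of $G$, so that the classical one-variable-family statement applies to each factor.
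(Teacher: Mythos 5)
Your proposal is correct and follows essentially the same route as the paper's own proof: rewrite the Martin kernel via formula (\ref{3.8}) as an element of $\Sym_{\theta}^{\ast}(G)$ of degree $m$ evaluated at the row lengths, identify its top homogeneous component (factorwise, shifted Jack $\to$ ordinary Jack $\to$ $\theta$-extended Jack under $\varphi_{\alpha,\beta,\delta}$), and apply Theorem \ref{Theorem4.1} together with $n^m/(n(n-1)\cdots(n-m+1))=1+O(1/n)$. Your added remarks on uniformity and the boundedness of $f^{o}$ on the compact $\Omega(G)$ only make explicit what the paper leaves as ``follows immediately.''
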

\begin{proof}
The Martin kernel $K_{\theta}\left(\tLambda_m^{(k)},\Lambda_n^{(k)}\right)$ can be written as in equation (\ref{3.8}), and it is an element of $\Sym_{\theta}^{\ast}\left(G\right)$ evaluated at variables
$$
x_{c_1}=\left(\lambda^{(1)}_1,\ldots,\lambda_{l(\lambda^{(1)})}^{(1)},0,0,\ldots\right)
,\ldots,
x_{c_k}=\left(\lambda^{(k)}_1,\ldots,\lambda_{l(\lambda^{(k)})}^{(k)},0,0,\ldots\right).
$$
It is known (see Kerov, Okounkov, and Olshanski \cite{KerovOkounkovOlshanski}, Section 7) that the highest degree homogeneous component of $P_{\mu^{(l)}}^{\ast}\left(c_l;\theta\right)$
is equal to the ordinary Jack polynomial $P_{\mu^{(l)}}\left(c_l;\theta\right)$. In addition,
$$
\varphi_{\alpha,\beta,\delta}\left(P_{\mu^{(l)}}\left(c_l;\theta\right)\right)
=P_{\mu^{(l)}}^{o}\left(\alpha\left(\Lambda_n^{(k)},n\right),\beta\left(\Lambda_n^{(k)},n\right),\delta\left(\Lambda_n^{(k)},n\right);\theta\right),
$$
where the right-hand side of equation above is obtained by expressing the Jack symmetric function $P_{\mu^{(l)}}\left(c_l;\theta\right)$ as a polynomial in the variables $\left\{p_r\left(c_l\right):\;r\geq 1\right\}$, and by the subsequent replacing of $p_r\left(c_l\right)$ by $p_{r,l}^{o}\left(\alpha\left(\Lambda_n^{(k)},n\right),\beta\left(\Lambda_n^{(k)},n\right),\delta\left(\Lambda_n^{(k)},n\right);\theta\right)$, see
equation (\ref{5.12}).
Theorem \ref{Theorem4.1} gives
\begin{equation}
\begin{split}
&K_{\theta}\left(\tLambda_m^{(k)},\Lambda_n^{(k)}\right)=\frac{n^m}{n(n-1)\ldots (n-m+1)}\\
&\times\biggl[\prod\limits_{l=1}^k\frac{1}{d_l^{|\mu^{(l)}|}}
P_{\mu^{(l)}}^{o}\left(\alpha\left(\Lambda_n^{(k)},n\right),\beta\left(\Lambda_n^{(k)},n\right),\delta\left(\Lambda_n^{(k)},n\right);\theta\right)+O\left(\frac{1}{\sqrt{n}}\right)\biggr],
\end{split}
\end{equation}
as $n\rightarrow\infty$, and equation (\ref{5.2}) follows immediately.
\end{proof}
\subsection{The coherent system of  probability measures associated with a harmonic function on $\Gamma_{\theta}\left(G\right)$}\label{Section7.3}
Let $\left(\mathcal{M}_n^{(k)}\right)_{n=1}^{\infty}$ be a  system of  measures associated with a harmonic function  $\varphi$ on $\Gamma_{\theta}\left(G\right)$, see Section \ref{SubSubSectionHarmonicFunctions}. We show that  each $\mathcal{M}_n^{(k)}$
is a probability measure on $\Y_n^{(k)}$. Indeed, equations  (\ref{EqDefHarFun}) and (\ref{CoherentSystemsHarmonicFunctions}) imply
\begin{equation}\label{Coh1}
\mathcal{M}_{n-1}^{(k)}\left(\widetilde{\Lambda}_{n-1}^{(k)}\right)=
\sum\limits_{\Lambda_n^{(k)}\in\Y_n^{(k)}}\frac{\Upsilon_{\theta}\left(\widetilde{\Lambda}_{n-1}^{(k)},\Lambda_n^{(k)}\right)
\DIM_{\theta}\left(\widetilde{\Lambda}_{n-1}^{(k)}\right)}{\DIM_{\theta}\left(\Lambda_n^{(k)}\right)}\mathcal{M}_n^{(k)}\left(\Lambda_n^{(k)}\right)
\end{equation}
for each $\widetilde{\Lambda}_{n-1}^{(k)}$, $\widetilde{\Lambda}_{n-1}^{(k)}\in\Y_{n-1}^{(k)}$. In addition,
equation (\ref{DimensionFunction}) gives
\begin{equation}\label{Coh2}
1=\sum\limits_{\widetilde{\Lambda}_{n-1}^{(k)}:\;\widetilde{\Lambda}_{n-1}^{(k)}\nearrow\Lambda_n^{(k)}}\frac{\DIM_{\theta}\left(\widetilde{\Lambda}_{n-1}^{(k)}\right)}{\DIM_{\theta}(\Lambda_n^{(k)})}\Upsilon\left(\widetilde{\Lambda}_{n-1}^{(k)},\Lambda_n^{(k)}\right).
\end{equation}
It follows form equations (\ref{Coh1}), (\ref{Coh2}) that
$$
\sum\limits_{\widetilde{\Lambda}_{n-1}^{(k)}\in\Y_{n-1}^{(k)}}\M_{n-1}^{(k)}\left(\widetilde{\Lambda}_{n-1}^{(k)}\right)=\sum\limits_{\Lambda_n^{(k)}\in\Y_{n}^{(k)}}\M_{n}^{(k)}\left(\Lambda_n^{(k)}\right).
$$
We conclude that $\sum\limits_{\Lambda_n^{(k)}\in\Y_{n}^{(k)}}\M_{n}^{(k)}\left(\Lambda_n^{(k)}\right)=1$ for each $n=1,2,\ldots$.

The following property of the coherent systems of probability measures on $\Gamma_{\theta}(G)$ will be important in the proof of Theorem \ref{MAINTHEOREM}.
\begin{prop}
Set
\begin{equation}\label{DefinitionOfPropagator}
W_{m}^n\left(\Lambda_n^{(k)},\tLambda_m^{(k)};\theta\right)
=\frac{\DIM_{\theta}\left(\tLambda_m^{(k)},\Lambda_n^{(k)}\right)\DIM_{\theta}\left(\tLambda_m^{(k)}\right)}{\DIM_{\theta}(\Lambda_n^{(k)})},
\end{equation}
where $n\geq m$, $\Lambda_n^{(k)}\in\Y_n^{(k)}$, $\tLambda_m^{(k)}\in\Y_m^{(k)}$, and let $\left(\mathcal{M}_n\right)_{n=1}^{\infty}$
be a coherent system of probability measures on the branching graph $\Gamma_{\theta}(G)$. We have
\begin{equation}\label{PropertyK1}
\sum\limits_{\Lambda_n^{(k)}\in\Y_n^{(k)}}W_{m}^n\left(\Lambda_n^{(k)},\tLambda_m^{(k)};\theta\right)\mathcal{M}_n^{(k)}\left(\Lambda_n^{(k)}\right)=\mathcal{M}_m^{(k)}\left(\tLambda_m^{(k)}\right)
\end{equation}
for each $\tLambda_m^{(k)}\in\Y_m^{(k)}$. Also,
\begin{equation}\label{PropertyK2}
\sum\limits_{\tLambda_{m}^{(k)}\in\Y_{m}^{(k)}}W_{m}^{n}\left(\Lambda_{n}^{(k)},\tLambda_{m}^{(k)};\theta\right)
W_{p}^{m}\left(\tLambda_{m}^{(k)},\Theta_{p}^{(k)};\theta\right)
=W_{p}^{n}\left(\Lambda_{n}^{(k)},\Theta_{p}^{(k)};\theta\right)
\end{equation}
for each $n\geq m\geq p$,
each $\Lambda_{n}^{(k)}\in\Y_{n}^{(k)}$, and each $\Theta_{p}^{(k)}\in\Y_{p}^{(k)}$.
\end{prop}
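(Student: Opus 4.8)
The plan is to deduce both identities from two elementary combinatorial facts about the branching graph $\Gamma_\theta(G)$, each proved by induction on $n$; note at the outset that every sum appearing below ranges over one of the \emph{finite} sets $\Y_j^{(k)}$, so there are no convergence or interchange-of-summation issues. \textbf{Step 1 (composition identity for the dimension function).} I would first establish that, for $p\le m\le n$,
\[
\sum_{\tLambda_m^{(k)}\in\Y_m^{(k)}}\DIM_\theta\!\left(\Theta_p^{(k)},\tLambda_m^{(k)}\right)\DIM_\theta\!\left(\tLambda_m^{(k)},\Lambda_n^{(k)}\right)=\DIM_\theta\!\left(\Theta_p^{(k)},\Lambda_n^{(k)}\right).
\]
This is proved by induction on $n$. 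For $n=m$ it is immediate from $\DIM_\theta(\tLambda_m^{(k)},\tLambda_m^{(k)})=1$ and the vanishing of $\DIM_\theta$ off oriented paths. For the step, expand the right-hand side by the recursion \eqref{DimensionFunction} as a sum over $\Xi_{n-1}^{(k)}\nearrow\Lambda_n^{(k)}$, apply the induction hypothesis to rewrite $\DIM_\theta(\Theta_p^{(k)},\Xi_{n-1}^{(k)})$, interchange the two finite sums, and recognize the inner sum $\sum_{\Xi_{n-1}^{(k)}\nearrow\Lambda_n^{(k)}}\DIM_\theta(\tLambda_m^{(k)},\Xi_{n-1}^{(k)})\Upsilon_\theta(\Xi_{n-1}^{(k)},\Lambda_n^{(k)})$ as $\DIM_\theta(\tLambda_m^{(k)},\Lambda_n^{(k)})$, again by \eqref{DimensionFunction}.

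\textbf{Step 2 (harmonic averaging).} Next I would show that if $\varphi$ is harmonic on $\Gamma_\theta(G)$ then, for all $m\le n$,
\[
\sum_{\Lambda_n^{(k)}\in\Y_n^{(k)}}\DIM_\theta\!\left(\tLambda_m^{(k)},\Lambda_n^{(k)}\right)\varphi\!\left(\Lambda_n^{(k)}\right)=\varphi\!\left(\tLambda_m^{(k)}\right),
\]
again by induction on $n$. The case $n=m+1$ is precisely Definition \ref{DEFINITIONHARMONICFUNCTIONS}, since $\DIM_\theta(\tLambda_m^{(k)},\Lambda_{m+1}^{(k)})=\Upsilon_\theta(\tLambda_m^{(k)},\Lambda_{m+1}^{(k)})$; for the step, expand $\DIM_\theta(\tLambda_m^{(k)},\Lambda_n^{(k)})$ via \eqref{DimensionFunction}, swap the finite sums, apply harmonicity \eqref{EqDefHarFun} at level $n-1$, and close with the induction hypothesis.

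\textbf{Step 3 (conclusion).} Both claims now follow by substituting \eqref{DefinitionOfPropagator} and cancelling. For \eqref{PropertyK2}, the factor $\DIM_\theta(\tLambda_m^{(k)})$ in the numerator of $W_m^n$ cancels the one in the denominator of $W_p^m$, leaving $\dfrac{\DIM_\theta(\Theta_p^{(k)})}{\DIM_\theta(\Lambda_n^{(k)})}\sum_{\tLambda_m^{(k)}}\DIM_\theta(\Theta_p^{(k)},\tLambda_m^{(k)})\DIM_\theta(\tLambda_m^{(k)},\Lambda_n^{(k)})$, which by Step 1 equals $\dfrac{\DIM_\theta(\Theta_p^{(k)})\,\DIM_\theta(\Theta_p^{(k)},\Lambda_n^{(k)})}{\DIM_\theta(\Lambda_n^{(k)})}=W_p^n(\Lambda_n^{(k)},\Theta_p^{(k)};\theta)$. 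For \eqref{PropertyK1}, use $\mathcal{M}_n^{(k)}(\Lambda_n^{(k)})=\DIM_\theta(\Lambda_n^{(k)})\varphi(\Lambda_n^{(k)})$ from \eqref{CoherentSystemsHarmonicFunctions}; the factor $\DIM_\theta(\Lambda_n^{(k)})$ cancels the denominator of $W_m^n$, leaving $\DIM_\theta(\tLambda_m^{(k)})\sum_{\Lambda_n^{(k)}}\DIM_\theta(\tLambda_m^{(k)},\Lambda_n^{(k)})\varphi(\Lambda_n^{(k)})$, which by Step 2 and \eqref{CoherentSystemsHarmonicFunctions} equals $\DIM_\theta(\tLambda_m^{(k)})\varphi(\tLambda_m^{(k)})=\mathcal{M}_m^{(k)}(\tLambda_m^{(k)})$.

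The only genuine work is the bookkeeping of the two inductions — in particular keeping the intermediate-vertex sums over exactly the $\Xi_{n-1}^{(k)}$ with $\Xi_{n-1}^{(k)}\nearrow\Lambda_n^{(k)}$, which is automatic since $\Upsilon_\theta$ vanishes otherwise — and there is no analytic obstacle, every sum being finite. (Recall that Section \ref{Section7.3} has already shown each $\mathcal{M}_n^{(k)}$ is a probability measure, so \eqref{PropertyK1}–\eqref{PropertyK2} are genuinely statements about the normalized propagators $W_m^n$ rather than about normalization.)
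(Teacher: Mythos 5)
Your proof is correct, and it rests on exactly the same ingredients as the paper's argument: the recursion (\ref{DimensionFunction}) defining $\DIM_{\theta}$, harmonicity (\ref{EqDefHarFun}), the identification $\mathcal{M}_n^{(k)}\left(\Lambda_n^{(k)}\right)=\DIM_{\theta}\left(\Lambda_n^{(k)}\right)\varphi\left(\Lambda_n^{(k)}\right)$ from (\ref{CoherentSystemsHarmonicFunctions}), and interchanges of finite sums. The only organizational difference is that the paper verifies the one-step normalized relations (\ref{EquationKM=M}) and (\ref{EquationKK=K}) for the adjacent-level propagator $W_{n-1}^{n}$ and leaves the iteration to general $m\leq n$ implicit, whereas you induct directly on the unnormalized statements (transitivity of $\DIM_{\theta}$ through an intermediate level, and harmonic averaging) and only divide by $\DIM_{\theta}\left(\Lambda_n^{(k)}\right)$ at the end; this is essentially the same path-counting argument, with your write-up supplying explicitly the iteration step the paper leaves to the reader.
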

\begin{proof} If $m=n-1$, then
\begin{equation}\label{DimASUpsilon}
\DIM_{\theta}\left(\tLambda_{n-1}^{(k)},\Lambda_n^{(k)}\right)=\Upsilon_{\theta}\left(\tLambda_{n-1}^{(k)},\Lambda_n^{(k)}\right),
\end{equation}
as it follows from equation (\ref{DimensionFunction}). We obtain
\begin{equation}
W_{n-1}^n\left(\Lambda_n^{(k)},\tLambda_{n-1}^{(k)};\theta\right)=\frac{\Upsilon_{\theta}\left(\tLambda_{n-1}^{(k)},\Lambda_n^{(k)}\right)\DIM_{\theta}\left(\Lambda_{n-1}^{(k)}\right)}{\DIM_{\theta}\left(\Lambda_n^{(k)}\right)},
\end{equation}
and equation (\ref{Coh1}) says that
\begin{equation}\label{EquationKM=M}
\sum\limits_{\Lambda_n^{(k)}\in\Y_{n}^{(k)}}W_{n-1}^n\left(\Lambda_n^{(k)},\tLambda_{n-1}^{(k)};\theta\right)
\mathcal{M}_n\left(\Lambda_n^{(k)}\right)=\mathcal{M}_{n-1}\left(\tLambda_{n-1}^{(k)}\right).
\end{equation}
In addition, we have
\begin{equation}
\begin{split}
&\sum\limits_{\Lambda_n^{(k)}\in\Y_{n}^{(k)}}W_{n}^{n+1}\left(\Theta_{n+1}^{(k)},\Lambda_n^{(k)};\theta\right)
W_{n-1}^{n}\left(\Lambda_n^{(k)},\tLambda_{n-1}^{(k)};\theta\right)\\
&=\sum\limits_{\Lambda_n^{(k)}\in\Y_{n}^{(k)}}\left[\frac{\Upsilon_{\theta}\left(\Lambda_n^{(k)},\Theta_{n+1}^{(k)}\right)
\DIM_{\theta}\left(\Lambda_n^{(k)}\right)}{\DIM_{\theta}(\Theta_{n+1}^{(k)})}\right]
\left[\frac{\Upsilon_{\theta}\left(\tLambda_{n-1}^{(k)},\Lambda_n^{(k)}\right)
\DIM_{\theta}\left(\tLambda_{n-1}^{(k)}\right)}{\DIM_{\theta}(\Lambda_n^{(k)})}\right]\\
&=\frac{1}{\DIM_{\theta}(\Theta_{n+1}^{(k)})}
\left[\sum\limits_{\Lambda_n^{(k)}\in\Y_n^{(k)}}\Upsilon_{\theta}\left(\tLambda_{n-1}^{(k)},\Lambda_n^{(k)}\right)
\Upsilon_{\theta}\left(\Lambda_n^{(k)},\Theta_{n+1}^{(k)}\right)\right]\DIM_{\theta}\left(\tLambda_{n-1}^{(k)}\right).
\end{split}
\end{equation}
Taking into account (\ref{DimASUpsilon}) we see that  the
 expression in the brackets above is equal to $\DIM_{\theta}(\tLambda_{n-1}^{(k)},\Theta_{n+1}^{(k)})$, and we obtain
\begin{equation}\label{EquationKK=K}
\begin{split}
&\sum\limits_{\Lambda_n^{(k)}\in\Y_{n}^{(k)}}W_{n}^{n+1}\left(\Theta_{n+1}^{(k)},\Lambda_n^{(k)};\theta\right)
W_{n-1}^{n}\left(\Lambda_n^{(k)},\tLambda_{n-1}^{(k)};\theta\right)=W_{n-1}^{n+1}\left(\Theta_{n+1}^{(k)},\tLambda_{n-1}^{(k)};\theta\right),
\end{split}
\end{equation}
for every $\Theta_{n+1}^{(k)}\in\Y_{n+1}^{(k)}$, and every $\tLambda_{n-1}^{(k)}\in\Y_{n-1}^{(k)}$. Equation
(\ref{PropertyK2}) (for general $n$ and $p$) can be derived using (\ref{EquationKK=K}). Finally,
equations (\ref{EquationKM=M})
and (\ref{EquationKK=K}) imply (\ref{PropertyK1}).
\end{proof}
\subsection{The integral representation of harmonic functions}
In this Section we obtain the integral representation (\ref{HarmonicFunctionProbabilityMeasure})
for a harmonic function $\varphi$ on $\Gamma_{\theta}\left(G\right)$. Let $\left(\mathcal{M}_n^{(k)}\right)_{n=1}^{\infty}$ be the coherent system associated with
a harmonic function $\varphi$ on $\Gamma_{\theta}(G)$.
It satisfies equation (\ref{PropertyK1}). The kernel $W_m^{n}\left(\Lambda_n^{(k)},\tLambda_m^{(k)};\theta\right)$ is closely related to the Martin kernel
$K_{\theta}\left(\tLambda_m^{(k)},\Lambda_n^{(k)}\right)$, namely
\begin{equation}
W_m^{n}\left(\Lambda_n^{(k)},\tLambda_m^{(k)};\theta\right)=\DIM_{\theta}\left(\tLambda_m^{(k)}\right)
K_{\theta}\left(\tLambda_m^{(k)},\Lambda_n^{(k)}\right),
\end{equation}
as it follows from equations (\ref{MartinKernelDefinition}) and (\ref{Coh1}). We use Proposition \ref{THEOREMASYMPTOTICSMARTINKERNEL} to obtain the asymptotics
\begin{equation}\label{AsymptoticsOfW}
\begin{split}
&W_m^{n}\left(\Lambda_n^{(k)},\tLambda_m^{(k)};\theta\right)\\
&=\DIM_{\theta}\left(\tLambda_m^{(k)}\right)
\prod\limits_{l=1}^k\frac{1}{d_l^{|\mu^{(l)}|}}P_{\mu^{(l)}}^{o}\left(\alpha\left(\Lambda_n^{(k)},n\right),\beta\left(\Lambda_n^{(k)},n\right),\delta\left(\Lambda_n^{(k)},n\right);\theta\right)+O\left(\frac{1}{\sqrt{n}}\right),
\end{split}
\end{equation}
as $n\rightarrow\infty$, and $m$ remains fixed.
Here $\tLambda_m^{(k)}=\left(\mu^{(1)},\ldots,\mu^{(k)}\right)$, $\Lambda_n^{(k)}=\left(\lambda^{(1)},\ldots,\lambda^{(k)}\right)$,
 the variables $\alpha\left(\Lambda_n^{(k)},n\right)$,
$\beta\left(\Lambda_n^{(k)},n\right)$, $\delta\left(\Lambda_n^{(k)},n\right)$ are defined by equations (\ref{5.5})-(\ref{5.8}). Since the bound of the reminder is uniform in $\Lambda_n^{(k)}$, we use equation (\ref{PropertyK1}) and get
\begin{equation}\label{CohApprN}
\begin{split}
&\mathcal{M}_m^{(k)}\left(\tLambda_m^{(k)}\right)=\DIM_{\theta}\left(\tLambda_m^{(k)}\right)
\\
&
\times\left[\sum\limits_{\Lambda_n^{(k)}\in\Y_n^{(k)}}
\left(\prod\limits_{l=1}^k\frac{1}{d_l^{|\mu^{(l)}|}}P_{\mu^{(l)}}^{o}\left(\alpha\left(\Lambda_n^{(k)},n\right),\beta\left(\Lambda_n^{(k)},n\right),\delta\left(\Lambda_n^{(k)},n\right);\theta\right)\mathcal{M}_n^{(k)}\left(\Lambda_n^{(k)}\right)\right)\right]+O\left(\frac{1}{\sqrt{n}}\right).
\end{split}
\end{equation}
For each $n=1,2,\ldots $ we associate with $\mathcal{M}_n^{(k)}$ a probability measure $P^{(n)}$
on $\Omega(G)$ using the formula
\begin{equation}\label{DEFINITIONPn}
P^{(n)}(d\omega)=\sum\limits_{\Lambda_n^{(k)}\in\Y_n^{(k)}}
\mathcal{M}_n^{(k)}(\Lambda_n^{(k)})\delta_{\left(\alpha(\Lambda_n^{(k)},n),\beta(\Lambda_n^{(k)},n),\delta(\Lambda_n^{(k)},n)\right)}(d\omega),
\end{equation}
where $\delta_{\omega}$ stands for the delta-measure at a point $\omega\in\Omega(G)$.
We can write
\begin{equation}\label{AppearenceOfInt}
\prod_{l=1}^kP_{\mu^{(l)}}^{o}\left(\alpha(\Lambda_n^{(k)},n),\beta(\Lambda_n^{(k)},n),\delta(\Lambda_n^{(k)},n);
\theta\right)=\int\limits_{\Omega(G)}\prod\limits_{l=1}^kP_{\mu^{(l)}}^{o}(\omega;\theta)
\delta_{\left(\alpha(\Lambda_n^{(k)},n),\beta(\Lambda_n^{(k)},n),\delta(\Lambda_n^{(k)},n)\right)}(d\omega).
\end{equation}
Inserting (\ref{AppearenceOfInt}) into (\ref{CohApprN}), and using (\ref{DEFINITIONPn})
we find
\begin{equation}
\begin{split}
\mathcal{M}_m^{(k)}\left(\tLambda_m^{(k)}\right)=\DIM_{\theta}(\tLambda_m^{(k)})\int\limits_{\Omega(G)}
\left[\prod\limits_{l=1}^k\frac{1}{d_l^{|\mu^{(l)}|}}
P_{\mu^{(l)}}^{o}\left(\omega;\theta\right)\right]P^{(n)}(d\omega)
+O\left(\frac{1}{\sqrt{n}}\right).
\end{split}
\end{equation}
Take $n\rightarrow\infty$ in the equation above.  Since
$\{P^{(n)}\}_{n=1}^{\infty}$ is a sequence  of the probability measures
on the compact and  metrizable space $\Omega(G)$, it has a weakly convergent subsequence.
Denote by $P$ the weak limit of this subsequence. This limit, $P$, is a probability measure
on $\Omega\left(G\right)$. We have
\begin{equation}
\begin{split}
\mathcal{M}_m^{(k)}\left(\tLambda_m^{(k)}\right)=\DIM_{\theta}(\tLambda_m^{(k)})\int\limits_{\Omega(G)}
\left[\prod\limits_{l=1}^k\frac{1}{d_l^{|\mu^{(l)}|}}
P_{\mu^{(l)}}^{o}\left(\omega;\theta\right)\right]P(d\omega),
\end{split}
\end{equation}
which is equivalent to formula (\ref{HarmonicFunctionProbabilityMeasure}).

To conclude we have proved that each harmonic function $\varphi$ on $\Gamma_{\theta}\left(G\right)$ can be represented as in equation (\ref{HarmonicFunctionProbabilityMeasure}). Let us show that for a given harmonic function $\varphi$ on $\Gamma_{\theta}\left(G\right)$ there is only one probability measure $P$ satisfying (\ref{HarmonicFunctionProbabilityMeasure}). Assume that
$$
\int\limits_{\Omega(G)}\mathbb{K}_{\theta}\left(\Lambda_n^{(k)},\omega\right)P(d\omega)
=\int\limits_{\Omega(G)}\mathbb{K}_{\theta}\left(\Lambda_n^{(k)},\omega\right)P'(d\omega)
$$
for two probability measures $P$ and $P'$ on $\Omega\left(G\right)$. From the results in Macdonald
\cite{Macdonald}, Appendix B it is not hard to conclude that the  elements  $\mathbb{P}_{\Lambda_n^{(k)}}$ of $\Sym(G)$ defined by
$\mathbb{P}_{\Lambda_n^{(k)}}=\prod\limits_{l=1}^kP_{\lambda^{(l)}}$, where $\Lambda_n^{(k)}=\left(\lambda^{(1)},\ldots,\lambda^{(k)}\right)\in\Y_n^{(k)}$, and $P_{\lambda^{(l)}}$ is the Jack polynomial parameterized by $\lambda^{(l)}$ span $\Sym\left(G\right)$. Then Proposition \ref{PROPOSITIONIMAGEDENSE} implies that  the functions
$\mathbf{F}_{\Lambda_n^{(k)}}:\;\Omega\left(G\right)\rightarrow\R$ defined by
$$
\mathbf{F}_{\Lambda_n^{(k)}}\left(\omega\right)=
\mathbb{K}_{\theta}\left(\Lambda_n^{(k)},\omega\right)
$$
span a dense subset in $C\left(\Omega\left(G\right)\right)$. This gives $P=P'$.
\subsection{Harmonic functions defined by probability measures on the generalized Thoma set}
Let $P$ be a probability measure on the generalized Thoma set $\Omega\left(G\right)$.
Set
\begin{equation}\label{DefMThroughP}
\mathcal{M}_n^{(k)}\left(\Lambda_n^{(k)}\right)=
\DIM_{\theta}\left(\Lambda_n^{(k)}\right)
\int\limits_{\Omega\left(G\right)}\mathbb{K}_{\theta}\left(\Lambda_n^{(k)},\omega\right)P\left(d\omega\right),
\end{equation}
where $\mathbb{K}_{\theta}\left(\Lambda_n^{(k)},\omega\right)$ is defined by equation (\ref{LimitingMartinKernel}).
To complete the proof of Theorem \ref{MAINTHEOREM} is is enough to show that the sequence
$\left(\mathcal{M}_n^{(k)}\right)_{n=1}^{\infty}$ is a coherent system of probability measures on $\Gamma_{\theta}\left(G\right)$.
Observe that equation (\ref{AsymptoticsOfW}) can be rewritten as
\begin{equation}\label{WA}
W_m^{n}\left(\Lambda_n^{(k)},\tLambda_m^{(k)};\theta\right)=\DIM_{\theta}\left(\tLambda_m^{(k)}\right)
\mathbb{K}_{\theta}\left(\tLambda_m^{(k)},\frac{1}{n}W_{\Lambda_n^{(k)}}\right)+O\left(\frac{1}{\sqrt{n}}\right),
\end{equation}
where $\frac{1}{n}W_{\Lambda_n^{(k)}}$ is an element of $\Omega\left(G\right)$ defined by equation (\ref{5.4}).
In addition,
\begin{equation}
\sum\limits_{\tLambda_m^{(k)}\in\Y_m^{(k)}}W_m^{n}\left(\Lambda_n^{(k)},\tLambda_m^{(k)};\theta\right)=1,
\end{equation}
as it follows from equation (\ref{DefinitionOfPropagator}), and from the very definition of the dimension function
$\DIM_{\theta}\left(\tLambda_m^{(k)},\Lambda_n^{(k)}\right)$ in Section \ref{SectionDimensionFunction}.
Since every point $\omega\in\Omega\left(G\right)$ can be approximated by points $\frac{1}{n}W_{\Lambda_n^{(k)}}$ with $\Lambda_n^{(k)}\in\Y_n^{(k)}$, we conclude that
\begin{equation}
\sum\limits_{\tLambda_m^{(k)}\in\Y_m^{(k)}}\DIM_{\theta}\left(\tLambda_m^{(k)}\right)\mathbb{K}\left(\tLambda_m^{(k)},\omega\right)=1
\end{equation}
for each $m=1,2,\ldots$, and each $\omega\in\Omega\left(G\right)$. Thus $\mathcal{M}_n^{(k)}$ defined by equation
(\ref{DefMThroughP}) is a probability measure on $\Y_n^{(k)}$ for each $n=1,2,\ldots$.

Now we will prove that $\left(\mathcal{M}_n^{(k)}\right)_{n=1}^{\infty}$ satisfies  condition (\ref{Coh1}). It follows from equations (\ref{PropertyK2}) and (\ref{WA}) that
\begin{equation}
\begin{split}
&\sum\limits_{\tLambda_m^{(k)}\in\Y_m^{(k)}}W_p^{m}\left(\tLambda_m^{(k)},\Theta_p^{(k)};\theta\right)
\DIM_{\theta}\left(\tLambda_m^{(k)}\right)\mathbb{K}_{\theta}\left(\tLambda_m^{(k)},\omega\right)=
\DIM_{\theta}\left(\Theta_p^{(k)}\right)\mathbb{K}_{\theta}\left(\Theta_p^{(k)},\omega\right),
\end{split}
\end{equation}
for each $\Theta_p^{(k)}\in\Y_p^{(k)}$. If $p=m-1$, then
$$
W_{m-1}^{m}\left(\tLambda_m^{(k)},\Theta_{m-1}^{(k)};\theta\right)=
\Upsilon_{\theta}\left(\Theta_{m-1}^{(k)},\tLambda_m^{(k)}\right)
\frac{\DIM_{\theta}\left(\Theta_{m-1}^{(k)}\right)}{\DIM_{\theta}\left(\tLambda_m^{(k)}\right)},
$$
and we obtain
\begin{equation}
\begin{split}
\sum\limits_{\tLambda_m^{(k)}\in\Y_m^{(k)}}\Upsilon_{\theta}\left(\Theta_{m-1}^{(k)},\tLambda_m^{(k)}\right)
\mathbb{K}_{\theta}\left(\tLambda_m^{(k)},\omega\right)=\mathbb{K}_{\theta}\left(\Theta_{m-1}^{(k)},\omega\right).
\end{split}
\end{equation}
The above equation implies that the sequence  $\left(\mathcal{M}_n^{(k)}\right)_{n=1}^{\infty}$ defined by equation
(\ref{DefMThroughP}) satisfies  condition (\ref{Coh1}). We conclude that $\left(\mathcal{M}_n^{(k)}\right)_{n=1}^{\infty}$
is a coherent system of probability measures on the branching graph $\Gamma_{\theta}\left(G\right)$.
The coherent systems on $\Gamma_{\theta}\left(G\right)$ are in one-to one correspondence with the harmonic functions on
$\Gamma_{\theta}\left(G\right)$ via equation (\ref{CoherentSystemsHarmonicFunctions}).
Thus each probability
measure $P$ on $\Omega\left(G\right)$ gives rise to a harmonic function on $\Gamma_{\theta}\left(G\right)$.
This completes the proof of Theorem \ref{MAINTHEOREM}. \qed
\section{Proof of Proposition \ref{PropCoRStructuresHarmonicFunctions} and Theorem \ref{THEOREMIBIJECTIVEMULTIPLEPARTITIONSTRUCTURES}}\label{SECTIONPROOFPROPOSITIONFIRSTTHEOREM}
\subsection{Conditional probabilities}
Assume that a sample of $n$ identical balls is partitioned into boxes of $k$ different types. We will use the index $l$ to identify a specific type of a box, so $l$ takes values from $1$ to $k$. Denote by $A_j^{(l)}$, where $1\leq j\leq n$, and $1\leq l\leq k$ the number of boxes of type $l$ containing exactly $j$ boxes.
Then each arrangement of $n$ identical balls into boxes of $k$ different types corresponds to $k$ Young diagrams
$$
\lambda^{(1)}=\left(1^{A_1^{(1)}}2^{A_2^{(1)}}\ldots n^{A_n^{(1)}}\right),\ldots,
\lambda^{(k)}=\left(1^{A_1^{(k)}}2^{A_2^{(k)}}\ldots n^{A_n^{(k)}}\right)
$$
such that
$$
\left|\lambda^{(1)}\right|+\ldots+\left|\lambda^{(k)}\right|=\sum\limits_{l=1}^k\sum\limits_{j=1}^njA_j^{(l)}=n.
$$
Set $\Lambda_n^{(k)}=\left(\lambda^{(1)},\ldots,\lambda^{(k)}\right)$. We have $\Lambda_n^{(k)}\in\Y_n^{(k)}$,
i.e. $\Lambda_n^{(k)}$ is a multiple partition of $n$ into $k$ components. Conversely, each multiple partition $\Lambda_n^{(k)}$ corresponds to a configuration of $n$ identical boxes partitioned into boxes of $k$ different types.

Below we consider $\Lambda_n^{(k)}$ as a random variable taking values in $\Y_n^{(k)}$. Let $\left(\mathcal{M}_n^{(k)}\right)_{n=1}^{\infty}$ be a multiple partition structure in the sense of Definition
\ref{DEFINITIONMULTIPLEPARTITIONSTRUCTURE}. Then the distribution of $\Lambda_n^{(k)}$ is defined by the probability measure $\mathcal{M}_n^{(k)}$ on $\Y_n^{(k)}$, namely
$$
\Prob\left(\Lambda_n^{(k)}=\Theta_n^{(k)}\right)=\mathcal{M}_n^{(k)}\left(\Theta_n^{(k)}\right),\;\;\forall \Theta_n^{(k)}\in\Y_n^{(k)}.
$$
Assume further that one ball chosen uniformly from the set of $n$ balls already situated in the boxes
is removed. Denote by $B_j^{(l)}$, where $1\leq j\leq n-1$, and $1\leq l\leq k$ the number of boxes of type $l$ containing exactly $j$ remaining balls. Then $\tLambda_{n-1}^{(k)}=\left(\mu^{(1)},\ldots,\mu^{(k)}\right)$, where $\mu^{(l)}=\left(1^{B_1^{(l)}}2^{B_2^{(l)}}\ldots (n-1)^{B_{n-1}^{(l)}}\right)$ is a multiple partition of $n-1$ into $k$ components such that $\tLambda_{n-1}^{(k)}\nearrow\Lambda_n^{(k)}$
(i.e. there exists $l$, $l\in\left\{1,\ldots,k\right\}$, such that $\mu^{(l)}\nearrow\lambda^{(l)}$, and $\mu^{(i)}=\lambda^{(i)}$ for $i\neq l$).

If a ball is removed from a box of type $l$ we denote  the initial number of balls in this box by $I^{(l)}$. Using this notation we write the conditional probability to get $\tLambda_{n-1}^{(k)}=\left(\mu^{(1)},\ldots,\mu^{(k)}\right)$ from
$\Lambda_n^{(k)}=\left(\lambda^{(1)},\ldots,\lambda^{(k)}\right)$
as
\begin{equation}\label{CPLM}
\Prob\left(\tLambda_{n-1}^{(k)}|\Lambda_n^{(k)}\right)=
\left\{
\begin{array}{ll}
\frac{1}{n}A_{I^{(1)}}^{(1)}I^{(1)}, & \mu^{(1)}\nearrow\lambda^{(1)},\\
\frac{1}{n}A_{I^{(2)}}^{(2)}I^{(2)}, & \mu^{(2)}\nearrow\lambda^{(2)},\\
\vdots, & \\
\frac{1}{n}A_{I^{(k)}}^{(k)}I^{(k)}, & \mu^{(k)}\nearrow\lambda^{(k)},\\
0, & \mbox{otherwise}. \\
  \end{array}
\right.
\end{equation}
Clearly, the system $\left(\mathcal{M}_n^{(k)}\right)_{n=1}^{\infty}$ is a multiple partition structure if and only if the consistency condition
\begin{equation}
\mathcal{M}_{n-1}^{(k)}\left(\tLambda_{n-1}^{(k)}\right)=
\sum\limits_{\Lambda_n^{(k)}\in\Y_n^{(k)}}
\Prob\left(\tLambda_{n-1}^{(k)}|\Lambda_n^{(k)}\right)\mathcal{M}_n^{(k)}(\Lambda_n^{(k)}),\;\;\forall \tLambda_{n-1}^{(k)}\in\Y_{n-1}^{(k)}
\end{equation}
is satisfied with $\Prob\left(\tLambda_{n-1}^{(k)}|\Lambda_n^{(k)}\right)$ defined by equation (\ref{CPLM}).  Finally note that due to the correspondence between balls configurations and multiple partitions described above we can write
$I^{(l)}=L^{(l)}$, where $L^{(l)}$ is the length of the row in $\lambda^{(l)}$ from which a box was removed to get $\mu^{(l)}$. Moreover, $A_{L^{(l)}}^{(l)}$ (the number of boxes
of type $l$ containing exactly $L^{(l)}$ balls) can be identified with $r_{L^{(l)}}\left(\lambda^{(l)}\right)$, which is the number of rows of size $L^{(l)}$ in $\lambda^{(l)}$. Thus we obtain
\begin{equation}\label{CPLM1}
\Prob\left(\tLambda_{n-1}^{(k)}|\Lambda_n^{(k)}\right)=
\left\{
\begin{array}{ll}
\frac{1}{n}r_{L^{(1)}}\left(\lambda^{(1)}\right)L^{(1)}, & \mu^{(1)}\nearrow\lambda^{(1)},\\
\frac{1}{n}r_{L^{(2)}}\left(\lambda^{(2)}\right)L^{(2)}, & \mu^{(2)}\nearrow\lambda^{(2)},\\
\vdots, & \\
\frac{1}{n}r_{L^{(k)}}\left(\lambda^{(k)}\right)L^{(k)}, & \mu^{(k)}\nearrow\lambda^{(k)},\\
0, & \mbox{otherwise}. \\
  \end{array}
\right.
\end{equation}
\subsection{Proof of Proposition \ref{PropCoRStructuresHarmonicFunctions}}
Consider the branching graph $\Gamma_{\theta}\left(G\right)$.
Let $\left(\mathcal{M}_n^{(k)}\right)_{n=1}^{\infty}$ be a coherent system of probability measures for this graph.
This system satisfies condition (\ref{Coh1}) with the multiplicity function $\Upsilon_{\theta}$ defined by equation (\ref{Upsilon}). We will show that
\begin{equation}\label{DIMCPROBABILITIES}
\Upsilon_{\theta=0}\left(\tLambda_{n-1}^{(k)},\Lambda_n^{(k)}\right)
\frac{\DIM_{\theta=0}\left(\tLambda_{n-1}^{(k)}\right)}{\DIM_{\theta=0}\left(\Lambda_n^{(k)}\right)}
=\Prob\left(\tLambda_{n-1}^{(k)}|\Lambda_n^{(k)}\right),
\end{equation}
where $\Prob\left(\tLambda_{n-1}^{(k)}|\Lambda_n^{(k)}\right)$ is the conditional probability for a multiple partition structure given by equation (\ref{CPLM1}). Clearly, equation (\ref{DIMCPROBABILITIES}) implies Proposition \ref{PropCoRStructuresHarmonicFunctions}.

To compute the left-hand side of equation (\ref{DIMCPROBABILITIES}) we use Proposition \ref{PROPOSITIONFORMULADIMENSIONFUNCTION}. Namely, formula (\ref{3.3})  gives
\begin{equation}
\DIM_{\theta=0}\left(\Lambda_n^{(k)}\right)=n!\prod\limits_{l=1}^k\frac{d_j^{\left|\lambda^{(l)}\right|}}{\left|\lambda^{(l)}\right|!}\dim_{\theta=0}\left(\lambda^{(l)}\right),
\end{equation}
where $\Lambda_n^{(k)}=\left(\lambda^{(1)},\ldots,\lambda^{(k)}\right)$; $d_1$, $\ldots$, $d_k$ are the dimensions of the irreducible representations of $G$, and $\dim_{\theta}\left(\lambda^{(l)}\right)$ denotes the dimension function of the Young graph with
the Jack edge multiplicities defined by the Jack edge multiplicity function $\chi_{\theta}$.
The crucial fact is that as $\theta$ becomes  zero the Young graph with the Jack edge multiplicities turns into the Kingman graph, see, for example, Kerov, Okounkov, and Olshanski
\cite{KerovOkounkovOlshanski}, Sections 4 and 6. By definition, the Kingman graph is the branching graph which can be obtained from the Young graph by assigning nontrivial multiplicities $\chi_{\Kingman}\left(\mu,\lambda\right)$ to the edges. These multiplicities are given by
\begin{equation}\label{KingmanMultiplicities}
\chi_{\Kingman}\left(\mu,\lambda\right)=
\left\{
\begin{array}{ll}
r_{L}\left(\lambda\right), & \mu\nearrow\lambda, \\
0, & \mbox{otherwise}.
\end{array}
\right.
\end{equation}
Here $L$ denotes the length of the row of $\lambda$ from which a box is extracted to get $\mu$, and $r_{L}\left(\lambda\right)$ is the number of rows of size $L$ in $\lambda$.
It is known (see Borodin and Olshanski \cite{BorodinOlshanskiHarmonicFunctions}, Section 4)
that the dimension function $\dim_{\Kingman}(\lambda)$ (i.e.the number of ways to get $\lambda$ from $\emptyset$)  of the Kingman graph is equal to
$$
\dim_{\Kingman}(\lambda)=\frac{|\lambda|!}{\lambda_1!\lambda_2!\ldots\lambda_{l(\lambda)}!},
$$
where $\lambda=\left(\lambda_1,\ldots,\lambda_{l(\lambda)}\right)$. Thus we obtain
\begin{equation}
\DIM_{\theta=0}\left(\Lambda_n^{(k)}\right)=n!\prod\limits_{l=1}^k
\frac{d_l^{|\lambda^{(l)}|}}{\lambda^{(l)}_1!\lambda^{(l)}_2!
\ldots\lambda^{(l)}_{l\left(\lambda^{(l)}\right)}!}.
\end{equation}
The formula just written above enables us to find the ratio of dimension functions in the left-hand
side of equation (\ref{DIMCPROBABILITIES}) explicitly,
\begin{equation}\label{RatioDimensions}
\frac{\DIM_{\theta=0}\left(\tLambda_{n-1}^{(k)}\right)}{\DIM_{\theta=0}\left(\Lambda_n^{(k)}\right)}
=\left\{
\begin{array}{ll}
\frac{L^{(1)}}{nd_1}, & \mu^{(1)}\nearrow\lambda^{(1)},\\
\frac{L^{(2)}}{nd_2}, & \mu^{(2)}\nearrow\lambda^{(2)},\\
\vdots, & \\
\frac{L^{(k)}}{nd_k}\, & \mu^{(k)}\nearrow\lambda^{(k)},\\
0, & \mbox{otherwise}, \\
\end{array}
\right.
\end{equation}
where $L^{(l)}$ denotes the length of the row in $\lambda^{(l)}$ from which a box was removed to get $\mu^{(l)}$.

Let us find a formula for $\Upsilon_{\theta=0}\left(\tLambda_{n-1}^{(k)},\Lambda_n^{(k)}\right)$.
Equation (\ref{Upsilon}) gives the multiplicity  $\Upsilon_{\theta}\left(\tLambda_{n-1}^{(k)},\Lambda_n^{(k)}\right)$
in terms of the multiplicities $\chi_{\theta}\left(\mu^{(l)},\lambda^{(l)}\right)$. As $\theta\rightarrow 0$, the multiplicities $\chi_{\theta}\left(\mu^{(l)},\lambda^{(l)}\right)$ turn into the multiplicities
$\chi_{\Kingman}\left(\mu^{(l)},\lambda^{(l)}\right)$ defined by equation (\ref{KingmanMultiplicities}).  We find
\begin{equation}\label{FormulaForUpsTHeta0}
\Upsilon_{\theta=0}\left(\tLambda_{n-1}^{(k)},\Lambda_n^{(k)}\right)=
\left\{
\begin{array}{ll}
d_1r_{L^{(1)}}\left(\lambda^{(1)}\right), & \mu^{(1)}\nearrow\lambda^{(1)},\\
d_2r_{L^{(2)}}\left(\lambda^{(2)}\right), & \mu^{(2)}\nearrow\lambda^{(2)},\\
\vdots, & \\
d_kr_{L^{(k)}}\left(\lambda^{(k)}\right), & \mu^{(k)}\nearrow\lambda^{(k)},\\
0, & \mbox{otherwise}. \\
\end{array}
\right.
\end{equation}
Equation (\ref{DIMCPROBABILITIES}) follows from equations (\ref{CPLM1}), (\ref{RatioDimensions}), and
(\ref{FormulaForUpsTHeta0}). Proposition \ref{PropCoRStructuresHarmonicFunctions} is proved. \qed
\subsection{Proof of Theorem \ref{THEOREMIBIJECTIVEMULTIPLEPARTITIONSTRUCTURES}}
Theorem \ref{THEOREMIBIJECTIVEMULTIPLEPARTITIONSTRUCTURES} is a corollary of
Theorem \ref{MAINTHEOREM}.
Namely, we apply Theorem \ref{MAINTHEOREM}, and take the limit $\theta\rightarrow 0$.  As $\theta\rightarrow 0$, the variables
$$
\left\{p_{r,l}^{o}(\alpha,\beta,\delta;\theta):\;r\geq 1,\; l=1,\ldots,k\right\}
$$
defined by equation (\ref{pnol})
turn into the extended power symmetric functions $p_{r,l}^{o}(\alpha,\delta)$ defined by
$$
p_{r,l}^{o}(\alpha,\delta)=
\left\{
 \begin{array}{ll}
\sum\limits_{i=1}^{\infty}\left(\alpha_i^{(l)}\right)^r, & r=2,3,\ldots,\\
\delta^{(l)}, & r=1,
\end{array}
\right.
$$
and each $P_{\lambda^{(l)}}^{o}(\alpha,\beta,\delta;\theta)$ turns into the extended monomial function
$M_{\lambda^{(l)}}\left(\alpha^{(l)},\delta^{(l)}\right)$ defined by equation (\ref{MEXTENDEDT}). Moreover, the set $\Omega(G)$ is replaced by $\overline{\nabla}^{(k)}$
defined by (\ref{SetNabla}).
In this way we obtain the integral representation (\ref{MPSCorrespondence}) for a multiple partition structure.
\qed
\section{Proof of Proposition \ref{PropositionEwensWreathProduct}}\label{ProofEwensProbMeasure}
If $G$ contains only the identity element, then $k=1$, $|G|=1$, and expression (\ref{EwensMeasureWreathProduct}) turns into
\begin{equation}
P_{t_1,n}^{\Ewens}(x)=\frac{t_1^{[x]}}{\left(t_1\right)_n},\;\;x\in S(n),\;\; t_1>0.
\end{equation}
Here $[x]$ denotes the number of cycles in $x\in S(n)$, and $\left(t_1\right)_n=
t_1\left(t_1+1\right)\ldots\left(t_1+n-1\right)$ is the Pochhammer symbol.
This measure is a probability measure on the symmetric group $S(n)$, see, for example, Olshanski \cite{OlshanskiRandomPermutations}. Since $P_{t_1,n}^{\Ewens}$ is invariant under the action of $S(n)$ on itself by conjugations it gives rise to the probability measure
$M_{t_1,n}^{\Ewens}$ on the set $\Y_n$ of Young diagrams with $n$ boxes,
\begin{equation}
M_{t_1,n}^{\Ewens}\left(\lambda\right)=\frac{n!}{1^{r_1(\lambda)}r_1(\lambda)!
2^{r_2(\lambda)}r_2(\lambda)!\ldots n^{r_n(\lambda)}r_n(\lambda)!}
\frac{t_1^{l(\lambda)}}{\left(t_1\right)_n},\;\;\lambda\in\Y_n,
\end{equation}
where $r_j(\lambda)$ denotes the number of rows of length $j$ in the Young diagram $\lambda$.

Consider the general case of an arbitrary finite group $G$ with $k$ conjugacy classes.
Since the map $x\rightarrow t_1^{[x]_{c_1}}t_2^{[x]_{c_2}}\ldots t_k^{[x]_{c_k}}$ defines a  function
which is constant on the conjugacy classes of $G\sim S(n)$, the sum
\begin{equation}
\sum\limits_{x\in G\sim S(n)}t_1^{[x]_{c_1}}t_2^{[x]_{c_2}}\ldots t_k^{[x]_{c_k}}
\end{equation}
 can be rewritten as
\begin{equation}\label{SumGEwans}
\sum\limits_{\Lambda_n^{(k)}=\left(\lambda^{(1)},\ldots,\lambda^{(k)}\right)\in\Y_n^{(k)}}
\left|K_{\Lambda_n^{(k)}}\right|
t_1^{l\left(\lambda^{(1)}\right)}
t_2^{l\left(\lambda^{(2)}\right)}
\ldots
t_k^{l\left(\lambda^{(k)}\right)},
\end{equation}
where $\left|K_{\Lambda_n^{(k)}}\right|$ denotes the number of elements of $G\sim S(n)$ in the conjugacy class parameterized by $\Lambda_n^{(k)}=\left(\lambda^{(1)},\ldots,\lambda^{(k)}\right)$.
This number is given by equation (\ref{TheSizeOfTheConjugacyClass}).

Inserting the expression
for $\left|K_{\Lambda_n^{(k)}}\right|$ into the sum in (\ref{SumGEwans}) we rewrite
 (\ref{SumGEwans}) as
 \begin{equation}
 n!\left|G\right|^n
 \underset{|\lambda^{(1)}|+\ldots+|\lambda^{(k)}|=n}{\sum\limits_{\lambda^{(1)},\ldots,\lambda^{(k)}}}
 \frac{\left(\frac{t_1}{\zeta_1}\right)_{|\lambda^{(1)}|}\ldots \left(\frac{t_k}{\zeta_k}\right)_{|\lambda^{(k)}|}
 }{|\lambda^{(1)}|!\ldots |\lambda^{(k)}|!}
 M^{\Ewens}_{\frac{t_1}{\zeta_1},\;|\lambda^{(1)}|}\left(\lambda^{(1)}\right)
 \ldots M^{\Ewens}_{\frac{t_k}{\zeta_k},\;|\lambda^{(k)}|}\left(\lambda^{(k)}\right).
 \end{equation}
  Since\footnote{Formulae similar to (\ref{ProbSum}) arise in the context of the multidimensional hypergeometric distribution. For a description of the relation of (\ref{ProbSum}) to the multidimensional hypergeometric distribution, and for a probabilistic proof of equation (\ref{ProbSum})
  we refer the reader to the book by Shiryaev \cite{Shiryaev}, \S 2, Section 3.}
 \begin{equation}\label{ProbSum}
n! \sum\limits_{m_1+\ldots+m_k=n}\frac{\left(\frac{t_1}{\zeta_1}\right)_{m_1}\ldots \left(\frac{t_k}{\zeta_k}\right)_{m_k}
 }{m_1!\ldots m_k!}=\left(\frac{t_1}{\zeta_{c_1}}+\ldots+\frac{t_k}{\zeta_{c_k}}\right)_n,
 \end{equation}
 Proposition \ref{PropositionEwensWreathProduct} holds true.\qed

\section{Construction of harmonic functions on $\Gamma_{\theta}(G)$ using harmonic functions on the Jack branching graph. Proof of Proposition \ref{PropositionEwensMultiplePartitionStructure}}\label{SectionProofMewensMultiplePartitionStructure}
In this Section we show how a harmonic function on $\Gamma_{\theta}\left(G\right)$ can be constructed from harmonic functions on the Jack branching graph, see Proposition \ref{PropositionGenralHarmonic}. Corollary \ref{CorollaryConstructionCoherentMeasures} from Proposition \ref{PropositionGenralHarmonic} is used to prove
Proposition \ref{PropositionEwensMultiplePartitionStructure}.

Let $\Y_{\theta}$ be the Jack branching graph, i.e. the Young graph with the Jack edge multiplicities $\chi_{\theta}(\mu,\lambda)$ defined by equation (\ref{TheJackMultiplicityFunction}). A function $\psi:\Y_{\theta}\rightarrow\R_{\geq 0}$  defined on the set of vertices in $\Y_{\theta}$ is called harmonic on the Jack branching graph\footnote{The theory of harmonic functions on the Jack branching graph is presented in Kerov, Okounkov, and Olshanski \cite{KerovOkounkovOlshanski}} if the condition
\begin{equation}
\psi\left(\mu\right)=\sum\limits_{\lambda: \lambda\searrow\mu}\chi_{\theta}(\mu,\lambda)\psi(\lambda)
\end{equation}
is satisfied. We assume that $\psi$ is normalized by the condition $\psi(\emptyset)=1$.
\begin{prop}\label{PropositionGenralHarmonic}Let $\varphi_1$, $\ldots$, $\varphi_k$ be harmonic functions on the Jack branching graph,
and let $\tau_1$, $\ldots$, $\tau_k$ be strictly positive real numbers. Then the function
$\varphi:\;\Y_n^{(k)}\rightarrow\R$ defined by
\begin{equation}\label{GeneralHarmonicFunction}
\varphi\left(\Lambda_n^{(k)}\right)=\frac{\left(\tau_1\right)_{|\lambda^{(1)}|}\ldots
\left(\tau_k\right)_{|\lambda^{(k)}|}}{\left(\tau_1+\ldots+\tau_k\right)_n}
\frac{\varphi_1\left(\lambda^{(1)}\right)\ldots\varphi_k\left(\lambda^{(k)}\right)}{\prod_{j=1}^k\left(d_j\right)^{|\lambda^{(j)}|}}
\end{equation}
(where $\Lambda_n^{(k)}=\left(\lambda^{(1)},\ldots,\lambda^{(k)}\right)$, and $d_1$, $\ldots$, $d_k$ are the dimensions of the irreducible representations of $G$) is harmonic on $\Gamma_{\theta}(G)$.
\end{prop}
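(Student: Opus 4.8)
The plan is to verify directly that the function $\varphi$ defined by (\ref{GeneralHarmonicFunction}) satisfies the two requirements of Definition \ref{DEFINITIONHARMONICFUNCTIONS}: nonnegativity and the normalization $\varphi(\Lambda_0^{(k)})=1$, together with the harmonicity relation (\ref{EqDefHarFun}). Nonnegativity is immediate, since $\tau_1,\ldots,\tau_k>0$, the dimensions $d_j$ are positive integers, Pochhammer symbols of positive arguments are positive, and each $\varphi_j$ is nonnegative by assumption. The normalization is also immediate: taking every $\lambda^{(j)}=\emptyset$ gives $(\tau_j)_0=(\tau_1+\cdots+\tau_k)_0=1$ and $\varphi_j(\emptyset)=1$, so $\varphi(\Lambda_0^{(k)})=1$.

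For the harmonicity relation I would fix $\Lambda_{n-1}^{(k)}=(\mu^{(1)},\ldots,\mu^{(k)})$ and set $m_j=|\mu^{(j)}|$, so that $m_1+\cdots+m_k=n-1$. The sum on the right of (\ref{EqDefHarFun}) runs over all $\widetilde{\Lambda}_n^{(k)}$ with $\Lambda_{n-1}^{(k)}\nearrow\widetilde{\Lambda}_n^{(k)}$; split it according to which component $l\in\{1,\ldots,k\}$ receives the added box, so that $\widetilde{\Lambda}_n^{(k)}$ has components $\mu^{(1)},\ldots,\lambda^{(l)},\ldots,\mu^{(k)}$ with $\lambda^{(l)}\searrow\mu^{(l)}$ and all other components frozen. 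By the explicit formula (\ref{Upsilon}), the multiplicity $\Upsilon_{\theta}(\Lambda_{n-1}^{(k)},\widetilde{\Lambda}_n^{(k)})$ for such a vertex equals $d_l\,\chi_{\theta}(\mu^{(l)},\lambda^{(l)})$. Substituting (\ref{GeneralHarmonicFunction}), the prefactor $d_l$ cancels exactly one power of $d_l$ from the denominator $\prod_j d_j^{|\lambda^{(j)}|}$, and the inner sum over $\lambda^{(l)}$ reduces, by harmonicity of $\varphi_l$ on the Jack branching graph $\Y_{\theta}$, to
\[
\sum_{\lambda^{(l)}\searrow\mu^{(l)}}\chi_{\theta}(\mu^{(l)},\lambda^{(l)})\,\varphi_l(\lambda^{(l)})=\varphi_l(\mu^{(l)}).
\]
Using $(\tau_l)_{m_l+1}=(\tau_l)_{m_l}(\tau_l+m_l)$, the entire right-hand side of (\ref{EqDefHarFun}) collapses to
\[
\frac{(\tau_1)_{m_1}\cdots(\tau_k)_{m_k}}{(\tau_1+\cdots+\tau_k)_n}\cdot\frac{\varphi_1(\mu^{(1)})\cdots\varphi_k(\mu^{(k)})}{\prod_{j=1}^k d_j^{m_j}}\sum_{l=1}^k(\tau_l+m_l).
\]

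Finally I would invoke $\sum_{l=1}^k(\tau_l+m_l)=(\tau_1+\cdots+\tau_k)+(n-1)$ together with the factorization $(\tau_1+\cdots+\tau_k)_n=(\tau_1+\cdots+\tau_k)_{n-1}\,(\tau_1+\cdots+\tau_k+n-1)$; the common factor $(\tau_1+\cdots+\tau_k+n-1)$ cancels, leaving exactly $\varphi(\Lambda_{n-1}^{(k)})$, which establishes (\ref{EqDefHarFun}). I do not expect a genuine obstacle: the argument is a bookkeeping computation with Pochhammer symbols. The only points that deserve attention are reading (\ref{Upsilon}) correctly so that the powers of $d_l$ line up after substitution, and observing that the inner sum runs over $\lambda^{(l)}$ alone with the remaining components held fixed — which is precisely what makes the one-variable harmonicity of each $\varphi_j$ applicable.
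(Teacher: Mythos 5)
Your proposal is correct and follows essentially the same route as the paper's own proof: split the sum over $\widetilde{\Lambda}_n^{(k)}$ according to which component receives the added box, use formula (\ref{Upsilon}) so that the factor $d_l$ cancels the extra power of $d_l$, apply the one-variable harmonicity of each $\varphi_l$ with the Jack multiplicities $\chi_{\theta}$, and finish with the Pochhammer identities $(\tau_l)_{m_l+1}=(\tau_l)_{m_l}(\tau_l+m_l)$ and $\sum_l(\tau_l+m_l)=\tau_1+\cdots+\tau_k+n-1$. The only difference is cosmetic: you also record the trivial checks of nonnegativity and the normalization $\varphi(\Lambda_0^{(k)})=1$, which the paper leaves implicit.
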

\begin{proof} We need to show that $\varphi$ defined by (\ref{GeneralHarmonicFunction}) satisfies the condition
\begin{equation}\label{GHCondition}
\varphi\left(\tLambda_{n-1}^{(k)}\right)=\sum\limits_{\Lambda_n^{(k)}\in\Y_n^{(k)}}\Upsilon_{\theta}(\tLambda_{n-1}^{(k)},\Lambda_n^{(k)})\varphi\left(\Lambda_n^{(k)}\right),
\end{equation}
for each $\tLambda_{n-1}^{(k)}\in\Y_{n-1}^{(k)}$. Here $\Upsilon_{\theta}$ is defined by equation (\ref{Upsilon}).
If $\tLambda_{n-1}^{(k)}=\left(\mu^{(1)},\ldots,\mu^{(k)}\right)$, then
 the right-hand side of the equation above  can be written as
\begin{equation}\label{ExpressionHarmonicFunctions}
\sum\limits_{j=1}^kd_j\underset{\lambda^{(i)}=\mu^{(i)},\; i\neq j}{\sum\limits_{\Lambda_n^{(k)}:\;\lambda^{(j)}\searrow\mu^{(j)}}}\chi_{\theta}\left(\mu^{(j)},\lambda^{(j)}\right)
\frac{\left(\tau_1\right)_{|\lambda^{(1)}|}\ldots
\left(\tau_k\right)_{|\lambda^{(k)}|}}{\left(\tau_1+\ldots+\tau_k\right)_n}
\frac{\varphi_1\left(\lambda^{(1)}\right)\ldots\varphi_k\left(\lambda^{(k)}\right)}{\prod_{j=1}^k\left(d_j\right)^{|\lambda^{(j)}|}}
\end{equation}
This expression can be  rewritten further as
\begin{equation}
\begin{split}
&\frac{1}{\left(\tau_1+\ldots+\tau_k\right)_n}
\frac{\varphi_1\left(\mu^{(1)}\right)\ldots\varphi_k\left(\mu^{(k)}\right)}{\prod_{j=1}^k\left(d_j\right)^{|\mu^{(j)}|}}
\\
&\times\sum\limits_{j=1}^k
\left(\tau_1\right)_{|\mu^{(1)}|}
\ldots
\left(\tau_{j-1}\right)_{|\mu^{(j-1)}|}
\left(\tau_{j}\right)_{|\mu^{(j)}|+1}
\left(\tau_{j+1}\right)_{|\mu^{(j+1)}|}\ldots
\left(\tau_{k}\right)_{|\mu^{(k)}|},
\end{split}
\end{equation}
where we have used the fact that each of the functions $\varphi_1$, $\ldots$, $\varphi_k$ is harmonic on the Jack graph $\Y_{\theta}$ with the multiplicity $\chi_{\theta}$. Since
$$
\left(\tau_{j}\right)_{|\mu^{(j)}|+1}=\left(\tau_j\right)_{|\mu^{(j)}|}\left(\tau_j+|\mu^{(j)}|\right),
$$
we see that the right-hand side of (\ref{GHCondition}) is equal to $\varphi\left(\tLambda_{n-1}^{(k)}\right)$.
\end{proof}
Let $\psi$ be a harmonic function on $\Y_{\theta}$. A sequence $\left(M_n\right)_{n=1}^{\infty}$
defined by $M_n(\lambda)=\dim_{\theta}(\lambda)\psi(\lambda)$ (where $\dim_{\theta}$ is the dimension function
on $\Y_{\theta}$)  is called a coherent system of measures on $\Y_{\theta}$.
\begin{cor}\label{CorollaryConstructionCoherentMeasures} Let $\left(M_n^{(1)}\right)_{n=1}^{\infty}$, $\ldots$, $\left(M_{n}^{(k)}\right)_{n=1}^{\infty}$
be $k$ sequences of coherent probability measures on the Jack branching graph $\Y_{\theta}$. Then the sequence
$\left(\mathcal{M}_n\right)_{n=1}^{\infty}$ defined by
\begin{equation}\label{GeneralCoherentMeasures}
\mathcal{M}_n(\Lambda_n^{(k)})=\frac{\left(\tau_1\right)_{|\lambda^{(1)}|}\ldots
\left(\tau_k\right)_{|\lambda^{(k)}|}}{\left(\tau_1+\ldots+\tau_k\right)_n}
\frac{n!}{|\lambda^{(1)}|!\ldots |\lambda^{(k)}|!}
M^{(1)}_{|\lambda^{(1)}|}\left(\lambda^{(1)}\right)\ldots M^{(k)}_{|\lambda^{(k)}|}\left(\lambda^{(k)}\right)
\end{equation}
is a coherent  system of probability measures on $\Gamma_{\theta}(G)$.
\end{cor}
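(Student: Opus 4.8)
The plan is to derive Corollary~\ref{CorollaryConstructionCoherentMeasures} from Proposition~\ref{PropositionGenralHarmonic} by the standard dictionary between coherent systems of measures and harmonic functions. First I would recall that each sequence $\left(M_n^{(j)}\right)_{n=1}^{\infty}$ of coherent probability measures on the Jack branching graph $\Y_{\theta}$ arises from a unique normalized harmonic function $\varphi_j$ on $\Y_{\theta}$ via $M_n^{(j)}(\lambda)=\dim_{\theta}(\lambda)\,\varphi_j(\lambda)$; this is the definition adopted just before the statement, together with the fact that $\dim_{\theta}(\lambda)>0$ for $\theta\geq 0$, which lets one invert the relation and recover $\varphi_j(\lambda)=M_n^{(j)}(\lambda)/\dim_{\theta}(\lambda)$.

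Next I would feed these $\varphi_1,\ldots,\varphi_k$, together with positive parameters $\tau_1,\ldots,\tau_k$, into Proposition~\ref{PropositionGenralHarmonic}, which produces a harmonic function $\varphi$ on $\Gamma_{\theta}(G)$ by formula~(\ref{GeneralHarmonicFunction}). By the general correspondence recorded in Section~\ref{SubSubSectionHarmonicFunctions} (and verified in Section~\ref{Section7.3}), the associated coherent system is
\begin{equation}
\mathcal{M}_n(\Lambda_n^{(k)})=\DIM_{\theta}\left(\Lambda_n^{(k)}\right)\varphi\left(\Lambda_n^{(k)}\right).
\nonumber
\end{equation}
It remains to compute the right-hand side explicitly and check it equals~(\ref{GeneralCoherentMeasures}). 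For this I would substitute the formula for $\DIM_{\theta}\left(\Lambda_n^{(k)}\right)$ given by Proposition~\ref{PROPOSITIONFORMULADIMENSIONFUNCTION} with $\widetilde{\Lambda}_0^{(k)}=(\emptyset,\ldots,\emptyset)$, namely
\begin{equation}
\DIM_{\theta}\left(\Lambda_n^{(k)}\right)=n!\prod\limits_{l=1}^k\frac{d_l^{|\lambda^{(l)}|}}{|\lambda^{(l)}|!}\dim_{\theta}\left(\lambda^{(l)}\right),
\nonumber
\end{equation}
and the formula~(\ref{GeneralHarmonicFunction}) for $\varphi$. Multiplying, the factors $d_l^{|\lambda^{(l)}|}$ cancel against $\prod_j(d_j)^{-|\lambda^{(j)}|}$, the factors $\dim_{\theta}(\lambda^{(l)})$ combine with $\varphi_l(\lambda^{(l)})$ to give $M_{|\lambda^{(l)}|}^{(l)}(\lambda^{(l)})$, and one is left with precisely~(\ref{GeneralCoherentMeasures}). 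Finally, since each $M_n^{(j)}$ is a probability measure, Proposition~\ref{PropositionGenralHarmonic} guarantees $\varphi$ is a bona fide normalized harmonic function, so the resulting $\left(\mathcal{M}_n\right)_{n=1}^{\infty}$ is automatically a coherent system of \emph{probability} measures on $\Gamma_{\theta}(G)$ (the total mass argument from Section~\ref{Section7.3} applies).

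The only genuine content is Proposition~\ref{PropositionGenralHarmonic}, which is already proved; the rest of Corollary~\ref{CorollaryConstructionCoherentMeasures} is bookkeeping. Consequently I expect no real obstacle — the one point requiring mild care is the positivity of $\dim_{\theta}$ and of the Pochhammer prefactors, which is needed to know the inversion $\varphi_j=M^{(j)}/\dim_{\theta}$ is well defined and nonnegative, and this follows from $\theta\geq 0$ together with $\tau_j>0$.
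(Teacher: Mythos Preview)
Your proposal is correct and follows essentially the same route as the paper: invoke Proposition~\ref{PropositionGenralHarmonic} to pass from the harmonic functions $\varphi_j$ underlying the coherent systems $(M_n^{(j)})$ to a harmonic function $\varphi$ on $\Gamma_\theta(G)$, then multiply by the dimension formula $\DIM_{\theta}(\Lambda_n^{(k)})=n!\prod_l d_l^{|\lambda^{(l)}|}\dim_{\theta}(\lambda^{(l)})/|\lambda^{(l)}|!$ from Proposition~\ref{PROPOSITIONFORMULADIMENSIONFUNCTION} and check the cancellations. The paper records this in one sentence; you have simply spelled out the bookkeeping.
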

\begin{proof}
Corollary \ref{CorollaryConstructionCoherentMeasures} follows immediately from Proposition \ref{PropositionGenralHarmonic}, and from the formula
\begin{equation}
\DIM_{\theta}(\Lambda_n^{(k)})
=n!\prod_{j=1}^k
\frac{d_j^{|\lambda^{(j)}|}}{|\lambda^{(j)}|!}
\dim_{\theta}\left(\lambda^{(j)}\right)
\end{equation}
for the number of ways to get $\Lambda_n^{(k)}$ from the empty diagram on the branching graph $\Gamma_{\theta}(G)$, see Proposition \ref{PROPOSITIONFORMULADIMENSIONFUNCTION}.
\end{proof}
\textit{Proof of Proposition \ref{PropositionEwensMultiplePartitionStructure}}. Write $\mathcal{M}_{t_1,\ldots,t_k;\;n}^{\Ewens}$ as in equation (\ref{MEUSE}). It is known that $\left(\mathcal{M}_{t,n}^{\Ewens}\right)_{n=1}^{\infty}$ is a coherent system of probability measures on the Kingman graph, i.e. on the Jack branching graph with the Jack parameter equal to zero. Corollary \ref{CorollaryConstructionCoherentMeasures}
implies that $\left(\mathcal{M}_{t_1,\ldots,t_k;n}^{\Ewens}\right)_{n=1}^{\infty}$ is a coherent system of probability measures on $\Gamma_{\theta=0}(G)$. By Proposition  \ref{PropCoRStructuresHarmonicFunctions} it is a multiple partition structure.
Proposition \ref{PropositionEwensMultiplePartitionStructure} is proved. \qed

\section{Proof of Theorem \ref{THEOREMREPRESENTATIONWITHMULTIPLEPDD}}\label{SectionProofREwensRepresentation}
By Theorem \ref{THEOREMIBIJECTIVEMULTIPLEPARTITIONSTRUCTURES} the measure
$\mathcal{M}^{\Ewens}_{t_1,\ldots,t_k;\;n}$ can be represented as
\begin{equation}\label{MEIntFIRST}
\begin{split}
&\mathcal{M}^{\Ewens}_{t_1,\ldots,t_k;\;n}\left(\Lambda_n^{(k)}\right)
=\frac{n!}{\prod\limits_{j=1}^k |\lambda^{(j)}_1|!\ldots|\lambda^{(j)}_{l(\lambda^{(j)})}|!}\\
&\times\int\limits_{\overline{\nabla}^{(k)}}
M_{\lambda^{(1)}}(\tilde{x}^{(1)},\tilde{\delta}^{(1)})\ldots M_{\lambda^{(k)}}(\tilde{x}^{(k)},\tilde{\delta}^{(k)})
P\left(d\tilde{x}^{(1)},\ldots d\tilde{x}^{(k)},
d\tilde{\delta}^{(1)},\ldots d\tilde{\delta}^{(k)}\right),
\end{split}
\end{equation}
where $\Lambda_n^{(k)}=\left(\lambda^{(1)},\ldots,\lambda^{(k)}\right)\in\Y_n^{(k)}$,
$M_{\lambda^{(l)}}\left(\tilde{x}^{(l)},\tilde{\delta^{(l)}}\right)$ are the extended monomial symmetric functions defined by equation (\ref{MEXTENDEDT}),
the set $\overline{\nabla}^{(k)}$ is defined by equation (\ref{SetNabla}), and $P$ is a unique probability measure on
$\overline{\nabla}^{(k)}$ satisfying (\ref{MEIntFIRST}). Our aim is to describe this measure explicitly.

The Kingman representation formula (equation (\ref{KingmanRepresentationFormula})), equation (\ref{MEUSE}), and equation
(\ref{MEIntFIRST})
imply that the following condition should be satisfied
\begin{equation}\label{ComparisonOfMeasuresPDP}
\begin{split}
&\frac{\left(T_1\right)_{|\lambda^{(1)}|}\ldots\left(T_k\right)_{|\lambda^{(k)}|}}{
\left(T_1+\ldots+T_k\right)_n}
\int\limits_{\overline{\nabla}_0^{(1)}}\ldots\int\limits_{\overline{\nabla}_0^{(1)}}
m_{\lambda^{(1)}}\left(x^{(1)}\right)\ldots m_{\lambda^{(k)}}\left(x^{(k)}\right)
PD(T_1)\left(dx^{(1)}\right)\ldots
PD(T_k)\left(dx^{(k)}\right)
\\
&=\int\limits_{\overline{\nabla}^{(k)}}
M_{\lambda^{(1)}}(\tilde{x}^{(1)},\tilde{\delta}^{(1)})\ldots M_{\lambda^{(k)}}(\tilde{x}^{(k)},\tilde{\delta}^{(k)})
P\left(d\tilde{x}^{(1)},\ldots d\tilde{x}^{(k)},
d\tilde{\delta}^{(1)},\ldots d\tilde{\delta}^{(k)}\right),
\end{split}
\end{equation}
where the set $\overline{\nabla}_0^{(1)}$ is defined by equation (\ref{nablazero}). Note that the prefactor before the integral in the left-hand side of equation (\ref{ComparisonOfMeasuresPDP})
can be written as
\begin{equation}
\frac{\Gamma\left(T_1+\ldots+T_k\right)}{\Gamma\left(T_1\right)\ldots\Gamma\left(T_k\right)}
\underset{\delta_1+\ldots+\delta_k=1}{\underset{\delta_1\geq 0,\ldots, \delta_k\geq 0}{\int\ldots\int}}
\delta_1^{T_1+|\lambda^{(1)}|-1}\delta_2^{T_2+|\lambda^{(2)}|-1}\ldots
\delta_k^{T_k+|\lambda^{(k)}|-1}d\delta_1\ldots d\delta_{k-1}.
\end{equation}
The monomial symmetric functions are homogeneous, in particular
$$
\delta_l^{|\lambda^{(l)}|}m_{\lambda^{(l)}}\left(x^{(l)}\right)
=m_{\lambda^{(l)}}\left(\delta_lx^{(l)}\right),\;\; 1\leq l\leq k.
$$
We obtain that   condition (\ref{ComparisonOfMeasuresPDP}) can be rewritten as
\begin{equation}\label{ImpRelM}
\begin{split}
&\int\limits_{\overline{\nabla}_0^{(1)}}\ldots\int\limits_{\overline{\nabla}_0^{(1)}}
\underset{\delta_1+\ldots+\delta_k=1}{\underset{\delta_1\geq 0,\ldots, \delta_k>0}{\int\ldots\int}}
\left(\prod\limits_{l=1}^k
m_{\lambda^{(l)}}\left(\delta_lx^{(l)}\right)PD(T_l)\left(dx^{(l)}\right)\right)
D(T_1,\ldots,T_k)\left(d\delta^{(1)},\ldots, d\delta^{(k)}\right)\\
&=\int\limits_{\overline{\nabla}^{(k)}}
M_{\lambda^{(1)}}(\tilde{x}^{(1)},\tilde{\delta}^{(1)})\ldots M_{\lambda^{(k)}}(\tilde{x}^{(k)},\tilde{\delta}^{(k)})
P\left(d\tilde{x}^{(1)},\ldots d\tilde{x}^{(k)},
d\tilde{\delta}^{(1)},\ldots d\tilde{\delta}^{(k)}\right),
\end{split}
\end{equation}
where $D\left(T_1,\ldots,T_k\right)$  is the Dirichlet distribution defined by its density
(\ref{PoissonDirichletDensity}), and $P$ is  a unique probability measure on $\overline{\nabla}^{(k)}$
satisfying (\ref{ImpRelM}). Assume that the distribution $P$ is such that
\begin{equation}
\tilde{x}^{(l)}=\delta^{(l)}x^{(l)},\;\; l=1,\ldots,k
\end{equation}
in distribution, and
\begin{equation}
\tilde{\delta}^{(l)}=\delta^{(l)},\;\; l=1,\ldots,k
\end{equation}
in distribution, where $x^{(1)}$, $\ldots$, $x^{(k)}$ are independent with distributions
$PD(T_1)$, $\ldots$, $PD(T_k)$ respectively, the joint distribution of $\delta^{(1)}$, $\ldots$, $\delta^{(k)}$ is $D(T_1,\ldots,T_k)$, and each $\delta^{(l)}$, $l=1,\ldots,k$ is independent from
$x^{(1)}$, $\ldots$, $x^{(k)}$. In other words, assume that the joint distribution of $\tilde{x}^{(l)}$, $\delta^{(l)}$ is the multiple Poisson-Dirichlet distribution $PD(T_1,\ldots,T_k)$ as in the statement of Theorem \ref{THEOREMREPRESENTATIONWITHMULTIPLEPDD}.
Then
$$
\sum\limits_{i=1}^{\infty}\tilde{x}_i^{(l)}=\tilde{\delta}^{(l)},\;\; l=1,\ldots, k,
$$
the distribution $P=PD(T_1,\ldots,T_k)$ is concentrated on $\overline{\nabla}_0^{(k)}$,
$$
M_{\lambda^{(l)}}\left(x^{(l)},\delta^{(l)}\right)=
m_{\lambda^{(l)}}\left(\delta^{(l)}x^{(l)}\right),\;\; l=1,\ldots,k,
$$
and  condition (\ref{ImpRelM}) is satisfied. Theorem \ref{THEOREMREPRESENTATIONWITHMULTIPLEPDD} is proved. \qed

\end{document}